\documentclass[11pt]{amsart}
\usepackage{amsthm,amsmath,amssymb,xypic,color,mathtools,xspace}
\usepackage{enumerate}
\usepackage{tikz}
\usepackage{wrapfig}
\usetikzlibrary{calc,matrix,arrows,shapes,decorations.pathmorphing,decorations.markings,decorations.pathreplacing,patterns}


\setlength{\textwidth}{5.7in}
\setlength{\oddsidemargin}{.4in}
\setlength{\evensidemargin}{.4in}
\setcounter{tocdepth}{1}

\numberwithin{equation}{section}

\newcommand{\be}{\beta}

\newcommand{\twd}{twisted differential\xspace}
\newcommand{\twds}{twisted differentials\xspace}

\newcommand{\ord}{\operatorname{ord}\nolimits}
\newcommand{\CC}{{\mathbb{C}}}

\newcommand{\PP}{{\mathbb{P}}}

\newcommand{\RR}{{\mathbb{R}}}
\newcommand{\ZZ}{{\mathbb{Z}}}

\newcommand{\VV}{{\mathbb{V}}}
\newcommand{\calO}{{\mathcal O}}

\newcommand{\calA}{{\mathcal A}}
\newcommand{\calB}{{\mathcal B}}

\newcommand{\calM}{{\mathcal M}}

\newcommand{\bM}{{\overline\calM}}

\newcommand{\calE}{{\mathcal E}}

\newcommand{\calX}{{\mathcal X}}
\newcommand{\calY}{{\mathcal Y}}
\newcommand{\calZ}{{\mathcal Z}}
\newcommand{\calD}{{\mathcal D}}


\newcommand{\bfu}{{\boldsymbol{u}}}

\newcommand{\bfzero}{{\boldsymbol{0}}}
\newcommand{\op}{\operatorname}

\newcommand{\SL}{\op{SL}}

\newcommand{\GL}{\op{GL}}

\newcommand{\proj}{{\mathbb P}}

\newcommand\Res{\operatorname{Res}}
\newcommand{\Turn}{\operatorname{Turn}}


\newcommand{\barmoduli}[1][g]{{\overline{\mathcal M}}_{#1}}
\newcommand{\moduli}[1][g]{{\mathcal M}_{#1}}

\newcommand{\omoduli}[1][g]{{\Omega\mathcal M}_{#1}}
\newcommand{\modulin}[1][g,n]{{\mathcal M}_{#1}}
\newcommand{\omodulin}[1][g,n]{{\Omega\mathcal M}_{#1}}

\newcommand{\pomoduli}[1][g]{{\proj\Omega\mathcal M}_{#1}}
\newcommand{\pobarmoduli}[1][g]{{\proj\Omega\overline{\mathcal M}}_{#1}}

\newcommand{\obarmoduli}[1][g]{{\Omega\overline{\mathcal M}}_{#1}}

\newcommand{\DC}{\Delta^{\circ}}

\newcommand{\omoduliinc}[2][g,n]{{\Omega\mathcal M}_{#1}^{{\rm inc}}(#2)}
\newcommand{\obarmoduliinc}[2][g,n]{{\Omega\overline{\mathcal M}}_{#1}^{{\rm inc}}(#2)}
\newcommand{\pobarmoduliinc}[2][g,n]{{\proj\Omega\overline{\mathcal M}}_{#1}^{{\rm inc}}(#2)}
\newcommand{\omoduliincp}[2][g,\lbrace n \rbrace]{{\Omega\mathcal M}_{#1}^{{\rm inc}}(#2)}
\newcommand{\obarmoduliincp}[2][g,\lbrace n \rbrace]{{\Omega\overline{\mathcal M}}_{#1}^{{\rm inc}}(#2)}

\newcommand{\famcurv}{\mathcal{X}} 
\newcommand{\famomega}{\mathcal{W}} 
\newcommand{\seczero}{\mathcal{Z}}

\newcommand{\dualsheave}[1][X]{\omega_{#1}}

\newcommand{\divisor}[1]{{\rm div }\left( #1 \right)}

\newcommand{\banach}[2]{\mathcal{O}(#1)_{#2}}

\newcommand{\permGroup}[1][n]{\mathfrak{S}_{#1}}


\newcommand{\rom}[1]{\textup{\uppercase\expandafter{\romannumeral#1}}}

\theoremstyle{plain}
\newtheorem{thm}{Theorem}[section]
\newtheorem{lm}[thm]{Lemma}
\newtheorem{prop}[thm]{Proposition}
\newtheorem{cor}[thm]{Corollary}

\theoremstyle{definition}
\newtheorem{df}[thm]{Definition}

\newtheorem{rem}[thm]{Remark}
\newtheorem{exa}[thm]{Example}

\def\be{\begin{equation}}   \def\ee{\end{equation}}     \def\bes{\begin{equation*}}    \def\ees{\end{equation*}}
\def\ba{\be\begin{aligned}} \def\ea{\end{aligned}\ee}   \def\bas{\bes\begin{aligned}}  \def\eas{\end{aligned}\ees}
\def\={\;=\;}  \def\+{\,+\,}

\begin{document}
\title[Compactification of strata]{Compactification of strata of abelian differentials}
\author[Bainbridge]{Matt Bainbridge}
\address{Department of Mathematics, Indiana University, Bloomington, IN 47405, USA}
\email{mabainbr@indiana.edu}
\thanks{Research of the first author is supported in part by the Simons Foundation grant \#359821}
\author[Chen]{Dawei Chen}
\address{Department of Mathematics, Boston College, Chestnut Hill, MA 02467, USA}
\email{dawei.chen@bc.edu}
\thanks{Research of the second author is supported in part by the National Science Foundation under the CAREER grant DMS-13-50396 and a Boston College Research Incentive Grant.}
\author[Gendron]{Quentin Gendron}
\address{Institut f\"ur algebraische Geometrie, Leibniz Universit\"at Hannover, Welfengarten 1,
30167 Hannover, Germany}
\email{gendron@math.uni-hannover.de}
\thanks{Research of the third author was supported in part by ERC-StG 257137.}
\author[Grushevsky]{Samuel Grushevsky}
\address{Mathematics Department, Stony Brook University,
Stony Brook, NY 11794-3651, USA}
\email{sam@math.stonybrook.edu}
\thanks{Research of the fourth author is supported in part by the National Science Foundation under the grants DMS-12-01369 and DMS-15-01265, and by a Simons Fellowship in Mathematics (Simons Foundation grant \#341858 to Samuel Grushevsky)}
\author[M\"oller]{Martin M\"oller}
\address{Institut f\"ur Mathematik, Goethe-Universit\"at Frankfurt, Robert-Mayer-Str. 6-8,
60325 Frankfurt am Main, Germany}
\email{moeller@math.uni-frankfurt.de}
\thanks{Research of the fifth author is supported in part by ERC-StG 257137.}

\begin{abstract}
We describe the closure of the strata of abelian differentials with prescribed type of zeros and poles, in the projectivized Hodge bundle over the Deligne-Mumford moduli space of stable curves with marked points. We provide an explicit characterization of pointed stable differentials in the boundary of the closure, both a complex analytic proof and a flat geometric proof for smoothing the boundary differentials, and numerous examples. The main new ingredient in our description is a global residue condition arising from a full order on the dual graph of a stable curve.
\end{abstract}

\date{\today}

\maketitle
\tableofcontents

\section{Introduction}\label{sec:intro}

\subsection{Background}\label{subsec:back}
The Hodge bundle $\omoduli[g]$ is a complex vector bundle of
rank $g$ over the moduli space $\calM_g$ of genus $g$ Riemann surfaces.
A point $(X,\omega)\in\omoduli[g]$ consists of
a Riemann surface $X$ of genus~$g$ and a (holomorphic) abelian differential~$\omega$ on $X$. The
complement of the zero section $\omoduli[g]^*$ is naturally stratified into strata
$\omoduli[g](\mu)$ where the multiplicity of all the zeros of $\omega$ is
prescribed by a partition $\mu = (m_1, \ldots, m_n)$ of $2g-2$.
By scaling the differentials, $\CC^*$ acts on $\omoduli[g]^*$ and preserves the stratification, hence
one can consider the projectivized strata $\proj\omoduli(\mu)$ in the projectivized Hodge bundle $\proj\omoduli[g] = \omoduli[g]^*/\CC^*$.

An abelian differential $\omega$ defines a flat metric with conical singularities such that the underlying
Riemann surface $X$ can be realized as a plane polygon whose edges are pairwise identified via translation. In this sense
$(X, \omega)$ is called a flat surface or a translation surface. Varying the shape of flat surfaces induces a $\GL_2^+(\RR)$-action
on the strata of abelian differentials, called Teichm\"uller dynamics. A number of questions about surface geometry boil down to
understanding the $\GL_2^+(\RR)$-orbit closures in Teichm\"uller dynamics. What are their dimensions? Do they possess
manifold structures? How can one calculate relevant dynamical invariants? From the viewpoint of algebraic geometry the orbit closures are of
an independent interest for cycle class computations, which can provide crucial information for understanding the geometry of moduli spaces.

Many of these questions can be better accessed if one can describe a geometrically meaningful compactification of the strata. In particular,
the recent breakthrough of Eskin, Mirzakhani, Mohammadi \cite{eskinmirzakhani, esmimo} and Filip \cite{filip} shows that any orbit closure
(under the standard topology) is a quasiprojective subvariety of a stratum. Thus describing the projective subvarieties that are closures of orbits in a compactified stratum
can shed further light on the classification of orbit closures.

Identify Riemann surfaces with smooth complex curves. The Deligne-Mumford compactification $\barmoduli$ of $\calM_g$ parameterizes stable genus $g$ curves that are (at worst) nodal curves with finite automorphism groups. The Hodge bundle $\omoduli[g]$ extends as a rank $g$ complex vector bundle $\obarmoduli$ over $\barmoduli$. The fiber of $\obarmoduli$ over a nodal curve $X$ parameterizes stable differentials that have (at worst) simple poles at the nodes of $X$ with opposite residues on the two branches of a node. One way of compactifying $\proj\omoduli(\mu)$ is by taking its closure in the projectivized Hodge bundle $\PP\obarmoduli$ over $\barmoduli$, and we call it the \emph{Hodge bundle compactification} of the strata.

Alternatively, one can lift a stratum $\proj\omoduli(\mu)$ to the moduli space $\calM_{g,n}$ of genus $g$ curves with $n$ marked points by adding on each curve the data of the zeroes
of differentials. Let $\barmoduli[g,n]$ be the Deligne-Mumford compactification of $\calM_{g,n}$ that parameterizes stable genus $g$ curves with $n$ marked points. Taking the closure of $\proj\omoduli(\mu)$ in $\barmoduli[g,n]$ provides another compactification, which we call the \emph{Deligne-Mumford compactification} of the strata.

By combining the two viewpoints above, in this paper we describe
a strata compactification that we call the {\em incidence variety compactification}
$\PP\obarmoduliinc{\mu}$. Let $\PP\obarmoduli[g,n]$ be the projectivized Hodge bundle over
$\barmoduli[g,n]$, which parameterizes \emph{pointed stable differentials}. Then the incidence variety compactification of $\proj\omoduli(\mu)$ is defined as the closure of the stratum in the projectivization $\proj\obarmoduli[g,n]$.

The incidence variety compactification records both the limit stable differentials and the limit positions of the zeros when abelian differentials become identically zero on some irreducible components of the nodal curve. It contains more information than the Hodge bundle compactification, because the latter loses the information
about the limit positions of the zeros on the components of nodal curves where the stable
differentials vanish identically. The incidence variety compactification also contains
more information than the Deligne-Mumford compactification, because the latter loses the information on the relative sizes of flat surfaces corresponding to the components of nodal curves where the stable differentials are not identically zero.

Our characterization of the boundary of the incidence variety compactification is in terms of a collection of (possibly meromorphic) differentials
on the components of a pointed stable curve that satisfy certain combinatorial and residue conditions given by a full order on the vertices of the dual graph of the curve. Meromorphic differentials naturally arise in the description of the boundary objects, and the incidence variety compactification works just as well for the strata of meromorphic differentials, hence we take the meromorphic case into account from the beginning. In order to deal with meromorphic differentials, we consider the closure of the corresponding strata in the Hodge bundle over $\barmoduli[g,n]$ twisted by the polar part $\tilde{\mu}$
of the differentials, which we denote by $K\barmoduli(\tilde{\mu})$ and introduce in Section~\ref{subsec:mero_ivc}.

Before we state the main result, let us first provide some motivation from several viewpoints for the reader to get a feel for the form of the answer that we get.

\subsection{Motivation via complex analytic geometry}\label{subsec:mot-analytic}

Given a pointed stable differential $(X, \omega, z_1,\ldots,z_n)\in\obarmoduli[g,n]$,
that is a stable curve $X$ with marked points $z_1, \ldots, z_n$ at the zeros of
a stable differential $\omega$, the question is
whether it is the limit of a family of abelian differentials $(X_t,\omega_t)$ contained in
a given stratum $\omoduli[g](\mu)$ such that the $z_i$ are the limits of the zeros
of $\omega_t$.
\par
Suppose $f:\calX\to\Delta$ is a family of abelian differentials over a disk $\Delta$ with parameter~$t$, whose underlying curves
degenerate to $X$ at $t=0$. If for an irreducible component $X_v$ of $X$
the limit
$$\omega_0 \,\coloneqq\, \lim_{t\to 0} \omega_t$$
is not identically zero,
then on $X_v$ the limits of zeros of $\omega_t$ are simply the zeros of~$\omega$.
Thus our goal is to extract from this family
a nonzero (possibly meromorphic) differential for every irreducible component of $X$ where $\omega_0$ is identically zero.
The analytic way to do this is to take for every $X_v$ a suitable scaling
parameter $\ell_v\in\ZZ_{\leq 0}$ such that the limit
$$\eta_v\,\coloneqq\,\lim_{t\to 0} t^{\ell_v} \omega_t|_{X_v}$$
is well-defined and not identically zero. This is done in Lemma~\ref{le:scalinglimit}. Along this circle of ideas, we prove our main result by the plumbing techniques in Section~\ref{sec:proof}.

\subsection{Motivation via algebraic geometry}\label{subsec:mot-algebraic}

Now we sketch the algebro-geometric viewpoint of the above setting.
Think of the family $\omega_t$ as a section of the vector bundle $f_* \omega_{\calX^*/\Delta^*}$ of abelian differentials on the fibers over
the punctured disc, where $\omega_{\calX^*/\Delta^*}$ is the relative dualizing line bundle. The Hodge bundle
$f_* \omega_{\calX/\Delta}$ extends
$f_* \omega_{\calX^*/\Delta^*}$ to a vector bundle over the entire disc, but so does any twisting $f_* \omega_{\calX/\Delta}
(\sum c_v X_v)$ by an arbitrary integral linear combination of the irreducible components $X_v$
of the central fiber $X$. Based on the idea of Eisenbud-Harris' limit linear series \cite{eihalimit} (for curves of compact type), we want to choose coefficients
$c_v$ in such a way that $\omega_t$ extends over $t=0$ to a section of the corresponding twisted dualizing line bundle, whose restriction
 $\eta_v$ to every irreducible component $X_v$ is not identically zero (see the discussion in~\cite{chen} for more details). While the machinery of limit linear series for stable curves of arbitrary type is not available in full generality, our Definition~\ref{def:twistedMeroDiff} of twisted differentials works for all stable curves. It is modeled on the collection
of $\eta_v$ defined above.

The next question is to determine which twisted differentials $\eta=\lbrace\eta_v\rbrace$ on a pointed stable curve $X$ arise as actual limits of abelian differentials $(X_t, \omega_t)$ that lie in a given stratum. First, $\eta$ must have suitable zeros at the limit positions of the zeros of $\omega_t$. Moreover if $X$ is reducible, the limit of canonical line bundles of $X_t$ is not unique, as it can be obtained by twisting the dualizing line bundle of $X$ by any component $X_v$ (treated as a divisor in the universal curve). Accordingly the limit $\eta$ of $\omega_t$ on $X$ can be regarded as a section of certain twisted dualizing line bundle, which is not identically zero on any component of $X$ (for otherwise we can twist off such a component). Since the dualizing line bundle of $X$ at a node is generated by differentials with simple poles, after twisting by $\sum c_v X_v$, on one branch of the node the zero or pole order gains $c_v$ and on the other branch it loses $c_v$, hence the zero and pole orders of $\eta$ on the two branches of every node must add up to the original sum of vanishing orders $-2 = (-1) + (-1)$. See~\cite[Section 4.1]{chen} for some examples and more details.
We can then partially orient the dual graph $\Gamma$ of $X$ by orienting the edge
from the zero of $\eta$ to the pole, and leaving it unoriented if the differential has a simple pole at both branches. In this way, we obtain a partial order on the vertices $v$ of $\Gamma$, with equality permitted (see also \cite{fapa}).

It turns out that a partial order is insufficient to characterize actual limits of abelian differentials in a given stratum.
For the degenerating family $(X_t, \omega_t)$, comparing the scaling parameters $\ell_v$
discussed in Section~\ref{subsec:mot-analytic} extends this partial order to a full order on the vertices of $\Gamma$, again with equality permitted.
The final ingredient of our answer is the global residue condition~(4)
in Definition~\ref{def:twistedAbType} that requires the limit twisted differentials~$\eta$ to be compatible with the full order on $\Gamma$.
Simply speaking, this global residue condition arises from applying Stokes' formula to~$\omega_t$ on each level of $\Gamma$ (i.e.,~truncating~$\Gamma$ at vertices that are equal in the full order), for $t\to 0$.

\subsection{Motivation via flat surfaces}\label{subsec:mot-flat}

Degeneration of flat surfaces has been studied in connection
with counting problems, for example in \cite{emz, EMR, ekz}. Most of
the degeneration arguments there rely on a theorem of Rafi~\cite{RafiThTh} on
the comparison of flat and hyperbolic lengths for surfaces near the boundary.
Rafi used a thick-thin decomposition of the flat surfaces by cutting along
hyperbolically short curves. For each piece of the thick-thin decomposition Rafi defined a real number, the size,
in terms of hyperbolic geometry.  His main theorem says that after
rescaling by size, hyperbolic and flat lengths on the thick
pieces are comparable, up to universal constants. Rafi's size is closely related to the scaling parameters $\ell_v$ discussed above, implied by comparing our scaling limit~\eqref{eq:scalinglimit} with the `geometric compactification
theorem' \cite[Theorem~10]{ekz}.
\par
Rafi associated to every closed geodesic a flat representative
of an annular neighborhood. Depending on the curvature of the boundary
such a neighborhood is composed of flat cylinders in the
middle and expanding annuli on both sides, any of the three
possibly not being present. For degenerating families of flat surfaces
this observation can be applied to the vanishing cycles of
the family nearby a nodal fiber. In our result we use the vanishing cycles to read off the global residue condition that constrains degeneration of flat surfaces.
\par
While some of our terminology might be translated into the language of
\cite{RafiThTh} or \cite{ekz}, in the literature on flat surfaces and Teichm\"uller dynamics,
no systematic attempt to describe the set of all possible limit objects under degeneration has been made. In Section~\ref{sec:flatproof} we provide an alternative proof of our main result by constructions of flat surfaces, where a pair of half-infinite cylinders correspond to two simple poles of a twisted differential attached together, and an expanding annulus that appears corresponds to a zero matching a higher order pole.

\subsection{Level graphs}\label{subsec:level}

We now introduce the relevant notions that will allow us to
state our result. Recall that $\Gamma$ denotes the dual graph of a nodal curve $X$, whose vertices and edges correspond to irreducible components and nodes of $X$, respectively. First, we start with the idea of comparing irreducible components of
$X$.

A {\em full order} on the graph $\Gamma$ is a relation $\succcurlyeq$ on
the set $V$ of vertices of $\Gamma$ that is reflexive, transitive,
and such that for any $v_1, v_2 \in V$ at least one of the
statements $v_1 \succcurlyeq v_2$ or $v_2 \succcurlyeq v_1$ holds. We say that $v_1$ is of {\em higher or equal level} compared with $v_2$ if and only if $v_1 \succcurlyeq v_2$. We write $v_1\asymp v_2$ if they are of the same level, that is if both $v_1 \succcurlyeq v_2$ and $v_2 \succcurlyeq v_1$ hold.
We write $v_1\succ v_2$ if $v_1\succcurlyeq v_2$ but $v_2\not\succcurlyeq v_1$, and say that~$v_1$ is of \emph{higher level} than $v_2$.
We call the set of maxima of $V$ the {\em top level} vertices.

\par
We remark that equality is permitted in our definition of a full order.  Any map $\ell:V\to\RR$ assigning real numbers to vertices of $\Gamma$ defines a full order on $\Gamma$ by setting $v_1 \succcurlyeq v_2$ if and only if $\ell(v_1) \geq \ell(v_2)$.
Conversely, every full order can be induced from such a level map, but not from a unique one. Later we will see that
the levels are related to the scaling parameters introduced in Section~\ref{subsec:mot-analytic}, hence it would be convenient to consider the maps $\ell:V\to\RR_{\le 0}$ assigning {\em non-positive} levels only, with $\ell^{-1}(0)\ne\emptyset$ being the top level.
\par
We call a graph $\Gamma$ equipped with a full order on its vertices a {\em level graph}, denoted by~$\overline{\Gamma}$.
We will use the two notions `full order' and `level graph' interchangeably, and draw the level graphs by aligning horizontally vertices of the same level, so that a level map is given by the projection to the vertical axis, and the top level vertices are actually placed at the top (see the examples in~Section~\ref{sec:examples}).

\subsection{Twisted differentials}\label{subsec:twisted}

Throughout the paper we use $\ord_{q} \eta$ to denote the zero or pole order of a differential $\eta$ at $q$, and use $\Res_{q} \eta$ to denote the residue of $\eta$ at $q$. We now introduce the key notion of twisted differentials.

\begin{df} \label{def:twistedMeroDiff}
For a tuple of integers $\mu = (m_1,\ldots,m_n)$,
a {\em twisted differential of type~$\mu$}
on a stable $n$-pointed curve $(X,z_1,\ldots,z_n)$
is a collection of (possibly meromorphic)
differentials $\eta_v$ on the irreducible components $X_v$ of $X$
such that no $\eta_v$ is identically zero and the following properties hold:
\begin{itemize}
\item[(0)] {\bf (Vanishing as prescribed)} Each differential $\eta_v$ is holomorphic and nonzero outside of the nodes and marked points of $X_v$. Moreover, if a marked point $z_i$ lies on~$X_v$, then $\ord_{z_i} \eta_v=m_i$.
\item[(1)] {\bf (Matching orders)} For any node of $X$ that identifies $q_1 \in X_{v_1}$ with $q_2 \in X_{v_2}$,
$$\ord_{q_1} \eta_{v_1}+\ord_{q_2} \eta_{v_2}\=-2. $$
\item[(2)] {\bf (Matching residues at simple poles)}  If at a node of $X$
that identifies $q_1 \in X_{v_1}$ with $q_2 \in X_{v_2}$ the condition $\ord_{q_1}\eta_{v_1}=
\ord_{q_2} \eta_{v_2}=-1$ holds, then $\Res_{q_1}\eta_{v_1}+\Res_{q_2}\eta_{v_2}=0$.
\end{itemize}
\end{df}
\par
These conditions imply that the set of zeros and poles of a twisted
differential consists of the marked points~$z_i$ and some of the nodes of the curve $X$.

\subsection{Twisted differentials compatible with a level graph}\label{subsec:grc}

We want to study under which conditions a twisted differential arises as a limit in a degenerating family of abelian differentials contained in a given stratum. As mentioned before, the conditions depend on a full order on the dual graph $\Gamma$ of $X$, and we need a little more notation.
\par
Suppose that~$\overline\Gamma$ is a level graph with the full order determined by a
level function~$\ell$. For a given level~$L$ we call the subgraph
of $\Gamma$ that consists of all vertices $v$ with $\ell(v) > L$ along with edges between them
the {\em graph above level $L$} of $\Gamma$, and denote it by
$\overline\Gamma_{>L}$. We similarly define the graph $\overline\Gamma_{\geq L}$
{\em above or at level $L$}, and the graph $\overline\Gamma_{=L}$
{\em at level $L$}.
\par
Accordingly we denote by $X_{>L}$ the subcurve of $X$ with dual graph $\overline\Gamma_{>L}$ etc.
For any node $q$ connecting two irreducible
components~$X_v$ and $X_{v'}$, if $v\succ v'$, we denote by $q^+\in X_v$ and $q^-\in X_{v'}$ the two preimages of the node on the normalization of $X$. If $v\asymp v'$, we still write $q_v^\pm$ for the preimages of the node, where the choice of which one is $q_v^+$ is arbitrary and will be specified.
In the same way
we denote by $v^+(q)$ and $v^-(q)$ the vertices at the two ends of an edge representing a node~$q$.
\par
\begin{df}\label{def:twistedAbType}
Let $(X, z_1, \ldots, z_n)$ be an $n$-pointed stable curve with a level graph $\overline\Gamma$.
A twisted differential $\eta$ of type $\mu$ on $X$ is called {\em compatible with~$\overline\Gamma$} if, in addition to conditions~(0), (1), (2) in Definition~\ref{def:twistedMeroDiff}, it also satisfies the following conditions:
\begin{itemize}
\item[(3)]{\bf (Partial order)} If a node of $X$  identifies $q_1 \in X_{v_1}$ with $q_2 \in X_{v_2}$, then $v_1\succcurlyeq  v_2$ if and only if $\ord_{q_1} \eta_{v_1}\ge -1$. Moreover,  $v_1\asymp v_2$ if and only if
$\ord_{q_1} \eta_{v_1} = -1$.
\item[(4)] {\bf (Global residue condition)} For every level $L$
and every connected component~$Y$ of $X_{>L}$ that does not
contain a marked point with a prescribed pole (i.e., there is no $z_i\in Y$ with $m_i<0$) the following condition holds. Let
$q_1,\ldots,q_b$ denote the set of all nodes where~$Y$ intersects $X_{=L}$. Then
$$ \sum_{j=1}^b\Res_{q_j^-}\eta_{v^-(q_j)}\=0,$$
where we recall that $q_j^-\in X_{=L}$ and $v^-(q_j)\in\overline\Gamma_{=L}$.
\end{itemize}
\end{df}
\par
We point out that a given twisted differential satisfying conditions~(0), (1) and (2) may not be compatible with any level graph, or may be compatible with different level graphs with the same underlying dual graph. Condition~(3) is equivalent to saying that, if $v_1$ is of higher level than
$v_2$, then $\eta_{v_1}$ is holomorphic at every node of the intersections of $X_{v_1}$ and $X_{v_2}$, and moreover if $v_1\asymp v_2$,
then $\eta_{v_1}$ and $\eta_{v_2}$ have simple poles at every node where they intersect.

\subsection{Main result}\label{subsec:main}

Recall that the incidence variety compactification of a stratum of abelian differentials $\proj\omoduli(\mu)$ is defined as the closure of the stratum in the projectivized Hodge bundle over $\barmoduli[g,n]$ that parameterizes pointed stable differentials. For a stratum of meromorphic differentials, we take the closure in the twisted Hodge bundle by the polar part of the differentials (see Section~\ref{sec:back_moduli} for details). Our main result characterizes boundary points of the incidence variety compactification for both cases.

\begin{thm} \label{thm:main}
A pointed stable differential $(X,\omega,z_1,\ldots,z_n)$ is contained in the incidence variety compactification of a
stratum $\proj\omoduli(\mu)$ if and only if the following conditions hold:
\begin{itemize}
\item[(i)] There exists a level graph $\overline\Gamma$ on $X$ such that its maxima are the irreducible components $X_v$ of $X$ on which~$\omega$ is not identically zero.
\item[(ii)] There exists a twisted differential~$\eta$ of type $\mu$ on~$X$, compatible with $\overline\Gamma$.
\item[(iii)] On every irreducible component $X_v$ where $\omega$ is not identically zero, $\eta_{v} = \omega|_{X_v}$.
\end{itemize}
\end{thm}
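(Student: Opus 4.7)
The statement is an ``if and only if,'' so I would split the proof into a necessity direction and a sufficiency direction, keeping the two main techniques previewed in Sections~\ref{subsec:mot-analytic} and~\ref{subsec:mot-algebraic} clearly separated.

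For the necessity direction, I would start with a one-parameter degenerating family $(X_t,\omega_t)$ with $\omega_t\in\omoduli(\mu)$ and limit $(X,\omega,z_1,\dots,z_n)$, and apply the scaling lemma (Lemma~\ref{le:scalinglimit}, whose statement is referenced) to each irreducible component $X_v$ of $X$ to obtain an integer $\ell_v\le 0$ and a nonzero meromorphic differential $\eta_v=\lim_{t\to 0} t^{\ell_v}\omega_t|_{X_v}$. The values $\ell_v$ define the level function, with the maxima $\ell_v=0$ being precisely the components where $\omega$ does not vanish identically, yielding condition~(i) and condition~(iii) at once. Conditions~(0), (1), (2) of Definition~\ref{def:twistedMeroDiff} are then verified locally: (0) uses that the zeros of $\omega_t$ converge to the $z_i$; (1) uses that the relative dualizing sheaf has degree $-2$ on each pair of branches at a node; and (2) follows from the continuity of residues and the simple-pole/opposite-residue condition defining stable differentials. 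Condition~(3) is a direct comparison between the scaling parameters on the two sides of a node and the resulting orders of vanishing in $\eta$.

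The delicate point in the necessity direction is the global residue condition~(4). My plan is to apply Stokes' theorem to $\omega_t$ integrated against suitable $1$-chains on the smooth fibers $X_t$. Concretely, for a level $L$ and a connected component $Y$ of $X_{>L}$, I would pick small loops around each vanishing cycle corresponding to a node $q_j$ where $Y$ meets $X_{=L}$; these loops bound a subsurface $Y_t\subset X_t$ which smooths $Y$ but has been cut open at the chosen level-$L$ nodes. On $Y_t$ the $1$-form $\omega_t$ is holomorphic (the hypothesis that $Y$ contains no prescribed pole is what rules out extra boundary contributions from the marked points), so the sum of its residues against these loops vanishes for every $t\ne 0$. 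Rescaling by $t^{\ell_v}$ with $v$ at level $L$ and passing to the limit, only the residues $\Res_{q_j^-}\eta_{v^-(q_j)}$ survive, which yields (4).

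For the sufficiency direction, I would follow the plumbing strategy of Section~\ref{sec:proof}: starting from a twisted differential $\eta$ compatible with $\overline\Gamma$, build a family of pointed differentials smoothing $X$ whose limit recovers the prescribed data. The idea is to pick a level function $\ell:V\to\ZZ_{\le 0}$ inducing $\overline\Gamma$ and rescale the components by $t^{-\ell_v}$ so that the would-be limit is $\omega$ on the top level and $0$ on lower levels, while the scaling limits on each component are exactly $\eta_v$. The plumbing is performed node by node: at a node between equal-level components one glues via the standard $uv=t$ model, matched residues from~(2) guaranteeing a well-defined limit differential; at a node between components at different levels one uses the local standard forms for $\eta$ on the two branches to produce a smoothing in a family of $\mathcal{O}(D)$-twisted relative differentials, with plumbing parameter tied to the scaling $t^{\ell_v-\ell_{v'}}$. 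The global residue condition~(4) is precisely what ensures that the plumbed differential at each level can be adjusted by an exact differential so that the periods/residues match globally, allowing the construction to close up into an actual family of holomorphic differentials in $\omoduli(\mu)$.

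The main obstacle I anticipate is the sufficiency direction, specifically the fact that one cannot plumb an arbitrary twisted differential while preserving the prescribed type $\mu$: higher-order poles must be ``absorbed'' into the smoothing and cannot simply be cancelled at the node. The way the global residue condition enters is subtle — it has to be used to solve a linear system that adjusts the differentials on each level by suitable holomorphic forms so that the plumbed object really has no residues at the surviving singularities and no new zeros. Executing this level by level, starting from the bottom of $\overline\Gamma$ and working upward (or conversely, depending on which direction makes the recursion cleaner), and checking that the resulting family indeed defines a curve in the incidence variety compactification, will be the technical heart of the argument.
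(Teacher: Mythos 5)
Your necessity argument matches the paper's closely: the scaling limits from Lemma~\ref{le:scalinglimit} give the level function and conditions~(i), (iii); conditions~(0)--(3) are verified locally (the paper uses turning numbers and limit periods of loops near the nodes for (1)--(2), and also sketches the algebraic alternative via twisting the dualizing sheaf that you describe); and condition~(4) comes from Stokes' theorem applied to $\omega_t$ on the subsurface $Y_t\subset X_t$ bounded by vanishing cycles, exactly as you propose. This direction is essentially the paper's argument.

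The sufficiency direction has the right overall shape (induction on levels, classical plumbing $uv=t$ at equal-level nodes, a ``higher-order'' plumbing at unequal-level nodes), but the crucial mechanism you propose for making the residues match is wrong and would not work. You say the global residue condition lets one \emph{adjust the differential by an exact differential} (later: by ``suitable holomorphic forms'') so that periods/residues match globally. An exact or holomorphic $1$-form has zero residues at every point, so adding one can never change the residue data at the nodes — this step would fail. What the paper actually does (Theorem~\ref{thm:plumbgeneral} and Lemma~\ref{le:resmodification}) is add to the \emph{upper-level} differential a small multiple $t^b\xi$ of a \emph{meromorphic} ``modification differential'' $\xi$ with prescribed \emph{simple poles} at the nodes $q_j^+$, whose residues are set equal to $-\Res_{q_j^-}\eta_{v^-(q_j)}$. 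The existence of such $\xi$ on each connected component of $Y_{>L}$ is a Mittag-Leffler problem, and its solvability requires precisely that the sum of the prescribed residues vanish — this is exactly where the global residue condition enters (or, if the component contains a marked pole, the leftover residue is dumped there). After this modification the two sides of each node have genuinely opposite residues and can be glued via the deformed standard coordinates of Theorem~\ref{thm:deformed_standard_coordinates}. You correctly anticipate that the modification may disperse a zero of $\omega_t$ into several lower-order zeros, but you do not identify the fix: a separate local ``merging'' construction (Lemma~\ref{lm:merge}), again via deformed standard coordinates and implicit-function-theorem techniques in a Banach space, which glues the dispersed zeros back into a single zero of the prescribed order so that the family stays in $\omoduli(\mu)$. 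These two ingredients — a meromorphic (not exact/holomorphic) modification differential with simple poles governed by the GRC, and the zero-merging step — are the technical heart you were missing.

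Finally, a small point on the induction: you leave open whether to go from the bottom of $\overline\Gamma$ upward or from the top downward. The paper goes top-down: it assumes the result for $X_{>L}$ (one fewer level) to get a degenerating family on the upper part, then glues in the bottom level $X_{=L}$ with a constant family, using the modification differential built on the family over $X_{>L}$.
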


In Sections~\ref{subsec:mot-analytic} and~\ref{subsec:mot-algebraic}, we have briefly explained the ideas behind the necessity of these conditions. The sufficiency part is harder, that is, how can we deform pointed stable differentials satisfying the above conditions into the interior of the stratum? We provide two proofs, in Section~\ref{sec:proof} by using techniques of plumbing in complex analytic geometry, and in Section~\ref{sec:flatproof} by using constructions of flat surfaces.
\par
Recall that the incidence variety compactification combines the two approaches of compactifying the strata in the projectivized Hodge bundle $\proj\obarmoduli$ over $\barmoduli$ and in the Deligne-Mumford space $\barmoduli[g,n]$. In particular, it admits two projections $\pi_1$ to $\PP\obarmoduli$ and $\pi_2$ to $\barmoduli[g,n]$ by forgetting the marked points and forgetting the differentials, respectively. Hence our result completely determines
the strata closures in the Hodge bundle compactification and in the Deligne-Mumford compactification.
\par
\begin{cor}\label{cor:main}
A stable differential $(X, \omega)$ lies in the Hodge bundle compactification of a stratum if and only if there exists a pointed stable differential satisfying the conditions in Theorem~\ref{thm:main} that maps to $(X, \omega)$ via $\pi_1$.

A pointed stable curve $(X, z_1, \ldots, z_n)$ lies in the Deligne-Mumford  compactification of a stratum if and only if there exists a pointed stable differential satisfying the conditions in Theorem~\ref{thm:main} that maps to $(X, z_1, \ldots, z_n)$ via $\pi_2$.
\end{cor}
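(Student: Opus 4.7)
The plan is to derive both statements directly from Theorem~\ref{thm:main} together with the properness of the two projection maps $\pi_1\colon \proj\obarmoduli[g,n]\to\PP\obarmoduli$ and $\pi_2\colon \proj\obarmoduli[g,n]\to\barmoduli[g,n]$. The only nontrivial ingredient is the elementary fact that for a proper morphism $f\colon A\to B$ and any subset $S\subseteq A$, one has $f(\overline S)=\overline{f(S)}$: indeed $f(\overline S)$ is closed by properness, it contains $f(S)$, and by continuity it is contained in $\overline{f(S)}$.

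First I would record that $\pi_1$ and $\pi_2$ are proper. For $\pi_2$ this is immediate since the source is a projectivized vector bundle over its target. For $\pi_1$ properness follows by combining the properness of the forgetful map $\barmoduli[g,n]\to\barmoduli$ with the properness of the projectivized Hodge bundles over each base. (In the meromorphic setting, $\pi_1$ is replaced by the natural map from $K\barmoduli(\tilde\mu)$ to the appropriately twisted Hodge bundle over $\barmoduli$, which is still proper.)

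Next I would identify the images of the open stratum under each projection. A point of $\proj\omoduli(\mu)\subset\proj\obarmoduli[g,n]$ is a pointed stable differential $(X,\omega,z_1,\dots,z_n)$ with $X$ smooth and the $z_i$ at the zeros of $\omega$ with the prescribed multiplicities. Forgetting the marked points produces $(X,\omega)$ in the stratum viewed as a subset of $\proj\omoduli[g]$, while forgetting the differential produces $(X,z_1,\dots,z_n)$ in the corresponding stratum inside $\calM_{g,n}$. Hence $\pi_1(\proj\omoduli(\mu))$ and $\pi_2(\proj\omoduli(\mu))$ are exactly the open strata whose closures define, respectively, the Hodge bundle compactification and the Deligne--Mumford compactification.

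Combining these two ingredients gives $\pi_i(\overline{\proj\omoduli(\mu)})=\overline{\pi_i(\proj\omoduli(\mu))}$ for $i=1,2$. Therefore $(X,\omega)$ lies in the Hodge bundle compactification if and only if it is of the form $\pi_1(P)$ for some $P$ in the incidence variety compactification; by Theorem~\ref{thm:main} such $P$ is precisely a pointed stable differential satisfying conditions~(i)--(iii), which yields the first statement. The second statement follows by the same argument with $\pi_2$ replacing $\pi_1$. I expect no serious obstacle: the argument is essentially formal, and the only point requiring care is the standard check that when several entries of $\mu$ coincide the labelling ambiguity of the marked points is a finite symmetry that is washed out under projection.
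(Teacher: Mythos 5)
Your proof is correct and is essentially the argument the paper implicitly invokes: the paper states the corollary without a separate proof, treating it as an immediate consequence of Theorem~\ref{thm:main} together with the fact that $\pi_1$ and $\pi_2$ are proper surjections from the (projective, hence compact) incidence variety compactification onto the respective closures, which is precisely your observation that a proper map sends closures to closures. Your write-up simply makes the implicit topological bookkeeping explicit.
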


\begin{rem}\label{rem:finitechoice}
Given a level graph $\overline\Gamma$, there can only exist finitely many twisted differentials $\eta$ compatible with $\overline\Gamma$ and satisfying the conditions of the theorem, up to scaling~$\eta_v$ on each irreducible component by a nonzero number. Indeed, if all the pole and zero orders of $\eta$ at all nodes are given, it determines $\eta$ uniquely up to scaling on each irreducible component. For any irreducible component $X_v$ on the bottom level, the zeros and poles of~$\eta_v$ are prescribed outside of the nodes of $X_v$, and at those nodes~$\eta_v$ only has poles. As the total number of zeros and poles of $\eta_v$, counted with multiplicity, is equal to $2g_v-2$, it implies that the sum of the orders of poles of~$\eta_v$ at all nodes of $X_v$ is fixed, and hence there are finitely many choices. For each such choice, on every irreducible component intersecting $X_v$ the order of zero of $\eta$ at any node where it intersects $X_v$ is thus uniquely determined. If another component on the bottom level intersects $X_v$, then $\eta$ has a simple pole at both branches of that node. Therefore, we can prove that there are finitely many choices of $\eta$ up to scaling, by induction on the number of irreducible components of $X$.
\end{rem}

For later use we relate a twisted differential $\eta$ and a pointed stable differential $(X, \omega, z_1, \ldots, z_n)$ satisfying
Theorem~\ref{thm:main} as follows.

\begin{df}\label{def:associated-diff}
Given a twisted differential~$\eta$ compatible with a level graph on $X$, define the {\em associated pointed stable differential} $(X,\omega, z_1,\ldots, z_n)$ by taking $\omega$ to be equal to $\eta$ on all top level components and identically zero on components of lower levels, and by taking $z_1, \ldots, z_n$ to be the set of zeros and poles of $\eta$ away from the nodes of~$X$.

Conversely given a pointed stable differential $(X,\omega, z_1,\ldots, z_n)$, if it is associated to a twisted differential $\eta$ compatible with certain level graph on $X$, we say that $\eta$ is an {\em associated twisted differential} of $(X,\omega, z_1,\ldots, z_n)$.
\end{df}

Using the above definition, we can restate Theorem~\ref{thm:main} as that a pointed stable differential
$(X, \omega, z_1, \ldots, z_n)$ is contained in the incidence variety compactification of $\proj\omoduli(\mu)$ if and only if there exists a level graph and a compatible twisted differential~$\eta$ of type $\mu$ such that $(X, \omega, z_1, \ldots, z_n)$ is associated with $\eta$.

Note that multiplying $\eta$ by any nonzero numbers on components not of top level does not change the associated pointed stable differential. Hence given a pointed stable differential, the associated twisted differential~$\eta$ may not be unique. Indeed it is not unique even up to scaling on each irreducible component (see Example~\ref{exa:etanotunique}).

\subsection{History of the project and related work} \label{subsec:related}

Recently there have been several attempts via different viewpoints that aim at understanding the boundary behavior of the strata of abelian differentials. In a talk given in August 2008, Kontsevich discussed the problem of compactifying the strata, focusing on the matching order and matching residue conditions. In \cite{gendron} the third author studied the incidence variety compactification and applied the plumbing techniques to prove a special case of our main result when all the residues of a twisted differential are zero. In that case the global residue condition~(4) obviously holds. Motivated by the theory of limit linear series, in \cite{chen} the second author studied the Deligne-Mumford strata compactification and deduced the necessity of the conditions~(0), (1), and (2). He also obtained partial smoothing results in the case of curves of pseudo-compact type by combining techniques of algebraic geometry and flat geometry. In \cite{fapa} Farkas and Pandharipande studied the Deligne-Mumford strata compactification by imposing the conditions~(0), (1), (2), and (3), i.e., without the global residue condition~(4). It turns out the corresponding loci are reducible in general, containing extra components of equal dimension or one less in the boundary of $\barmoduli[g,n]$. Modulo a conjectural relation to Pixton's formula of the double ramification cycle, in \cite[Appendix]{fapa} Janda, Pandharipande, Pixton, and Zvonkine used the extra components to recursively compute the cycle classes of the strata in $\barmoduli[g,n]$. In \cite{miwr} Mirzakhani and Wright concentrated on a collapse of the Hodge bundle compactification by keeping track only of components where the stable differentials are not identically zero. They proved an identification between the tangent space of the boundary of a $\GL_2^+(\RR)$-orbit closure and the intersection of the tangent space to the orbit closure with the tangent space to the boundary of their compactification. In \cite{grkrno} Krichever, Norton, and the fourth author studied
degenerations of meromorphic differentials with all periods real, where plumbing techniques are also used and a full order on the dual graph also arises.

In Summer 2015 the authors of the current paper met in various combinations on several occasions, including in Bonn, Luminy, Salt Lake City, and Boston. After stimulating discussions, the crucial global residue condition and the proof of sufficiency emerged, finally completing the characterization of the compactification of the strata.

\subsection{Applications}\label{subsec:applications}
The main novel aspect of our determination of the closures of the strata is the global residue condition, which is used to characterize exactly those stable differentials that appear in the closure of a stratum in the Hodge bundle, and not to extraneous components. Thus any further work aimed at understanding the structure of strata compactifications must build on our description in an essential way. In particular, the global residue condition was a cornerstone in the work of Sauvaget~\cite{Sauvaget} who analyzed the boundary of the strata in the Hodge bundle in order to understand the homology classes of the strata closures, and in the work of Mullane~\cite{Mullane}, who used the global residue condition to analyze certain divisor closures in $\barmoduli[g,n]$, and discovered an infinite series of new extremal effective divisors. Furthermore, in~\cite{ChenChen} Qile Chen and the second author described algebraically the principal boundary of the strata in terms of twisted differentials, solving a problem that had been open for flat surfaces for more than a decade, while Ulirsch, Werner, and the fifth author applied in \cite{MUW}
our compactification to solve the realizability problem for constructing the tropical Hodge bundle, posed in~\cite{Tropical}. Finally in Section~\ref{subsec:weierstrass}, as a consequence of our result we provide an efficient description for the degeneration of Weierstra\ss{} divisors on certain binary curve, recovering a main case in the work of \cite{esme}.

\subsection{Organization of the paper}\label{subsec:organization}
In Section~\ref{sec:back_moduli} we review basic properties about moduli spaces of curves and abelian differentials.
We further compare the incidence variety compactification to the Hodge bundle compactification and the Deligne-Mumford compactification, which illustrates what extra information the incidence variety compactification gains. In Section~\ref{sec:examples} we apply Theorem~\ref{thm:main} to analyze explicitly a number of examples that characterize the significance and delicacy of the global residue condition. In Section~\ref{sec:proof} we prove Theorem~\ref{thm:main} by the method of plumbing. Finally in Section~\ref{sec:flatproof} we provide an alternative proof by constructions of flat surfaces. In particular, Sections~\ref{sec:examples}, ~\ref{sec:proof} and~\ref{sec:flatproof} are independent. Depending on the reader's background and interests, they can be read in any order.

\subsection{Acknowledgments}

We are grateful to Curt McMullen for useful conversations on the proof of the main theorem, to Gavril Farkas and Rahul Pandharipande for communications with us at the Arbeitstagung at Max Planck Institute in Bonn, June 2015, and to Adrian Sauvaget and Dimitri Zvonkine for discussions at the Flat Surfaces workshop in Luminy, July 2015. We thank all the organizers and participants of the Flat Surfaces workshop for their interests in this problem and motivating us to work on the proof from the viewpoint of flat geometry. The second author thanks Alex Eskin, Joe Harris, and Anton Zorich for encouraging him over the years to study Teichm\"uller dynamics via algebraic geometry. The fourth author thanks Igor Krichever and Chaya Norton, with whom he has been developing related ideas and techniques for real-normalized meromorphic differentials. Finally we thank the anonymous referees for carefully reading the paper and many useful comments.

\section{Comparison of strata compactifications}\label{sec:back_moduli}
In this section we introduce the basic terminology
about moduli spaces of curves and strata of abelian differentials.
We also define the incidence variety
compactification and compare it to the Hodge bundle and the Deligne-Mumford compactifications.

\subsection{Moduli spaces of curves}\label{subsec:back_moduli}
Denote by $\moduli$ the  {\em moduli space of curves of genus $g$} that parameterizes
smooth and connected complex curves of genus $g$, up to biholomorphism.
Denote by $\modulin$ the {\em moduli space of $n$-pointed genus $g$ curves} that parameterizes
smooth and connected complex curves of genus $g$ together with $n$ distinct (ordered) marked points.
The space $\modulin$ is a complex orbifold of dimension $3g-3+n$.

Recall that a {\em stable $n$-pointed curve} is a connected curve with at worst nodal singularities,
with $n$ distinct marked smooth points, such that the automorphism group of the curve preserving the marked points is finite.
Denote by $\barmoduli[g,n]$ the {\em Deligne-Mumford compactification}
of $\barmoduli[g,n]$ parameterizing stable $n$-pointed genus $g$ curves.
\par
Let $\mathfrak{S}$ be a subgroup of the symmetric group $\mathfrak{S}_{n}$. Then  $\mathfrak{S}$ acts on $\barmoduli[g,n]$ by permuting the marked points. The quotient of $\barmoduli[g,n]$ by $\mathfrak{S}$ is denoted by $\barmoduli[g,\left\{n\right\}]^{\mathfrak{S}}$ or simply $\barmoduli[g,\left\{n\right\}]$ when the group $\mathfrak{S}$ is clear from the context.
\par
The {\em dual graph} of a nodal curve $X$ is the graph~$\Gamma$ whose vertices
correspond to the irreducible components of~$X$. For every node of~$X$ joining
two components $v_1$ and $v_2$ (possibly being the same component) the dual
graph has an edge connecting $v_1$ and $v_2$.
For every marked point there is a leg or equivalently a half-edge attached to
the vertex corresponding to the irreducible component that contains the
marked point.
\par
For a connected nodal curve $X$, if removing a node disconnects $X$, we say that it is a \emph{separating} node. Otherwise we call it
a \emph{non-separating} node. If the two branches of a node belong to the same irreducible component of $X$, we say that it is an
\emph{internal} node.

\subsection{Moduli spaces of abelian differentials}\label{subsec:back_abelian}

The {\em moduli space of abelian differentials $\omoduli^*$}
is the complement of the zero section in the {\em Hodge bundle} $\omoduli\to\moduli$,
which parameterizes pairs $(X, \omega)$ where $X$ is a
smooth and connected curve of genus~$g$, and~$\omega$ is a holomorphic differential on~$X$.
The space $\omoduli^*$ has a natural stratification according to the
orders of zeros of $\omega$.
Let $\mu = (m_1,\ldots, m_n)$ be a partition of $2g-2$ by positive integers. The
\emph{stratum $\omoduli(\mu)$ of abelian differentials of type $\mu$}, as a subspace of
$\omoduli^*$, parameterizes abelian differentials $(X,\omega)$ such that
$\omega$ has $n$ distinct zeros of orders $m_1,\ldots,m_n$, respectively.

A {\em stable differential} on a nodal curve
$X$ is a (possibly meromorphic) differential~$\omega$ on~$X$ which is holomorphic outside
of the nodes of~$X$ and which has at worst simple poles at the
nodes, with opposite residues. The Hodge bundle
extends to a vector bundle $\obarmoduli[g,n]\to\barmoduli[g,n]$,
the total space of the relative dualizing sheaf of the universal family
$f: \calX \to \barmoduli[g,n]$. The fiber of this vector bundle over
a pointed nodal
curve~$(X,z_1,\ldots,z_n)$  parameterizes {\em pointed stable differentials}
$(X, \omega, z_1, \ldots, z_n)$, where  $\omega$ is a stable differential
on~$(X,z_1,\ldots,z_n)$.
\par
There is a natural $\CC^{\ast}$-action on the Hodge bundle by scaling the differentials. This action
preserves the stratification of $\omoduli^*$. The quotient of $\omoduli^*$
under this action is denoted by $\pomoduli$. In general, quotient spaces
by such a $\CC^{\ast}$-action will be denoted by adding the letter $\proj$.

\subsection{The incidence variety compactification}\label{subsec:ivc}

When abelian differentials degenerate, we want to keep track of the information about both the limit stable differentials and the limit positions of the marked zeros. This motivates the definition of the incidence variety.

For a partition $\mu = (m_1, \ldots, m_n)$ of $2g-2$, the {\em (ordered) incidence variety}
$\PP\omoduliinc{\mu}$ is defined to be
\begin{equation}
\PP\omoduliinc{\mu} \,\coloneqq\, \Bigl\{(X, \omega, z_{1},\ldots,z_{n}) \in
 \PP \omodulin:\
\divisor{\omega} \= \sum_{i=1}^{n} m_{i}z_{i}  \Bigr\}.
\end{equation}
The {\em (ordered) incidence variety compactification}
$\PP\obarmoduliinc{\mu}$ is defined to be the closure
of the incidence variety inside $\PP \obarmoduli[g,n]$.

If  $m_i=m_j$ for some $i\neq j$, then interchanging $z_i$ and $z_j$ preserves
the incidence variety. Hence the subgroup $\permGroup[]$ of the symmetric group
$\permGroup[n]$ generated by such transpositions acts on $\PP\obarmoduliinc{\mu}$ by permuting
 $z_1,\ldots,z_n$ accordingly.
We define the {\em (unordered) incidence variety} to be the quotient
\begin{equation}
\PP\omoduliincp{\mu} \,\coloneqq\, \PP\omoduliinc{\mu}
/\mathfrak{S}
\end{equation}
and define the {\em (unordered) incidence variety compactification} to be the quotient
\begin{equation}
\PP\obarmoduliincp{\mu} \,\coloneqq\, \PP\obarmoduliinc{\mu}
/\mathfrak{S}.
\end{equation}

Since the ordered and unordered incidence variety compactifications differ only by permuting the marked points, we
call both of them the incidence variety compactification. In case we need to distinguish them, we do so by specifying $n$ or
$\{ n\}$ in the subscripts.

On a smooth curve $X$, a non-zero differential $\omega$ determines its zeros $z_i$ along with their multiplicities. Hence
$\PP\omoduliincp{\mu}$ is isomorphic to the projectivized stratum of abelian differentials $\PP\omoduli[g](\mu)$. In order to understand degeneration of abelian differentials in $\PP\omoduli[g](\mu)$, we need to describe the boundary points of
the incidence variety compactification.

\subsection{Moduli spaces of pointed meromorphic differentials}\label{subsec:mero}

The moduli spaces of meromorphic differentials from the viewpoint of flat geometry have been investigated by Boissy \cite{boissymero}, including their dimensions and connected components. Let
$$\mu \= (m_1,\ldots, m_r; m_{r+1}, \ldots, m_{r+s};m_{r+s+1},\ldots,m_{r+s+l})$$
be an $n$-tuple of integers such that $\sum_{i=1}^{n} m_i = 2g-2$, where $m_i>0$ for $i \leq r$, $m_{r+1}=\cdots=m_{r+s}=0$, and $m_i <0$ for $i>r+s$. We call such $\mu$ a meromorphic type, and denote by
$\omoduli(\mu)$ the {\em moduli space of meromorphic
differentials of type~$\mu$}. It parameterizes $n$-pointed meromorphic differentials $(X,\omega,z_{1},\cdots,z_{n})$ on
a smooth curve $X$ such that the order of $\omega$ at $z_i$ is equal to $m_i$, which may be a zero,
regular point, or pole, corresponding to whether $m_i>0$, $m_i=0$, or $m_i<0$, respectively.
As there are infinitely many meromorphic types for a fixed genus, these moduli
spaces no longer form a stratification of a fixed ambient space, but by a slight abuse
of language we still call them {\em strata of meromorphic differentials}.

\subsection{The incidence variety compactification in the meromorphic case}\label{subsec:mero_ivc}

To mimic the definition in the abelian case we need to generalize the notion of the Hodge bundle. We denote the polar part of $\mu$
by $\tilde\mu=(m_{r+s+1},\ldots,m_n)$. We then define the {\em pointed Hodge
bundle twisted by $\tilde{\mu}$} to be the bundle
$$K\barmoduli[g,n](\tilde{\mu}) \= f_* \omega_{\calX/\barmoduli[g,n]}
\Bigl(-\sum_{i=r+s+1}^n m_i\seczero_i\Bigr)$$
over $\barmoduli[g,n]$, where we have denoted by $\seczero_i$ the image
of the section of the universal family $f$ given by the $i$'th marked point.
We call points $(X,\omega,z_{1},\ldots,z_{n})\in K\barmoduli[g,n](\tilde{\mu})$
{\em pointed stable differentials (of type~$\tilde{\mu}$)}.
Note that if $\mu$ is a holomorphic type, then the tuple $\tilde\mu$ is empty, so $K\barmoduli(\tilde\mu)=\obarmoduli$, and hence it recovers the preceding setting for the abelian case.
\par
For any meromorphic type $\mu$, we perform the same operations inside the space $K\barmoduli(\tilde\mu)$
as in the abelian case. The {\em (ordered) incidence variety} $\PP\omoduliinc{m_{1},\ldots,m_{n}}$ is defined as
\begin{equation}
\PP\omoduliinc{\mu} \=
\Bigl\{(X, \omega, z_{1},\ldots,z_{n}) \in \PP K\modulin(\tilde{\mu}):\
\divisor{\omega} \= \sum_{i=1}^{n} m_{i}z_{i}  \Bigr\},
\end{equation}
and the {\em (ordered) incidence variety compactification} $\PP\obarmoduliinc{\mu}$ is defined to be its
closure in $\PP K\barmoduli[g,n](\tilde{\mu})$.
We define the
{\em (unordered) incidence variety} to be the quotient
\begin{equation}
\PP\omoduliincp{\mu}\=\PP\omoduliinc{\mu}/\mathfrak{S},
\end{equation}
where $\mathfrak{S}$ is defined as in Section~\ref{subsec:ivc}. Finally, we define the
{\em (unordered) incidence variety compactification}
to be the quotient
\begin{equation}
\PP\obarmoduliincp{\mu} \= \PP\obarmoduliinc{\mu}/\mathfrak{S}.
\end{equation}

Again, by a slight abuse of language we sometimes skip the terms `ordered' and `unordered', and refer to both of them
as the incidence variety compactification of the strata of meromorphic differentials.

\subsection{Comparison to the Hodge bundle and the Deligne-Mumford compactifications}\label{subsec:comparison}

As said earlier, the incidence variety
compactification admits two forgetful maps to the projectivized Hodge bundle and to the Deligne-Mumford space, respectively. In this subsection, we discuss these maps and demonstrate that they indeed forget information. For simplicity of notation, we will only state this in the holomorphic case. The discussion can
be easily generalized to the meromorphic case. Thus we start with a
partition $\mu = (m_{1},\ldots,m_{n})$ of $2g-2$ by positive integers.
\par
The forgetful map
\begin{equation}\label{eq:forget1}
\pi_{1}:\PP\obarmoduliincp{\mu}\to\PP\obarmoduli(\mu)
\end{equation}
forgets the marked points $z_1, \ldots, z_n$. More precisely, the image of $(X,\omega,z_{1},\ldots,z_{n})$ under $\pi_{1}$ is
$(X',\omega')$, where $X'$ is obtained from $X$ (as an unmarked curve) by blowing down all $\PP^1$ tails and bridges, and $\omega'$ can be identified with the restriction of $\omega$ to the remaining components (see \cite[Lemma~2.4]{gendron} for details). The other  forgetful map
\begin{equation}\label{eq:forget2}
 \pi_{2}:\PP\obarmoduliincp{\mu}\to
\barmoduli[g,\left\{n\right\}]
\end{equation}
forgets the stable differential $\omega$, hence the image of $(X,\omega,z_{1},\ldots,z_{n})$ under $\pi_{2}$ is
just $(X, z_1, \ldots, z_n)$.

Since a differential on a compact curve is determined uniquely (up to scaling) by the locations and orders of its zeros, both maps $\pi_1$ and $\pi_2$, when restricted to $\PP\omoduliincp{\mu}$, are isomorphisms onto their respective images. However, over the boundary the fibers of both maps can be more complicated. In particular, they may no longer be finite, and neither image dominates the other.

\begin{prop} \label{prop:relationClosures}
For $g\geq3$ and $n\geq2$, the following properties hold:
\begin{itemize}
 \item[(i)] The map $\pi_1$ is not finite, and there does not exist a map
$$f:\PP\obarmoduli(\mu)\to\barmoduli[g,\left\{n\right\}]$$
such that $f\circ\pi_{1}=\pi_{2}$.
 \item[(ii)]  The map $\pi_2$ is not finite, and there does not exist a map
$$h:\pi_{2}\Bigl(\PP\obarmoduliincp{\mu}\Bigr)\to\PP\obarmoduli(\mu)$$
such that $h\circ\pi_{2}=\pi_{1}$.
\end{itemize}
\end{prop}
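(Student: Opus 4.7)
Both assertions will be established by exhibiting explicit boundary points of $\PP\obarmoduliincp{\mu}$ over which the projection in question has a positive-dimensional fiber, and observing simultaneously that the \emph{other} projection separates the points of this fiber, thereby ruling out the putative factoring map.

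\textbf{Part (i).} I will take $X = X' \cup_{q_1,q_2,q_3} \PP^1$, where $X'$ is a smooth curve of genus $g-2$ glued to a $\PP^1$ component at three nodes $q_1,q_2,q_3$. Because this $\PP^1$ carries three nodes it is neither a rational tail nor a rational bridge, so $\pi_1$ does not contract it. I will place all $n$ marked points on the $\PP^1$, declare $X' \succ \PP^1$ in the level graph, and take $\omega|_{X'}$ to be a holomorphic differential on $X'$ whose divisor $\sum a_i q_i$ has $a_i \geq 0$ summing to $2g-6$ (which is trivial for $g=3$ and can be arranged for larger $g$ by choosing $(X',q_i)$ in the appropriate sublocus), while $\omega|_{\PP^1}=0$. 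The twisted differential $\eta_{\PP^1}$, with poles of orders $a_i + 2$ at the $q_i$ and zeros of orders $m_j$ at the $z_j$, exists uniquely up to scaling on $\PP^1$. The crucial observation is that the global residue condition~(4) in Definition~\ref{def:twistedAbType} is \emph{automatic} by the residue theorem applied to $\PP^1$, since $\eta_{\PP^1}$ has residues only at the $q_i$. As the labelled marked points $z_1,\ldots,z_n$ vary on $\PP^1\setminus\{q_1,q_2,q_3\}$, they sweep out an open subset of $\moduli[0,n+3]$ of dimension $n \geq 2$, producing a positive-dimensional fiber of $\pi_1$ over $(X,\omega)$. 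Since distinct labelled markings give distinct points in $\barmoduli[g,\{n\}]$, the map $\pi_2$ is non-constant on this fiber, so no map $f$ with $f\circ\pi_1=\pi_2$ can exist.

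\textbf{Part (ii).} I will take $X = X_1 \cup_q Y \cup_{q'} X_2$, a chain of three components in which $X_1$ and $X_2$ share no direct node. Declare $X_1$ and $X_2$ on the top level and $Y$ on a lower level, so that the stable differential is a pair of holomorphic differentials on $X_1, X_2$ and vanishes on $Y$. The global residue condition~(4), applied to the two connected components $X_1$ and $X_2$ of $X_{>L}$, forces $\Res_{q^-}\eta_Y = \Res_{q'^-}\eta_Y = 0$; by the residue theorem on $Y$ (when these are the only poles of $\eta_Y$) the second follows from the first, giving a single codimension-one constraint that can be arranged for each $(g,n)$ with $g \geq 3, n \geq 2$ by adjusting component genera and the distribution of marked points. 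The decisive point is that since $X_1$ and $X_2$ share no direct node, condition~(2) imposes no residue-matching constraint linking the independent rescalings $\omega|_{X_i} \mapsto c_i \omega|_{X_i}$. Hence $[c_1:c_2]\in\PP^1$ parametrizes a one-parameter family of distinct stable differentials on the \emph{same} pointed curve $(X,z_1,\ldots,z_n)$, giving a positive-dimensional fiber of $\pi_2$. These stable differentials remain distinct under $\pi_1$ (the restrictions to $X_1$ and $X_2$ differ, independently of any contraction $\pi_1$ might perform on $Y$), so no map $h$ with $h\circ\pi_2=\pi_1$ can exist.

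\textbf{Main obstacle.} The principal technical point in both parts is verifying existence of a compatible twisted differential for \emph{every} $(g,n)$ with $g\geq 3$ and $n\geq 2$. For part~(i) the residue theorem on $\PP^1$ disposes of the global residue condition at no cost, reducing matters to exhibiting a canonical divisor $\sum a_i q_i$ on $X'$, which can be handled by a small case analysis using hyperelliptic Weierstrass configurations for small $g$. For part~(ii) the residue vanishing $\Res_{q^-}\eta_Y=0$ cuts out a proper sublocus in the moduli of $(Y,q,q',\text{marked points on }Y)$, and one must exhibit a nonempty point in this sublocus for each allowed $(g,n)$; the hypotheses $g\geq 3$ (providing enough genus budget to spread across three components) and $n\geq 2$ (allowing at least one marked point to live on $Y$ to supply the required zeros of $\eta_Y$) are exactly what make the construction go through.
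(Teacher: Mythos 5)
Your proof is correct, and the overall strategy (exhibit positive-dimensional fibers of one projection and observe that the other projection separates the points of the fiber) is exactly the paper's. Part~(ii) is essentially the same as the paper's argument: a chain $A - B - C$ with the two ends on top level and the middle $B$ on the bottom, so that the two top-level differentials can be rescaled independently, giving a $\PP^1$-worth of distinct stable differentials over a fixed marked curve. Part~(i) uses a genuinely different curve: the paper glues a genus $g-1$ component to an elliptic curve at a \emph{single} node, puts all markings on the elliptic component, and obtains a fiber of dimension $n-1$ cut out by the linear equivalence $\sum m_i z_i \sim (2g-2)q^-$ on that elliptic curve; your three-node $\PP^1$ avoids any linear equivalence constraint (on $\PP^1$ every degree $-2$ divisor is canonical), so the marked points vary freely and the fiber has dimension $n$. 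Both constructions work for $n \geq 2$ (and yours in fact already establishes non-finiteness of $\pi_1$ for $n \geq 1$).

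The one real gap is that both parts assert existence statements without proof: in part~(i) that $(X',q_1,q_2,q_3)$ can be chosen so that $\sum a_i q_i \sim K_{X'}$, and in part~(ii) that the residue-vanishing condition $\Res_{q^-}\eta_Y = 0$ is actually attainable for every $(g,n)$ with $g\geq 3$, $n\geq 2$. The first is routine, but the second is the crux; the paper handles it by placing \emph{all} marked points on an explicit elliptic middle component, writing down the required divisor $\sum m_i z_i - 2q_1^- - (2g-4)q_2^-$ on it, and citing Boissy's existence theorem for meromorphic differentials with prescribed divisor and vanishing residues. You should either import that reference or work out an explicit configuration (for instance along the lines the paper uses) rather than leave it at ``can be arranged by adjusting component genera and the distribution of marked points,'' since the heuristic that ``$n\geq 2$ supplies a free parameter to kill one residue'' is not by itself a proof that the constraint locus is nonempty.
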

This proposition is not a priori clear. Indeed it uses the full strength of Theorem~\ref{thm:main} about characterizing the boundary points of $\PP\obarmoduliincp{\mu}$. As a result we see that the incidence variety compactification contains more information than both the Hodge bundle compactification and the Deligne-Mumford compactification of the strata.
\begin{proof}
We first prove (i). Suppose $X$ is the union of an elliptic curve $X_{1}$ with a curve~$X_{g-1}$ of genus $g-1$ intersecting at a node $q^{+}\sim q^{-}$ such that $(2g-4)q^{+}\sim K_{X_{g-1}}$ and that $X_1$ contains all the marked points
$z_1, \ldots, z_n$. Put $X_{g-1}$ on a higher level than~$X_1$. The corresponding level graph $\overline{\Gamma}$ of $X$ is represented on the left side of Figure~\ref{cap:relationClosure}. Take a stable differential $\omega$ on $X$ such that
$\omega|_{X_{g-1}}$ is a holomorphic differential with a unique zero at $q^{+}$ of multiplicity $2g-4$, and $\omega|_{X_1}$ is identically zero. Take a twisted differential $\eta$ on $X$ such that $\eta_{X_{g-1}} = \omega|_{X_{g-1}}$ and such that $\eta_{X_1}$ is a meromorphic differential with $\divisor{\eta_{X_1}} = \sum_{i=1}^n m_i z_i - (2g-2) q^{-}$.

One checks that $\eta$ satisfies all the conditions in Definitions~\ref{def:twistedMeroDiff} and~\ref{def:twistedAbType}.
In particular, the global residue condition follows from the residue theorem on $X_1$, because $\eta_{X_1}$ has a unique pole at $q^{-}$, and hence $\Res_{q^{-}} \eta_{X_1} = 0$. Since $\eta$ is compatible with $\overline{\Gamma}$ and $(X, \omega, z_1, \ldots, z_n)$
is the associated pointed stable differential of $\eta$ (see Definition~\ref{def:associated-diff}), by Theorem~\ref{thm:main}
$(X, \omega, z_1,\ldots, z_n)$ is contained in the incidence variety compactification $\PP\obarmoduliincp{\mu}$.
It implies that the $\pi_1$-preimage of the stable differential $(X, \omega)$
is isomorphic to
$$\Bigl\{(z_{1},\ldots,z_{n})\in (X_{1})^{n}\setminus\Delta:\sum_{i=1}^n m_{i}z_{i} \= (2g-2)q^{-}\Bigr\} \, / \mathfrak{S}\,,$$
where $\Delta$ is the big diagonal parameterizing the tuples where at least two marked points
coincide. This preimage has dimension $n-1$. Since the $\pi_2$-image retains the information about the positions
of the marked points, this example implies that there does not exist a map $f$ such that $f\circ\pi_{1}=\pi_{2}$.
\par
\begin{figure}[h]
\begin{tikzpicture}[scale=1,decoration={
    markings,
    mark=at position 0.5 with {\arrow[very thick]{>}}}]
\fill (0,0) coordinate (y1) circle (2pt);  \node [above] at (y1) {$X_{g-1}$};\node [below right] at (y1) {$q^{+}$};
\fill (0,-1.5) coordinate (y2) circle (2pt); \node [ below] at (y2) {$X_{1}$};\node [above right] at (y2) {$q^{-}$};

 \draw[postaction={decorate}] (y1) -- (y2);

\fill (5,0) coordinate (x3) circle (2pt);\node [above] at (x3) {$X_{g-2}$};\node [below ] at (x3) {$q_{2}^{+}$};
\fill (3,0) coordinate (x1) circle (2pt); \node [above] at (x1) {$X_{1}$};\node [below ] at (x1) {$q_{1}^{+}$};
\fill (4,-1.5) coordinate (x2) circle (2pt);  \node [ below] at (x2) {$X_{2}$};\node [ left] at (x2) {$q_{1}^{-}$};\node [ right] at (x2) {$q_{2}^{-}$};

 \draw[postaction={decorate}] (x3) .. controls (4.1,-.7) .. (x2);
 \draw[postaction={decorate}] (x1) .. controls (3.9,-.7) ..  (x2);
\end{tikzpicture}
\caption{The level graphs used in the proof of Proposition~\ref{prop:relationClosures}}\label{cap:relationClosure}
\end{figure}
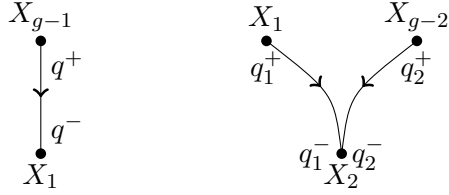
\par
Next we prove (ii). Let $X$ be the union of two elliptic curves $X_{1}$, $X_{2}$ and a curve $X_{g-2}$ of genus $g-2$, whose dual graph with a chosen full order $\overline{\Gamma}$ is represented on the right side of Figure~\ref{cap:relationClosure}. In this level graph $X_1$ and $X_{g-2}$ are on the top level, both higher than $X_2$. Further suppose that $(2g-6)q_{2}^{+}\sim K_{X_{g-2}}$ and that the points $z_{1},\ldots,z_{n}\in X_{2}$ are chosen such that there exists a meromorphic differential $\eta_{X_2}$ on $X_2$ with
$ {\rm div}(\eta_{X_2}) = \sum_{i=1}^n m_i z_i - 2q_{1}^{-} - (2g-4) q_{2}^{-}$
and such that $\Res_{q_1^-}\eta_{X_2} = \Res_{q_2^-}\eta_{X_2} = 0$. The existence of such $\eta_{X_2}$ is proved in \cite{boissy}.
Then $(X, z_1, \ldots, z_n)$ specifies a point in $\barmoduli[g,\left\{n\right\}]$.
Take a twisted differential $\eta$ on $X$ such that $\eta_{X_1}$ on $X_1$ is holomorphic and nowhere vanishing,
$\eta_{X_2}$ is given as above on $X_2$, and $\eta_{X_{2g-2}}$ on $X_{2g-2}$ satisfies ${\rm div}(\eta_{X_{2g-2}}) = (2g-6)q_{2}^{+}$.

One checks that $\eta$ satisfies all the required conditions to be compatible with $\overline{\Gamma}$. In particular, the global residue condition holds because $\eta_{X_2}$ has zero residues at both $q_1^-$ and~$q_2^-$. Hence
by Theorem~\ref{thm:main} the associated pointed stable differential
$(X, \omega, z_1,\ldots, z_n)$ is contained in the incidence variety compactification $\PP\obarmoduliincp{\mu}$,
where $\omega|_{X_1} = \eta_{X_1}$, $\omega|_{X_{g-2}} = \eta_{X_{g-2}}$, and $\omega$ is identically zero on $X_2$.
Note that one can scale $\eta$ on the top level components $X_{1}$ and $X_{g-2}$
by a pair of nonzero scalars $(\lambda_1, \lambda_{g-2})$, which does not affect its compatibility with $\overline{\Gamma}$. The associated stable differential $\omega$ is scaled accordingly on $X_{1}$ and $X_{g-2}$, but the underlying marked curve
$(X, z_1, \ldots, z_n)$ remains the same. In other words, the $\pi_2$-preimage of $(X,z_{1},\ldots,z_{n})$ in the incidence variety compactification contains the space of the projectivized pairs $[\lambda_1, \lambda_{g-2}]$, which is one-dimensional. Since the image of $(X, \omega, z_1,\ldots, z_n)$ under $\pi_1$ retains the scaling information, there does not exist a map $h$ such that $h\circ\pi_{2}=\pi_{1}$ holds.
\end{proof}

\begin{rem} \label{rem:noninj}
Conceptually speaking, the map $\pi_{2}$ fails to be injective for two reasons. First, for a component of $X$, there may exist a nontrivial linear equivalence relation between the marked points in that component. For example, let $X$ be the union of a hyperelliptic curve $Y$ of genus $g-1$, and a $\PP^1$ component, intersecting at two points $q_1$ and $q_2$ which are Weierstra\ss{} points of $Y$. Moreover suppose that all the marked points are contained in $\PP^1$.  Then $(X, z_1, \ldots, z_n)$
can be the image of $(X,\omega,z_1, \ldots, z_n)$ under $\pi_2$, where $\omega$ is identically zero on $\PP^{1}$ and restricts to $Y$ as a differential with a zero of order $2k$ at $q_{1}$ and a zero of order $2(g-k-2)$ at $q_{2}$, for any $k\in\lbrace 0, \ldots, g-2\rbrace$. The other reason is that some scaling factors for the differentials on the top level components are lost, as discussed in the second part of the proof of Proposition~\ref{prop:relationClosures} (also see Lemma~\ref{lm:dimFibrePiDeux} below).
\end{rem}

When we analyze the examples presented in Section~\ref{sec:examples}, the information of the dimension of fibers of $\pi_{2}$
will play a significant role. Hence we conclude this section by the following observation.
\begin{lm}\label{lm:dimFibrePiDeux}
Let $\DC$ be an open boundary stratum of $\barmoduli[g,n]$ parameterizing nodal curves with a given dual graph.
Let $(X,z_{1},\ldots,z_{n})$ be a curve in the intersection of the locus $\pi_{2}(\PP\obarmoduliinc{\mu})$ with $\DC$. Then the dimension of the fiber of $\pi_{2}$ over $(X,z_{1},\ldots,z_{n})$ is one less than the maximal number of connected components of the graph $\Gamma_{=0}$, where the maximum is taken over all level graph structures $\overline\Gamma$ on $\Gamma$,  such that there exists a compatible twisted differential of type $\mu$ on $(X,z_{1},\ldots,z_{n})$.
\end{lm}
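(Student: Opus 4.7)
The strategy is to combine Theorem~\ref{thm:main} with Remark~\ref{rem:finitechoice} to decompose the fiber as a finite union of loci of computable dimension, and then take the maximum.

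First, I would use Theorem~\ref{thm:main} together with Definition~\ref{def:associated-diff} to express
\[\pi_2^{-1}(X,z_1,\ldots,z_n)\=\bigcup_{\overline\Gamma}F_{\overline\Gamma},\]
where the union runs over those level graph structures $\overline\Gamma$ on the dual graph $\Gamma$ of $X$ admitting a compatible twisted differential of type~$\mu$, and $F_{\overline\Gamma}$ is the set of projective classes $[\omega]$ associated to some such $\eta$. Since this union is finite, it suffices to prove $\dim F_{\overline\Gamma}=c(\overline\Gamma)-1$ for every admissible $\overline\Gamma$, where $c(\overline\Gamma)$ denotes the number of connected components of $\Gamma_{=0}$; the fiber dimension is then the desired maximum.

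Next, I would fix $\overline\Gamma$ and parametrize $F_{\overline\Gamma}$. Since $\omega$ vanishes off the top level and equals $\eta_v$ on each top-level component $X_v$, the projective class $[\omega]$ depends only on the scalings $\lambda_v\in\CC^*$ of $\eta_v$ for $v\in V(\Gamma_{=0})$. By Remark~\ref{rem:finitechoice}, once the discrete data of node orders is fixed, $\eta$ has no additional continuous freedom beyond the per-component scalings, and scalings at strictly lower levels are invisible to $\omega$. To count constraints on $(\lambda_v)_{v\in V(\Gamma_{=0})}$, I would observe that every edge of $\Gamma_{=0}$ represents a node identifying $q_1\in X_{v_1}$ with $q_2\in X_{v_2}$ with $v_1\asymp v_2$, so by condition~(3) we have $\ord_{q_i}\eta_{v_i}=-1$ for $i=1,2$, and in particular both residues are nonzero (an order-$-1$ pole has, by definition, a nontrivial principal part). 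Condition~(2) applied to the rescaled differential then collapses to $\lambda_{v_1}=\lambda_{v_2}$. All remaining compatibility conditions are preserved by arbitrary rescaling of top-level components: (0), (1), (3) are order-theoretic, and (4) at any level $L<0$ involves only residues of $\eta_v$ for $v\in V(\overline\Gamma_{=L})$. Hence the admissible scalings form a torus $(\CC^*)^{c(\overline\Gamma)}$ --- one factor per connected component of $\Gamma_{=0}$ --- and projectivization yields $\dim F_{\overline\Gamma}=c(\overline\Gamma)-1$.

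The main technical subtlety, which I expect to be the chief obstacle, is verifying sharpness: one must check that every such rescaling genuinely extends to a compatible twisted differential on all of $X$, i.e., that the assumed existence of one compatible $\eta$ produces the full $c(\overline\Gamma)$-dimensional family by this procedure without having to modify anything at lower levels. This reduces to the observation already used, namely that rescaling only top-level components leaves every condition at or below each level $L<0$ untouched. Taking the maximum of $c(\overline\Gamma)-1$ over admissible $\overline\Gamma$ then yields the stated formula.
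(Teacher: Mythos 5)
Your proof is correct and follows essentially the same approach as the paper's: both identify that the fiber is determined by the per-component scaling factors of a compatible twisted differential on top level, observe that the matching-residue condition at a node joining two top-level components (necessarily a simple pole on both sides, hence with nonzero residue) forces equal scaling along edges of $\Gamma_{=0}$, and conclude that the fiber is a finite union of tori of projective dimension $c(\overline\Gamma)-1$, maximized over admissible level graphs. Your writeup is somewhat more explicit than the paper's (you spell out why rescaling top-level components leaves conditions (0), (1), (3), (4) at lower levels unaffected, and you invoke Remark~\ref{rem:finitechoice} to justify the finiteness of the decomposition), but the substance is the same.
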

\begin{proof}
Take a twisted differential $\eta$ on $X$ compatible with a chosen level graph $\overline{\Gamma}$. Suppose $(X, \omega, z_1, \ldots, z_n)$ is the pointed stable differential associated to $(\eta,\overline{\Gamma})$. Then~$\omega$ is equal to $\eta$ restricted to all top level components, and is identically zero elsewhere. The image of $(X, \omega, z_1, \ldots, z_n)$ under $\pi_2$ then only retains the information on the zeroes of $\omega$ on all the irreducible components of top level, and thus loses the information of the individual scaling factors
of $\eta$ on each such irreducible component. However, if two such top level irreducible components are connected by an edge, the matching residue condition at the corresponding node prescribes that the scale of $\eta$ on these two components is equal, and thus there is only one scale parameter lost for each connected component of the graph of the top level components. The space of such scaling factors has projective dimension equal to
the number of top level components minus one. The desired conclusion thus follows from applying this analysis to all possible level graphs and compatible twisted differentials.
\end{proof}

\section{Examples of the incidence variety compactification }\label{sec:examples}
To illustrate all the aspects of the incidence variety compactification, in
this section we study many examples. We start with examples discussing possible choices of level graphs and
compatible twisted differentials for a fixed dual graph, thus showing that there are indeed choices involved, and that all our data are necessary. We then describe in detail the incidence variety compactification for a number of strata in low genus.

Throughout this section, we use the following notation. Denote by $K_{X_i}$ the canonical line bundle of an irreducible component $X_i$ of a nodal curve $X$. A node joining two irreducible components $X_{i}$ and~$X_{j}$ is denoted by $q_{k}$. Moreover, if $X_{i}\succ X_{j}$, then the node $q_{k}$ is obtained by identifying the points $q_{k}^{+}\in X_{i}$ with  $q_{k}^{-}\in X_{j}$. We denote by $\Gamma$ the dual graph of $X$, and by $\overline{\Gamma}$ a full order on $\Gamma$. For a twisted differential $\eta$ and a stable differential~$\omega$ on~$X$, we use $\eta_i$ and $\omega_i$ to denote their restrictions to the component $X_i$, respectively. We also remind the reader to review Definition~\ref{def:associated-diff} for $\eta$ and $\omega$ being associated with each other.

\subsection{Cautionary examples} \label{sec:cautionary}
We present some examples that serve as an illustration for
our formulation of the conditions on twisted differentials
as well as a warning regarding the extent to which the choices of~$\overline\Gamma$ and $\eta$
determine each other.
\begin{exa}({\em Twisted differentials do not automatically satisfy the global residue condition})\label{exa:GRCneeded}
  Let $X$ be a curve with three components $X_{1}\asymp X_{2}\succ X_{3}$ as represented in Figure~\ref{cap:GRCneeded}. Suppose that $X_1$ and $X_2$ contain no marked poles, and suppose that $g_3=0$. Suppose that the only marked point
  on~$X_3$ is a marked zero~$z_1$, i.e.~$m_1>0$.
\begin{figure}[ht]
\begin{tikzpicture}[scale=1,decoration={
    markings,
    mark=at position 0.5 with {\arrow[very thick]{>}}}]
\draw (-4,0.1) coordinate (x1) .. controls (-3.8,-.3) and (-4.2,-.6) .. (-4,-1.1) coordinate (q1) coordinate [pos=.2] (z1) coordinate [pos=.35] (z2) coordinate [pos=.65] (z3);
\fill (z2) circle (1pt);\fill (z3) circle (1pt);\fill (z1) circle (1pt);
\draw (-2,0.1)  coordinate (x2).. controls (-1.8,-.3) and (-2.2,-.6) .. (-2,-1.1) coordinate[pos=.7] (q2) coordinate [pos=.65] (z1) coordinate [pos=.35] (z2) ;
\fill (z2) circle (1pt);\fill (z1) circle (1pt);

\draw (-4.2,-1) .. controls (-3.3,-1.2) and (-2.6,-.8) .. (-1.8,-1) coordinate (x3)coordinate [pos=.5] (z1);
\fill (z1) circle (1pt);\node[above] at (z1) {$z_{1}$};
\node [right] at (x1) {$X_{1}$};\node [right] at (x2) {$X_{2}$};\node [right] at (x3) {$X_{3}$};\node [above right] at (q1) {$q_{1}$};\node [left] at (q2) {$\, q_{2}$};

\fill (0,0) coordinate (x1) circle (2pt);\node [left] at (x1) {$X_{1}$};
\fill (3,0) coordinate (x2) circle (2pt); \node [right] at (x2) {$X_{2}$};
\fill (1.5,-1) coordinate (x3) circle (2pt);
\node [above] at (x3) {$X_{3}$};

 \draw[postaction={decorate}] (x1) -- (x3);
 \draw[postaction={decorate}] (x2) -- (x3);

\end{tikzpicture}
\caption{The curve and level graph used in Example~\ref{exa:GRCneeded} }\label{cap:GRCneeded}
\end{figure}
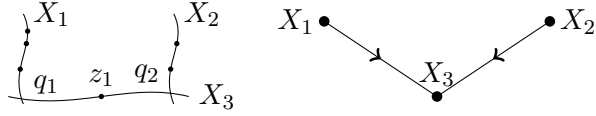
\par
Let $\eta$ be a twisted differential on $X$ compatible with this level graph. Since $X_3$ is on the bottom level, $\eta_3$ has poles
at $q_{1}^{-}$ and $q_{2}^{-}$. Let $r_{1}$ and $r_{2}$ be the residues of $\eta_{3}$ at $q_{1}^{-}$ and $q_{2}^{-}$, respectively. The residue theorem on $X_3\cong \mathbb{P}^1$ says that $r_1+r_2=0$, with no further constraints. However, the global residue condition applied to the level of $X_3$ implies that $r_1=0$ and $r_2=0$, which does not follow from the relation $r_1+r_2=0$.

\end{exa}
\par
\begin{exa}\label{exa:etanotunique}
({\em Non-uniqueness of associated twisted differentials}) Theorem~\ref{thm:main} says that a pointed stable differential lies in the incidence variety compactification of a given stratum if and only if there exists an associated twisted differential of the given type compatible with certain level graph
 (see Definition~\ref{def:associated-diff}). However, such a twisted differential may not be unique (modulo scaling), even for a fixed level graph. For example, suppose $X$ has three irreducible components $X_1\succ X_2\succ X_3$, where $X_1$ intersects $X_2$ at one point~$q_{1}$, and $X_2$ and $X_3$ intersect at two points $q_2$ and $q_3$, and suppose all marked points $z_1, \ldots, z_n$ lie on~$X_3$, see Figure~\ref{cap:etanotunique}.
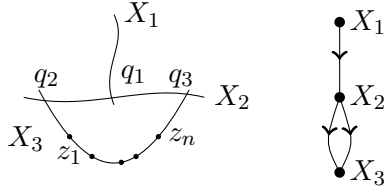
\begin{figure}[ht]
\begin{tikzpicture}[scale=1,decoration={
    markings,
    mark=at position 0.5 with {\arrow[very thick]{>}}}]
\draw (-3,0.1) coordinate (x1) .. controls (-2.8,-.3) and (-3.2,-.6) .. (-3,-1.1) coordinate[pos=.9] (q1);
\draw (-4.2,-1)  coordinate (q2) .. controls (-3.3,-1.2) and (-2.6,-.8) .. (-1.8,-1) coordinate (x3);
\draw (-4,-0.9) .. controls (-3.3,-2.2) and (-2.7,-2.2) .. (-2,-.9)
node [pos=.1,below left]{$X_{3}$} coordinate [pos=.2] (z1) coordinate [pos=.35] (z2) coordinate [pos=.65] (z3) coordinate [pos=.8] (z4) coordinate [pos=.55] (z5);

\node [right] at (x1) {$X_{1}$};\node [right] at (x3) {$X_{2}$};
\node [above right] at (q1) {$q_{1}$};\node [above right] at (q2) {$q_{2}$};\node [above left] at (x3) {$q_{3}$};

\fill (z1) circle (1pt); \node [below] at (z1) {$z_{1}$};
\fill (z2) circle (1pt);\fill (z3) circle (1pt);\fill (z5) circle (1pt);
\fill (z4) circle (1pt); \node [right] at (z4) {$z_{n}$};

\fill (0,0) coordinate (x1) circle (2pt);\node [right] at (x1) {$X_{1}$};
\fill (0,-1) coordinate (x2) circle (2pt); \node [right] at (x2) {$X_{2}$};
\fill (0,-2) coordinate (x3) circle (2pt); \node [right] at (x3) {$X_{3}$};

 \draw[postaction={decorate}] (x1) -- (x2);
 \draw[postaction={decorate}] (x2) ..controls (.2,-1.6)  ..  (x3);
 \draw[postaction={decorate}] (x2) ..controls (-.2,-1.6)  ..  (x3);
\end{tikzpicture}
\caption{The curve and level graph used in Example~\ref{exa:etanotunique} }\label{cap:etanotunique}
\end{figure}
\par

Suppose $\eta$ is a twisted differential compatible with the corresponding level graph $\overline{\Gamma}$. Because
$X_1$ has no marked points and it is on the top level,  $\eta_1$ on $X_1$ is holomorphic, and it has a unique zero at $q_{1}^{+}$ whose order is $2g_1-2$. Moreover, $\eta_3$ on $X_3$ has all the prescribed zeros or poles at $z_i$, hence the sum
of its pole orders at $q_2^-$ and $q_3^-$ is equal to $2g-2g_3$. Suppose that
$\eta_3$ has a pole of order $k$ at $q_2^-$ and a pole of order $2g-2g_3-k$ at $q_3^-$. Then
$\eta_2$ as a differential on $X_2$ has a pole of order $2g_1$
at $q_{1}^-$, and has zeros of orders $k-2$ and $2g-2g_3-k-2$ at $q_2^+$ and $q_3^+$,
respectively. Finally applying the global residue condition to each level of $\overline{\Gamma}$, respectively, it says that the sum of residues of $\eta_i$ on each irreducible component $X_i$ is zero, which follows from the residue theorem. Hence in this case the global residue condition imposes no further constraints on $\eta$.

By the above analysis, there exists a twisted differential $\eta$ compatible with
$\overline{\Gamma}$ if and only if the pointed curve $(X,z_{1},\ldots,z_{n})$
satisfies
\bas
K_{X_1}& \sim (2g_1-2)\;q_{1}^{+}, \\
K_{X_2}& \sim (k-2) q_2^{+}\+(2g-2g_3-k-2)q_3^{+}\,-\,2g_1 q_{1}^{-}, \\
K_{X_3}&\sim \sum_{i=1}^n m_i z_i\,-\,k q_2^{-} \,-\,(2g-2g_3-k) q_3^{-}.
\eas
For special curves $X$ these conditions can be satisfied for different values of~$k$. For instance, take
all $X_i$ to be hyperelliptic curves of high genus, all $q_i^{\pm}$ to be Weierstra\ss{} points, and $k$ to be even. In that case we obtain a number of distinct twisted differentials~$\eta$, all of which are the same on $X_1$ (modulo scaling) but are different on~$X_2$ and~$X_3$ (even after modulo scaling).

Suppose $\omega$ is the associated stable differential of $\eta$. Then $\omega_1 = \eta_1$ is determined on the top level component $X_1$, which is the same for all the different choices of $\eta$ (modulo scaling). Nevertheless, $\omega_2$ and $\omega_3$ are identically zero on the lower level components $X_2$ and $X_3$, respectively. Hence in this case different values of $k$ in the above give rise to distinct twisted differentials $\eta$, but the associated stable differential $(X, \omega, z_1, \ldots, z_n)$ remains to be the same.
\end{exa}

\begin{exa}\label{exa:nopartialorder}({\em Pointed stable differentials do
not determine the level graph})  The previous example shows that there may be many different twisted differentials that are associated with a given pointed stable differential in the incidence variety compactification. We now show that a pointed stable differential $(X, \omega, z_1, \ldots, z_n)$ does not necessarily determine a full order on the dual graph. The reason is that $\omega$ is identically zero
on the lower level components of $X$, hence the lower level components may be ordered differently. An example illustrated in Figure~\ref{cap:nopartialorder} is given by a triangular subgraph on lower levels, attached to a top level component $X_1$. The only two marked points are $z_1\in X_3$ and $z_2\in X_4$. The two different level graphs are obtained by switching the ordering of $X_3$ and $X_4$ on the bottom two levels.
\begin{figure}[ht]
\begin{tikzpicture}[scale=1,decoration={
    markings,
    mark=at position 0.5 with {\arrow[very thick]{>}}}]

\draw (-3,0.1) coordinate (x1) .. controls (-2.8,-.3) and (-3.2,-.6) .. (-3,-1.1) coordinate [pos=.8] (q1);
\draw (-4.5,-1) .. controls (-3.3,-1.2) and (-2.6,-.8) .. (-1.5,-1) coordinate (x3);

\draw (-4.3,-.8) coordinate (x3).. controls (-3.8,-1.5) and (-3.2,-2) .. (-2.9,-2.6) coordinate (q4)
coordinate [pos=.5] (z1);
\draw (-1.7,-.8) coordinate (x4).. controls (-2,-1.2) and (-2.6,-1.6) .. (-3.1,-2.6)
coordinate [pos=.5] (z2);

\node [left] at (z1) {$X_{3}$};\node [above right] at (x4) {$X_{4}$};
\fill (z1) circle (1pt); \node [right] at (z1) {$z_{1}$};
\fill (z2) circle (1pt); \node [right] at (z2) {$z_{2}$};
\node [right] at (q1) {$q_{1}$};\node [right] at (x3) {$q_{2}$};
\node [left] at (x4) {$q_{3}$};\node [right] at (q4) {$q_{4}$};

\node [right] at (x1) {$X_{1}$};\node [below left] at (x3) {$X_{2}$};

\fill (0,0) coordinate (x1) circle (2pt);\node [right] at (x1) {$X_{1}$};
\fill (0,-1) coordinate (x2) circle (2pt); \node [right] at (x2) {$X_{2}$};
\fill (1,-2) coordinate (x3) circle (2pt); \node [below] at (x3) {$X_{4}$};
\fill (-1,-3) coordinate (x4) circle (2pt); \node [left] at (x4) {$X_{3}$};

 \draw[postaction={decorate}] (x1) -- (x2);
 \draw[postaction={decorate}] (x2) -- (x3);
 \draw[postaction={decorate}] (x2) -- (x4);
\draw[postaction={decorate}]  (x3) -- (x4);

\fill (3,0) coordinate (x1) circle (2pt);\node [right] at (x1) {$X_{1}$};
\fill (3,-1) coordinate (x2) circle (2pt); \node [right] at (x2) {$X_{2}$};
\fill (4,-3) coordinate (x3) circle (2pt); \node [right] at (x3) {$X_{4}$};
\fill (2,-2) coordinate (x4) circle (2pt); \node [below] at (x4) {$X_{3}$};

 \draw[postaction={decorate}] (x1) -- (x2);
 \draw[postaction={decorate}] (x2) -- (x3);
 \draw[postaction={decorate}] (x2) -- (x4);
\draw[postaction={decorate}] (x4) -- (x3);
\end{tikzpicture}
\caption{The curve with two different level graphs used in Example~\ref{exa:nopartialorder}}\label{cap:nopartialorder}
\end{figure}
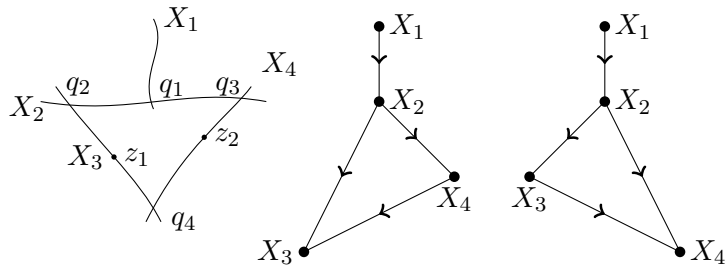

Suppose the genera of $X_1$ and $X_2$ are sufficiently high so that  $m_1>2g_3-2$ and $m_2>2g_4-2$.
Take a twisted differential $\eta$ such that
\bas
{\rm div}(\eta_1) & = (2g_1-2) q_1^+,  \\
 {\rm div}(\eta_2) & = k q_2^{+} + (2g_2+2g_1-k-2)q_3^{+} - 2g_{1} q_{1}^{-},  \\
 {\rm div}(\eta_3) & = m_1 z_1+(2g_3+k-m_1) q_4 - (k+2) q_2^{-}, \\
  {\rm div}(\eta_4) & = m_2 z_2+(k-2g_1-2g_2) q_3^{-}+(m_1-2-k-2g_3) q_4,
\eas
where $0\leq k \leq 2g_2+2g_1-2$. The existence of such $\eta$ is equivalent to the linear equivalence conditions
${\rm div}(\eta_i) \sim K_{X_i}$ for all $i$. One can take $X_i$ to be hyperelliptic curves, $z_i$ and $q_j$ to be Weierstra\ss{} points, $m_i$ and $k$ to be even, so that these conditions hold.
In that case, if we choose suitable $k$ such that $2g_3+k-m_1 > -1$, then $\eta_3$ is holomorphic at $q_4$ on $X_3$, hence $\eta$ is compatible with the level graph on the right side of Figure~\ref{cap:nopartialorder}, where the global residue condition follows from the residue theorem
on each lower level component. Conversely if $2g_3+k-m_1 < -1$, then $\eta_3$ has a higher order pole at $q_4$ on~$X_3$, hence $\eta$ is compatible with the level graph in the middle of Figure~\ref{cap:nopartialorder}. Nevertheless, in both cases the associated stable differentials $\omega$ are the same (modulo scaling on the top level component $X_1$), because $\omega$ is identically zero on the components $X_2$, $X_3$, and~$X_4$.
\end{exa}

\begin{exa}\label{exa:differentGRCIVC}
({\em Different level graphs give different global residue conditions})
Consider two level graphs $\overline{\Gamma}_{1}$ and $\overline{\Gamma}_{2}$ with the same underlying dual graph $\Gamma$ of a curve $X$, presented in Figure~\ref{cap:Order}. The orientation of the edges is going downwards.
Because $X_5$ and $X_6$ are disjoint, $\overline{\Gamma}_{1}$ and $\overline{\Gamma}_{2}$ determine the same partial order on $\Gamma$. Further suppose there is no marked pole in the smooth locus of $X$.
\par
\begin{figure}[ht]
\begin{tikzpicture}[scale=1]

\fill (0,0) coordinate (x1) circle (2pt);\node [above] at (x1) {$X_{1}$};
\fill (1.5,0) coordinate (x2) circle (2pt); \node [above] at (x2) {$X_{2}$};
\fill (3,0) coordinate (x3) circle (2pt); \node [above] at (x3) {$X_{3}$};
\fill (.5,-1) coordinate (y1) circle (2pt);\node [below] at (y1) {$X_{4}$};
\fill (2.5,-1) coordinate (y2) circle (2pt);\node [below] at (y2) {$X_{5}$};
\fill (1.5,-2) coordinate (y3) circle (2pt); \node [below] at (y3) {$X_{6}$};

 \draw[] (x1) -- (y1);
 \draw[] (x1)  --  (y2);
 \draw[] (x2)  --  (y2);
 \draw[] (x2) -- (y3);
  \draw[] (y1) -- (y3);
 \draw[] (x3) -- (y1);
 \draw[] (x3)  --  (y2);

 \coordinate (a) at (5,0);
\fill (a)+(0,0) coordinate (x1) circle (2pt);\node [above] at (x1) {$X_{1}$};
\fill (a)+(1.5,0) coordinate (x2) circle (2pt); \node [above] at (x2) {$X_{2}$};
\fill (a)+(3,0) coordinate (x3) circle (2pt); \node [above] at (x3) {$X_{3}$};
\fill (a)+(.5,-1) coordinate (y1) circle (2pt);\node [below] at (y1) {$X_{4}$};
\fill (a)+(2.5,-2) coordinate (y2) circle (2pt);\node [below] at (y2) {$X_{5}$};
\fill (a)+(1.5,-2) coordinate (y3) circle (2pt); \node [below] at (y3) {$X_{6}$};

 \draw[] (x1) -- (y1);
 \draw[] (x1)  --  (y2);
 \draw[] (x2)  --  (y2);
 \draw[] (x2) -- (y3);
  \draw[] (y1) -- (y3);
 \draw[] (x3) -- (y1);
 \draw[] (x3)  --  (y2);
\end{tikzpicture}
 \caption{Two level graphs $\overline{\Gamma}_{1}$ and $\overline{\Gamma}_{2}$ used in Example~\ref{exa:differentGRCIVC}} \label{cap:Order}
\end{figure}

We will now determine the conditions necessary for a twisted differential $\eta$ to be compatible with $\overline{\Gamma}_{1}$ or $\overline\Gamma_2$, respectively. For a node $q_k$ joining $X_i$ and $X_j$ with $X_i\succ X_j$, we denote by $r_{i,j}$ the residue of~$\eta_j$ at~$q_{k}^-$. In both cases $\eta_1$,  $\eta_2$, and $\eta_3$ are holomorphic, and
$\eta_4$,  $\eta_5$, and $\eta_6$ are meromorphic. Applying the residue theorem to $\eta_i$ on $X_i$ for $i = 4, 5, 6$, we obtain that
$$r_{1,4}+r_{3,4}=0, \quad r_{1,5}+r_{2,5}+r_{3,5}=0, \quad r_{2,6}+r_{4,6}=0. $$

The global residue condition can be imposed at the bottom level and at the middle level of $\overline{\Gamma}_{i}$. For $\overline\Gamma_1$ the bottom level global residue condition implies that
$$
 r_{2,6}+r_{4,6}=0
$$
(since the graph above the bottom level is connected), which follows from the residue theorem. The global residue condition applied to the middle level
of $\overline\Gamma_1$ implies that
$$
 r_{1,4}+r_{1,5}\=0,  \quad r_{2,5}\=0, \quad r_{3,4}+r_{3,5}\=0
$$
where the third condition follows from the first two and the residue theorem. Thus for~$\overline\Gamma_1$ the global residue condition gives two extra relations among the residues of $\eta$, in addition to the residue theorem. In particular, it implies that $r_{2,5}=0$.

For $\overline\Gamma_2$ the global residue condition applied to the bottom level implies that
$$
 r_{4,6}+r_{1,5}+r_{3,5}\=0, \quad r_{2,5}+r_{2,6}\=0
$$
(since the graph above the bottom level has two connected components, one being $X_2$, and the other everything else), where the two conditions are equivalent by the residue theorem. The global residue condition applied to the middle level of $\overline\Gamma_2$ implies that $r_{1,4}=0$
(or equivalently, $r_{3,4}=0$, by the residue theorem). Thus for $\overline\Gamma_2$ the global residue condition gives two extra relations among the residues of $\eta$, in addition to the residue theorem. In particular, it implies that $r_{1,4}=0$, which is different from the conditions imposed by $\overline\Gamma_1$ in the preceding paragraph.
\end{exa}

\begin{exa}\label{exa:nonLinearResidue}
({\em The space of residues cut out by the global residue condition})
In this example we illustrate how the scaling factors of a twisted differential on lower level components of $X$ come into play in the global residue condition. Consider the curve $X$ with the level graph $\overline{\Gamma}$ in Figure~\ref{cap:nonLinearResidue}. Suppose there is no marked pole in the smooth locus of $X$.
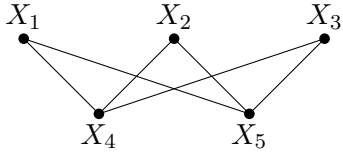
\begin{figure}[ht]
\begin{tikzpicture}

\fill (0,0) coordinate (x1) circle (2pt);\node [above] at (x1) {$X_{1}$};
\fill (2,0) coordinate (x2) circle (2pt); \node [above] at (x2) {$X_{2}$};
\fill (4,0) coordinate (x3) circle (2pt); \node [above] at (x3) {$X_{3}$};

\fill (1,-1) coordinate (y1) circle (2pt);  \node [below] at (y1) {$X_{4}$};
\fill (3,-1) coordinate (y2) circle (2pt); \node [below] at (y2) {$X_{5}$};

 \draw(x1) -- (y1);
 \draw (x1) -- (y2);
 \draw (x2) -- (y1);
 \draw (x2) -- (y2);
  \draw (x3) -- (y1);
 \draw (x3) -- (y2);
\end{tikzpicture}
 \caption{The curve and level graph used in Example~\ref{exa:nonLinearResidue}} \label{cap:nonLinearResidue}
\end{figure}
\par
Let $\eta$ be a twisted differential on $X$ whose associated stable differential is $\omega$. Suppose further that $\eta$ satisfies conditions (0)--(3) with respect to $\overline{\Gamma}$ (see Definitions~\ref{def:twistedMeroDiff} and~\ref{def:twistedAbType}). Take a pair of scalars $\lambda = (\lambda_4, \lambda_5)$. Define a twisted differential
$$\eta(\lambda) \= \{ \eta_1, \eta_2, \eta_3, \lambda_4 \eta_4, \lambda_5 \eta_5\}. $$
Because $X_4$ and $X_5$ are the lower level components, the associated stable differential of $\eta(\lambda)$ is $\omega$, and $\eta(\lambda)$
also satisfies conditions (0)--(3) with respect to $\overline{\Gamma}$. Conversely, any twisted differential with associated stable differential equal to $\omega$ is of the type $\eta(\lambda)$. By Theorem~\ref{thm:main}, $(X, \omega, z_1, \ldots, z_n)$ is contained in the incidence variety compactification if and only if there exists one such $\eta(\lambda)$ satisfying the global residue condition.

Use the notation $r_{i,j}$ for residues of $\eta$ as
in the preceding example. Then the residues of $\eta(\lambda)$, compared to $\eta$, are multiplied by $\lambda_4$ and $\lambda_5$ on $X_4$ and $X_5$, respectively.
Applying the residue theorem to $X_4$ and $X_5$ gives
$$
 r_{1,4}+r_{2,4}+r_{3,4}\=0, \quad r_{1,5}+r_{2,5}+r_{3,5}\=0.
$$
The global residue condition applied to $\eta(\lambda)$ and $\overline{\Gamma}$ imposes the conditions
$$
 \lambda_4 r_{1,4}+ \lambda_5 r_{1,5}\=0, \quad \lambda_4 r_{2,4}+\lambda_5 r_{2,5}\=0, \quad \lambda_4 r_{3,4}+ \lambda_5 r_{3,5}\=0.
$$
By the residue theorem we can express $r_{3,4}$ and $r_{3,5}$ in terms of the other residues, hence the imposed conditions reduce to
\[\left\{\begin{matrix}
\lambda_{4}r_{1,4} + \lambda_{5}r_{1,5} \= 0, \\
\lambda_{4}r_{2,4} + \lambda_{5}r_{2,5} \= 0.
\end{matrix}
\right.
\]
Eliminating $\lambda_4$ and $\lambda_5$ in the above further reduces the conditions to
\[r_{1,4}r_{2,5}-r_{2,4}r_{1,5}\=0.\]
Hence there exists a twisted differential $\eta(\lambda)$ compatible with $\overline{\Gamma}$ only if the residues of~$\eta$ satisfy this \emph{quadratic} equation. This phenomenon will be addressed in full generality in our forthcoming work \cite{BCGGM2}, where we study in detail the boundary structure of the incidence variety compactification.
\end{exa}

\subsection{Some preliminary results}\label{subsec:prelim}

We first remark that the incidence variety compactification $\PP\obarmoduliincp{m_{1},\ldots,m_{n}}$ of any stratum has divisorial boundary (i.e.,~the boundary is of complex codimension one in the compactification). This follows from the fact that $\barmoduli[g,n]$ and the projectivized Hodge bundle have divisorial boundary, and this is where we take the closure of the stratum. In this section we describe explicitly the boundary of $\PP\obarmoduliincp{m_{1},\ldots,m_{n}}$ for a few cases. The general method is as follows. First, we enumerate the boundary strata in $\barmoduli[g,n]$ parameterizing stable curves with a given dual graph. For each such stratum, we investigate all possible level graphs. Then we write down the space of all possible twisted differentials $\eta$ that are compatible with such a level graph, which we write as a formula saying that $\eta$ lies in a suitable product of strata of differentials, recording its zeros and poles on the irreducible components of $X$. Given such $\eta$, the associated pointed stable differential $\omega$ coincides with $\eta$ on the top level components, and is identically zero elsewhere. To conclude, we use Lemma~\ref{lm:dimFibrePiDeux} to compute the dimension of the fiber of the map $\pi_2$ from the incidence variety compactification to its image in the Deligne-Mumford compactification (see the setting~\eqref{eq:forget2}).

For later use we denote by $\DC_{i}$ the {\em open} divisorial components of the boundary of $\barmoduli$. More precisely, $\DC_i$ for $1\leq i\leq [g/2]$ parameterizes reducible curves consisting of a genus $i$ smooth component and a genus $g-i$ smooth component attached at one node, and $\DC_0$ parameterizes irreducible nodal curves of geometric genus $g-1$.

We first consider the strata in genus zero, since they play a
central role in many cases. Recall that there exists a meromorphic differential on $\PP^1$ with any prescribed collection of zeros and poles, as long as the sum of their orders is equal to $-2$. As a consequence of Theorem~\ref{thm:main}, we deduce the following statement about residues of meromorphic differentials on~$\PP^{1}$.
\par
\begin{lm}\label{lm:gzeroontresidu}
Let $\omoduli[0](m_{0},\ldots,m_{n})$ be a stratum of meromorphic differentials
in genus zero such that  $m_{0}>0$ and $m_{i}<0$ for $1\leq i\leq n$. If~$n \geq 2$, then
every differential in this stratum has a nonzero residue at some pole.
\end{lm}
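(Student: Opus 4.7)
My plan is to argue by contradiction: suppose $\omega\in\omoduli[0](m_0,m_1,\ldots,m_n)$ has residue zero at every pole. Since the underlying curve is $\PP^1$, which is simply connected, a meromorphic differential with all residues vanishing has zero periods along every small loop around a pole, and these generate $H_1$ of the punctured sphere. Hence $\omega$ is exact: there is a rational function $f$ on $\PP^1$ with $\omega = df$. (Equivalently, this is visible directly from the partial fraction decomposition once the simple-pole coefficients are set to zero, each term $c_{ij}/(z-p_i)^j$ with $j\ge 2$ has the explicit primitive $-c_{ij}(z-p_i)^{1-j}/(j-1)$.) Note that vanishing residues in particular forces every $m_i \le -2$ for $i\ge 1$, since a simple pole has nonzero residue by definition.

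Next, I will compare the pole and ramification data of $f$. A pole of $\omega$ of order $-m_i$ corresponds to a pole of $f$ of order $-m_i-1 \ge 1$, so
\[
\deg f \= \sum_{i=1}^n (-m_i-1) \= -\sum_{i=1}^n m_i \,-\, n \= m_0+2-n,
\]
using $\sum_{i=0}^n m_i = -2$. The unique affine critical point of $f$ is the zero of $df=\omega$, at which $\omega$ vanishes to order $m_0$; hence $f$ takes the value $f(P)$ with multiplicity exactly $m_0+1$ at that point.

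The key observation is now that the multiplicity with which a rational function attains any value is at most its degree, so
\[
m_0 + 1 \;\le\; \deg f \= m_0 + 2 - n,
\]
forcing $n \le 1$, which contradicts the hypothesis $n\ge 2$.

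I do not expect any real obstacle here; the only subtlety worth double-checking is the claim that a meromorphic differential on $\PP^1$ with vanishing residues is exact, which I would justify either via the triviality of $H^1(\PP^1,\calO)$ or by the explicit partial-fraction primitive above. Everything else is a short Riemann--Hurwitz-style degree count.
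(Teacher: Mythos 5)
Your proof is correct, and it takes a genuinely different route from the paper's own proof of Lemma~\ref{lm:gzeroontresidu}. The paper's argument is a dimension count that relies on the full strength of Theorem~\ref{thm:main}: it attaches auxiliary higher-genus curves at each pole of $\eta_0$ to build a twisted differential compatible with a two-level graph, invokes the theorem to place the resulting pointed stable differentials in the boundary of the incidence variety compactification of the holomorphic stratum $\PP\omoduli(m_0,1,\ldots,1)$, and then derives a contradiction because that boundary locus would have the same dimension as the stratum itself. That proof is global, indirect, and tied to the machinery of the compactification. Yours is local and elementary -- exactness of $\omega$ on $\PP^1$, the degree formula $\deg f = m_0+2-n$, and the trivial bound that the multiplicity $m_0+1$ with which $f$ attains $f(P)$ cannot exceed $\deg f$ -- and is entirely independent of Theorem~\ref{thm:main}. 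In substance your argument is the first alternative proof recorded in Remark~\ref{rem:referee}, but you phrase it more carefully: the Remark's Riemann--Hurwitz identity $2d-2=(d-1)+(d-n)$ tacitly uses that $f$ has only two branch values, hence by the classification of connected unramified covers of $\CC^\ast$ is conjugate to $z\mapsto z^d$ and is totally ramified over both; your inequality $m_0+1\le\deg f$ delivers the contradiction $n\le 1$ directly without that topological detour, and so is the more self-contained way to run the argument.
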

\begin{proof}
If some $m_i = -1$, then the corresponding pole is simple, hence it has a nonzero residue. From now on assume that $m_i \leq -2$
for all $1\leq i \leq n$.
\par
We argue by contradiction. Suppose this stratum contains a differential~$\eta_0$ on $X_0 \cong \PP^1$
with zero residue at every pole. Denote by $z_{0}$ the zero of $\eta_0$ and $z_{i}$ the pole of~$\eta_0$
of order~$m_{i}$ for $1\leq i \leq n$. For every $i\geq 1$ we attach to $X_0$
at $z_{i}$ a curve $X_{i}$ as follows. If $m_{i}$ is even, we take a curve
of genus $g_{i}=-m_{i}/2$ such that there exists a differential $\eta_{i}$ on $X_{i}$
with a unique zero of order $-m_i-2$ at $z_0$. If $m_{i}$ is odd, we attach
a curve of genus $g_{i}=(-m_{i}+1)/2$ such that there exists a differential $\eta_{i}$
on $X_{i}$ with a zero of order $-m_{i}-2$ at $z_i$ and a simple zero elsewhere.
Let~$X$ be the resulting stable curve.
\par
The collection of $\eta_0, \eta_1, \ldots, \eta_n$ defines a twisted differential~$\eta$ of type
$(m_0,1,\ldots,1)$ on $X$, where the number of ones is equal to the number $d$ of odd negative $m_{i}$. Consider the level graph
$\overline{\Gamma}$ on $X$ such that $X_1\asymp\cdots \asymp X_n \succ X_0$. Given the hypothesis that $\eta_0$ has zero residues at $z_1, \ldots, z_n$, the \twd $\eta$ is compatible with $\overline{\Gamma}$. Hence by Theorem~\ref{thm:main}, all stable pointed differentials associated to such~$\eta$
are in the incidence variety compactification of the stratum
$\proj\omoduli(m_{0},1,\ldots,1)$, where $g=\sum_{i=1}^{n}\lfloor\frac{-m_{i}+1}{2}\rfloor$.
\par
Note that
$$\dim \proj\omoduli(m_{0},1,\ldots,1) \= 2g+d-1.$$
On the other
hand, the dimension of the space of stable pointed differentials associated to the
twisted differentials we constructed, with the projectivization of~$\eta_0$
on~$X_0$ fixed, has dimension
$$ -1 \+ \sum_{i>0\,,\, m_{i}\text{ even}}2g_{i}+\sum_{i>0\,,\,m_{i}\text{ odd}}
(2g_{i}+1)\=2g+d-1\,,$$
where the $-1$ results from simultaneous projectivization of the differentials on the top level components. Hence it has the
same dimension as the stratum, contradicting the fact that the boundary of a closed variety has smaller dimension compared to that of
the interior.
\end{proof}

\begin{rem}
\label{rem:referee}
The lemma can also be proved directly in two ways. If $\omega$ is such a differential with no residue, then $\omega = df$ for some meromorphic function $f$ on $\PP^1$. Regarding $f$ as a map from $\PP^1$ to $\PP^1$, the unique zero of $\omega$ corresponds to the unique ramification point of $f$ over $\PP^1\setminus \{\infty\}$, and the $n$ poles of $\omega$ map to $\infty$. It follows from the Riemann-Hurwitz formula that $2d-2 = (d-1) + (d-n)$ where $d$ is the degree of $f$, which contradicts that $n\geq 2$. Alternatively if $\omega$ has no residue, then the flat geometric representation of $\omega$ in the sense of \cite{boissy} has no saddle connections, hence the surface would be a wedge sum of $n$ spheres attached at the unique zero, leading to a contradiction.
\end{rem}

The above lemma yields a useful criterion when we apply the global residue condition to twisted differentials on a stable curve with a
rational component.

\begin{cor}\label{cor:pasSep}
Suppose $\mu = (m_1, \ldots, m_n)$ is a holomorphic type, i.e. all~$m_i$ are positive. Let $(X,z_1,\ldots,z_n)$ be a stable curve with an irreducible component $X_v$ of genus zero, such that all the nodes $q_1,\ldots,q_k$ contained in $X_v$ are separating, and $k\ge 2$.
If $\eta$ is a twisted differential of type $\mu$ such that $\eta_{v}$ on $X_v$ has a unique zero and has a pole at each $q_i$, then
$\eta$ is not compatible with any level graph on $X$.
\end{cor}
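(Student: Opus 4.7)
The plan is to assume that $\eta$ is compatible with some level graph $\overline\Gamma$ on $X$ and derive a contradiction via Lemma~\ref{lm:gzeroontresidu}.

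First, I apply Lemma~\ref{lm:gzeroontresidu} to $\eta_v$. Since $\mu$ is a holomorphic type, the unique zero of $\eta_v$ must lie at a marked point of $X_v$: it cannot be at a node (these are poles by hypothesis) nor at an unmarked smooth point (where $\eta_v$ is holomorphic and nonvanishing by condition~(0)). Hence $\eta_v$ realizes a point of a genus zero meromorphic stratum with at least two poles $q_1,\dots,q_k$, and Lemma~\ref{lm:gzeroontresidu} produces a node, which we may relabel as $q_1$, such that the residue $r_1$ of $\eta_v$ at the preimage of $q_1$ on $X_v$ is nonzero.

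Next I exploit the separating hypothesis on $q_1$. Write $X=A_1\cup B_1$ for the decomposition cut by $q_1$, with $X_v\subset A_1$ and $X_{w_1}\subset B_1$, where $w_1$ denotes the vertex opposite $v$ across $q_1$, and let $L$ be the level of $v$ in $\overline\Gamma$. By condition~(3) one has $w_1\succcurlyeq v$, with equality precisely when $\eta_v$ has a simple pole at $q_1$. The key consequence of separateness is that every connected component of $X_{>L}$ meeting $B_1$ lies entirely in $B_1$: any path inside such a component cannot cross $q_1$, whose $v$-end $X_v$ sits at level $L$ and is thus not in $X_{>L}$.

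The core of the argument is a confrontation of two identities, both involving the same sum $\Sigma$ of residues $\Res_{q^-}\eta_{v^-(q)}$ over the nodes $q$ joining $B_1\cap X_{>L}$ downward to $B_1\cap X_{=L}$. Applying condition~(4) to every connected component of $B_1\cap X_{>L}$ (allowed because $\mu$ is holomorphic) and summing yields $\Sigma+r_1\varepsilon_A=0$, where $\varepsilon_A=1$ in the subcase $w_1\succ v$ (in which case $q_1$ is itself a boundary node of $B_1\cap X_{>L}$ going down to $X_v$, contributing $r_1$) and $\varepsilon_A=0$ otherwise. Summing the residue theorem on $\eta_u$ over all $u\in B_1\cap X_{=L}$ yields $\Sigma-r_1\varepsilon_B=0$, where $\varepsilon_B=1$ in the subcase $w_1\asymp v$ (in which case the residue of $\eta_{w_1}$ at the preimage of $q_1$ on $X_{w_1}$ equals $-r_1$ by condition~(2)) and $\varepsilon_B=0$ otherwise; here the cancellations use that marked points are zeros in the holomorphic case, that nodes going further down have residue $0$ on their higher side by condition~(3), and that same-level nodes internal to $B_1\cap X_{=L}$ pair to $0$ by condition~(2). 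Since condition~(3) leaves exactly these two subcases, one has $\varepsilon_A+\varepsilon_B=1$, and subtracting the two identities produces $r_1=0$, contradicting the choice of $q_1$. The main subtlety lies in the bookkeeping of $q_1$ in each sum, which is where the separating hypothesis becomes essential: it ensures $q_1$ is the unique node linking $B_1$ to the rest of $X$, so that no cross-terms between $A_1$ and $B_1$ interfere with either identity.
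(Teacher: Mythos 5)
Your proof is correct and uses the same three ingredients as the paper's: Lemma~\ref{lm:gzeroontresidu}, the global residue condition, and the separating hypothesis. It is, however, more careful than the paper's one-line argument, which asserts that $X_v$ lies strictly below the component across each $q_i$ --- by condition~(3) this requires every pole of $\eta_v$ at a node to have order at least two, which need not hold (for instance $\divisor{\eta_v}=z-q_1-q_2-q_3$ on $\PP^1$ yields $v\asymp w_i$ for all $i$, in which case no $q_i$ is a boundary node of $X_{>\ell(v)}$ and the global residue condition at level $\ell(v)$ says nothing directly about $\Res_{q_i}\eta_v$). Your double bookkeeping --- global residue condition summed over $B_1\cap X_{>L}$ versus the residue theorem summed over $B_1\cap X_{=L}$ --- absorbs both subcases $w_1\succ v$ and $w_1\asymp v$ uniformly and correctly isolates the single contribution of $q_1$ to one of the two identities, giving $r_1=0$ either way. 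This fills in a real gap in the terse write-up, and in passing recovers the paper's stronger conclusion that every $\Res_{q_i}\eta_v$ vanishes.
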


\begin{proof} Suppose, on the contrary, that there is a level graph $\overline{\Gamma}$ on $X$ such that $\eta$ is compatible with
$\overline{\Gamma}$. Since $\eta_v$ has a pole at each node $q_i$, $X_v$ is on a lower level compared to the component on the other branch of $q_i$. Since $q_i$ is separating, the global residue condition applied to the level of $X_v$ implies that
$\Res_{q_i} \eta_v = 0$ for all $i$, contradicting Lemma~\ref{lm:gzeroontresidu}.
\end{proof}

Next we describe a relationship between pointed stable differentials and compatible level graphs. An irreducible component $X_{v}$ of $X$ is called a {\em non-strict local minimum} if for every component $X_{u}$ intersecting $X_{v}$, the inequality $X_{u}\preccurlyeq  X_{v}$ holds.
\begin{lm}\label{lm:minimumComp}
Let $(X,\omega,z_{1},\cdots,z_{n})$ be a pointed stable differential
and suppose that $g \neq 1$ or $n \neq 0$. If an irreducible component $X_v$
of $X$  is a (non-strict) local minimum in a level graph~$\overline\Gamma$,
then any twisted differential $\eta$ compatible with $\overline\Gamma$
that agrees with $\omega$ on the top level components,
has at least one zero or marked point at a smooth point of $X_v$.
\end{lm}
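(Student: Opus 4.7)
The plan is to argue by contradiction. Assume that $\eta_v$ has neither a zero nor a marked point at a smooth point of $X_v$; since marked points lie on the smooth locus by convention, this assumption amounts to saying that $X_v$ bears no marked points at all, together with $\eta_v$ being holomorphic and nonvanishing on $X_v\setminus\{\text{nodes}\}$ by condition~(0) of Definition~\ref{def:twistedMeroDiff}. Consequently, $\mathrm{div}(\eta_v)$ is supported entirely at the nodes of $X_v$.

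The next step exploits that $X_v$ is a non-strict local minimum: every neighbour $X_u$ is $\succcurlyeq X_v$, so condition~(3) of Definition~\ref{def:twistedAbType} yields $\ord_{q_v}\eta_v \leq -1$ at each node $q$ of $X_v$, with equality exactly when $X_u\asymp X_v$. Summing over the $k$ nodes of $X_v$ and using the degree identity $\sum_{q}\ord_{q_v}\eta_v = 2g_v-2$, I obtain
\[
 2g_v-2 \;\leq\; -k,\qquad\text{i.e.,}\qquad k\;\leq\;2-2g_v.
\]

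I would then conclude by a case split on $g_v$, combined with the stability of $(X,z_1,\ldots,z_n)$. For $g_v\geq 2$ the bound gives $k<0$, absurd. For $g_v=0$, $X_v\cong\PP^1$ has $k\leq 2$ nodes, contradicting the stability requirement of at least three special points on a rational component with no marked points. For $g_v=1$, the bound forces $k=0$, so $X_v$ is an entire connected component of $X$ and hence $X_v=X$; together with the absence of marked points on $X_v$ this yields $(g,n)=(1,0)$, precisely the case excluded by hypothesis.

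I expect the main subtlety to lie in the $g_v=1$ case: a bare smooth elliptic curve with no marked points genuinely admits a nowhere-vanishing holomorphic one-form, so the hypothesis $(g,n)\neq(1,0)$ is sharp and must be invoked at exactly this point. Note that only conditions~(0), (1), and (3) on twisted differentials enter the argument; the matching-of-residues condition~(2) and the global residue condition~(4) play no role here, which is consistent with the purely local nature of the statement.
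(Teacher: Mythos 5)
Your proposal is correct and follows essentially the same route as the paper's proof: assume for contradiction that $X_v$ carries no marked points, use condition~(3) to see that $\eta_v$ has a pole at every branch of a node on $X_v$, apply the degree identity $\deg\operatorname{div}(\eta_v)=2g_v-2$, and conclude via stability, with the genus-one case forcing $X_v=X$ and hence the excluded $(g,n)=(1,0)$. The paper phrases the degree step as "the only Riemann surfaces carrying a meromorphic differential without zeros are $\PP^1$ and genus~$1$" rather than writing out the inequality $k\le 2-2g_v$, but this is the same computation, and your explicit three-way case split by $g_v$ is a clean way to present it.
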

\par
\begin{proof}
Since $X_v$ is a (non-strict) local minimum, the differential $\eta_v$ has a pole at any node lying on~$X_{v}$. If $\eta_v$ has no marked zero
in the smooth locus of~$X_v$, then $X_v \cong \mathbb{P}^1$, because the only Riemann surfaces on which there exist a meromorphic differential without zeros are $\PP^1$ and a genus $1$ curve --- and in the latter case the differential must be holomorphic, with no zeroes or poles, and thus $X_v$ must equal to $X$, which is the excluded case. In that case $\eta_v$ either has two simple poles or a single double pole; hence  for the curve to be stable, $X_v$ must contain at least one marked point.
\end{proof}

This lemma implies the following useful result.

\begin{cor}\label{cor:oneminimum}
For $g\geq 1$ any pointed stable differential $(X,\omega,z)$ in the incidence variety compactification $\pobarmoduliinc[g,1]{2g-2}$
is compatible with a level graph that has a unique local minimum, which is the irreducible component of $X$ containing the marked point~$z$.
\end{cor}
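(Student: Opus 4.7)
The plan is to combine Theorem~\ref{thm:main} (existence of a compatible level graph) with Lemma~\ref{lm:minimumComp} (the obstruction to being a local minimum) in an essentially immediate way. First I would invoke Theorem~\ref{thm:main} to produce a level graph $\overline\Gamma$ on the dual graph of $X$ and a twisted differential $\eta$ of type $(2g-2)$ compatible with $\overline\Gamma$ such that $\eta$ agrees with $\omega$ on every top level component and $(X,\omega,z)$ is the associated pointed stable differential. Since the stratum carries $n=1$ marked point, the hypothesis ``$g\neq 1$ or $n\neq 0$'' of Lemma~\ref{lm:minimumComp} is automatic for all $g\geq 1$.

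Next I would use the shape of the type $\mu=(2g-2)$ to control where zeros and marked points of~$\eta$ can sit. By condition~(0) of Definition~\ref{def:twistedMeroDiff}, on every irreducible component $X_u$ the differential $\eta_u$ is holomorphic and nonzero away from nodes and marked points. The unique marked point~$z$ lies on a single irreducible component $X_{v_0}$, so any smooth-locus zero or marked point of $\eta$ must lie on $X_{v_0}$. Applying Lemma~\ref{lm:minimumComp}, every non-strict local minimum of $\overline\Gamma$ must contain such a smooth zero or marked point; hence the only possible local minimum is $X_{v_0}$.

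Finally I would note that a finite non-empty partially ordered set always has a minimal element, and any minimal element of the full order~$\succcurlyeq$ is in particular a non-strict local minimum in the sense of the paper. Thus $\overline\Gamma$ admits at least one local minimum, and by the previous paragraph it is unique and equals the component~$X_{v_0}$ carrying~$z$. This yields the corollary.

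I do not expect a genuine obstacle here: both ingredients, the existence of a compatible level graph from Theorem~\ref{thm:main} and the ``presence of a zero or marked point at a local minimum'' from Lemma~\ref{lm:minimumComp}, do exactly what we need. The only care required is checking the hypothesis of Lemma~\ref{lm:minimumComp} (satisfied because $n=1$) and observing that in the one-zero stratum $(2g-2)$ there is a unique candidate component for housing any local minimum.
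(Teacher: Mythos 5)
Your proof is correct and follows exactly the route the paper intends (the paper states the corollary as an immediate consequence of Lemma~\ref{lm:minimumComp} and gives no separate proof). You correctly combine Theorem~\ref{thm:main} to obtain a compatible level graph, observe that the hypothesis $n\neq 0$ of Lemma~\ref{lm:minimumComp} is automatic, note that condition~(0) forces all smooth-locus zeros and marked points of~$\eta$ onto the single component carrying~$z$, and close the argument by observing that the bottom-level component of any finite level graph is a non-strict local minimum, so one exists.
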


We conclude this section by the following lemma for later use. Recall the map $\pi_2$ defined in~\eqref{eq:forget2} from the incidence variety compactification to the Deligne-Mumford compactification.

\begin{lm}\label{lm:degenloop}
Let $S$ be an open boundary stratum of $\barmoduli[g,n]$ parameterizing curves with a fixed dual graph. Let $\tilde{S}$ be the boundary stratum corresponding to the dual graph obtained from the dual graph of $S$ by adding a loop at a vertex $v$ and decreasing the geometric genus of $X_v$ by one (i.e., adding a non-separating node $q$ for an irreducible component $X_v$ of curves in $S$). Then the dimension of $\pi_2^{-1}(\tilde{S})$ in the incidence variety compactification of any stratum of differentials is either strictly smaller than the dimension of $\pi_2^{-1}(S)$ or both preimages are empty.
\end{lm}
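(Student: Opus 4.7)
The plan is to use a plumbing argument to show $\pi_2^{-1}(\tilde S) \subseteq \overline{\pi_2^{-1}(S)}$, and then deduce the strict inequality of dimensions from the fact that $\pi_2^{-1}(S)$ is locally closed.

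If $\pi_2^{-1}(\tilde S)$ is non-empty, pick any $(\tilde X, \tilde\omega, \tilde z) \in \pi_2^{-1}(\tilde S)$. Theorem~\ref{thm:main} would provide a twisted differential $\tilde\eta$ of type $\mu$ compatible with some level graph $\tilde{\overline\Gamma}$ on $\tilde X$, associated to $(\tilde X, \tilde\omega, \tilde z)$. The two ends of the extra loop edge both lie at the vertex $v$, hence at the same level, and so by conditions~(2) and~(3) of Definition~\ref{def:twistedAbType} the restriction $\tilde\eta_v$ has simple poles with opposite residues at the two preimages $q^\pm$ of the internal node. This is exactly the local data required to perform a plumbing at the loop, and invoking the plumbing construction of Section~\ref{sec:proof} produces a one-parameter family $(X^t, \eta^t)$ with $(X^0, \eta^0) = (\tilde X, \tilde\eta)$ such that for $t \neq 0$ the curve $X^t$ has dual graph $\Gamma$ and $\eta^t$ is a twisted differential of type $\mu$ on $X^t$ compatible with the level graph $\overline\Gamma$ obtained from $\tilde{\overline\Gamma}$ by erasing the loop. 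Compatibility transfers because the plumbing is a local modification at the loop: orders and residues of $\eta^t$ at every other node and marked point agree with those of $\tilde\eta$; and erasing a loop does not affect connectivity, so the connected components of $\overline\Gamma_{>L}$ coincide with those of $\tilde{\overline\Gamma}_{>L}$ at every level~$L$, so the global residue condition~(4) transfers verbatim. Applying Theorem~\ref{thm:main} in the other direction then gives $(X^t, \omega^t, z^t) \in \pi_2^{-1}(S)$ for $t \neq 0$, and sending $t \to 0$ places $(\tilde X, \tilde\omega, \tilde z) \in \overline{\pi_2^{-1}(S)}$.

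Since $(\tilde X, \tilde\omega, \tilde z)$ was arbitrary, $\pi_2^{-1}(\tilde S) \subseteq \overline{\pi_2^{-1}(S)}$, which forces $\pi_2^{-1}(S)$ to be non-empty; and because $S \cap \tilde S = \emptyset$ the preimages are disjoint, so in fact $\pi_2^{-1}(\tilde S) \subseteq \overline{\pi_2^{-1}(S)} \setminus \pi_2^{-1}(S)$. Since $S$ is locally closed in $\barmoduli[g,n]$, its preimage $\pi_2^{-1}(S)$ is locally closed in the incidence variety compactification, hence open and dense in its closure along each irreducible component; therefore $\overline{\pi_2^{-1}(S)} \setminus \pi_2^{-1}(S)$ has strictly smaller dimension than $\pi_2^{-1}(S)$, yielding the desired inequality. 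The main obstacle I anticipate is the plumbing step itself, namely the assertion that a same-level internal node with opposite residues can be smoothed within the stratum while preserving compatibility with the remaining level graph; this is precisely what is established in Section~\ref{sec:proof}, and which I would freely invoke.
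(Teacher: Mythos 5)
Your proof is correct and takes essentially the same route as the paper: invoke Theorem~\ref{thm:main} to obtain an associated twisted differential, observe that the loop forces simple poles with opposite residues at the internal node (via conditions (1)--(3)), apply classical plumbing (Proposition~\ref{prop:classical_plumbing}) to produce a degenerating family landing in $\pi_2^{-1}(S)$ for $t\neq 0$, and conclude $\pi_2^{-1}(\tilde S)\subseteq\overline{\pi_2^{-1}(S)}$. You spell out two steps the paper leaves implicit---that the level-graph compatibility (in particular the global residue condition) transfers because removing a loop does not change connectivity, and that the final dimension drop follows from $\pi_2^{-1}(S)$ being locally closed---but these are exactly the details behind the paper's ``it suffices to show'' reduction, not a different argument.
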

\begin{proof}
To prove this result it suffices to show that $\pi_{2}^{-1}(\tilde{S})$ is contained in the closure of $\pi_{2}^{-1}(S)$. Let $(X,\omega,z_{1},\cdots,z_{n})$ be a stable pointed differential in $\pi_{2}^{-1}(\tilde{S})$. Choose an associated twisted differential $\eta$ in the sense of Definition~\ref{def:associated-diff}. Since the newly added node $q$ is an internal node, $\eta$ has simple poles with opposite residues at the two branches of $q$ by conditions (1), (2), and (3). Then one can locally smooth out $q$ to obtain a family of twisted differentials $\eta_{t}$, such that the stable differential associated to the general $\eta_{t}$ lies in $\pi_{2}^{-1}(S)$. This operation of locally smoothing a simple polar node can be performed by classical plumbing (see Lemma~\ref{prop:classical_plumbing}).
\end{proof}

\subsection{The incidence variety compactification of $\PP\omoduli[2](2)$}

In this section we work out in detail the irreducible components of the boundary
$\partial\PP\obarmoduliinc[2,1]{2}$
in the incidence variety compactification of $\PP\omoduli[2](2)$. This
stratum is connected. Moreover,~$\pi_2$ maps it to the locus of Weierstra\ss{} points in $\moduli[2,1]$.
The closure of this stratum was described already in \cite[Section~6]{gendron} by using admissible double covers.
Here we match these results and demonstrate how our current machinery works.  Coordinates around
part of this boundary were described in \cite[Section~6.6]{bainbridge_euler}.
\par
\begin{prop} \label{prop:str(2)}
The boundary of $\PP\obarmoduliinc[2,1]{2}$ has three irreducible components, given by
the closures of the loci $(\rom{1})$, $(\rom{2})$, and $(\rom{3})$ defined below.
\end{prop}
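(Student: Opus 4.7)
The plan is to apply Theorem~\ref{thm:main} stratum by stratum to enumerate all dimension-two (hence codimension-one) irreducible boundary components of the three-dimensional variety $\PP\obarmoduliinc[2,1]{2}$. By Corollary~\ref{cor:oneminimum}, any boundary point admits a compatible level graph whose unique non-strict local minimum is the component carrying the marked point $z_1$, and by Theorem~\ref{thm:main} the associated stable differential $\omega$ is nonzero exactly on the top level components. I would first work through the codimension-one strata of $\barmoduli[2,1]$, and then hunt for additional dimension-two loci coming from codimension-$\ge 2$ strata where, by Lemma~\ref{lm:dimFibrePiDeux}, the fibers of $\pi_2$ are positive dimensional because the top level is disconnected.

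For $X$ irreducible with a non-separating node $q$, the level graph is trivial; a compatible $\eta$ is a meromorphic differential on the elliptic normalization $\tilde X$ of divisor $2z_1 - q^+ - q^-$, and such $\eta$ exists precisely when $2z_1 \sim q^+ + q^-$ in $\Pic(\tilde X)$. This is a codimension-one condition on the three-dimensional base, giving a dimension-two boundary component. If instead $X = X_1 \cup X_2$ is a union of two elliptic curves meeting at a node $q$ with $z_1 \in X_1$, then $X_2 \succ X_1$ by Corollary~\ref{cor:oneminimum}; $\eta_{X_2}$ is the essentially unique holomorphic differential on $X_2$, and $\eta_{X_1}$ has divisor $2z_1 - 2q^+$, forcing the two-torsion condition $2z_1 \sim 2q^+$ on $X_1$. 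The global residue condition reduces to $\Res_{q^-}\eta_{X_1} = 0$, which holds automatically since $\eta_{X_1}$ has only one pole. Finally, over the codimension-two stratum of $\barmoduli[2,1]$ given by $X = X_0 \cup X_2$ with $X_0 \cong \PP^1$ meeting an elliptic $X_2$ at two nodes $q_1, q_2$ and $z_1 \in X_0$, Corollary~\ref{cor:oneminimum} places $X_0$ at the bottom, $\eta_{X_2}$ is holomorphic, and $\eta_{X_0}$ is the unique (up to scale) differential on $\PP^1$ with divisor $2z_1 - 2q_1 - 2q_2$. The global residue condition at the level of $X_0$ reads $\Res_{q_1^-}\eta_{X_0} + \Res_{q_2^-}\eta_{X_0} = 0$, which is just the residue theorem on $\PP^1$ and thus automatic. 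The base has dimension two and $\pi_2$ has zero-dimensional fibers here, producing the third dimension-two boundary component.

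The hard part will be ruling out all other potential boundary components. I would use Lemma~\ref{lm:gzeroontresidu} and Corollary~\ref{cor:pasSep} to eliminate dual graphs with a rational component whose residues are forced by the global residue condition to vanish in a way that contradicts the residue theorem: for example, the chain $E_1 \cup \PP^1 \cup E_2$ with $z_1 \in \PP^1$ and both elliptics at the top would require $\Res_{q_1^-}\eta_{\PP^1}$ and $\Res_{q_2^-}\eta_{\PP^1}$ to vanish independently, contradicting Lemma~\ref{lm:gzeroontresidu}. An irreducible curve with two non-separating nodes supports only a one-dimensional space of twisted differentials by Riemann-Roch on the rational normalization, and imposing both matching residue conditions generically cuts the preimage down to dimension one. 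Configurations obtained from the three identified configurations by further degenerating an internal node have strictly smaller $\pi_2$-preimage by Lemma~\ref{lm:degenloop}. Exhausting the finitely many remaining stable dual graphs with these three tools completes the enumeration.
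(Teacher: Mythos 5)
Your proposal follows essentially the same route as the paper: identify the three two-dimensional loci by going through the boundary strata of $\barmoduli[2,1]$ by codimension, then rule out all other strata using Corollary~\ref{cor:oneminimum}, Lemma~\ref{lm:degenloop}, Corollary~\ref{cor:pasSep}, Lemma~\ref{lm:gzeroontresidu}, and Lemma~\ref{lm:dimFibrePiDeux}. The only minor deviation is that you dispose of the irreducible-with-two-nodes locus via a direct Riemann--Roch and residue count rather than via Lemma~\ref{lm:degenloop} as the paper does; both give the same dimension drop (and note a small notational slip: with $X_2 \succ X_1$ and $z_1 \in X_1$, the pole of $\eta_{X_1}$ is at $q^-$, not $q^+$).
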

\par
The strategy of the proof is as follows. Recall that the boundary of
$\PP\obarmoduliinc[2,1]{2}$ is divisorial (see the beginning of Section~\ref{subsec:prelim}), hence its irreducible components are two-dimensional.
It thus suffices to locate all two-dimensional boundary strata of $\PP\obarmoduliinc[2,1]{2}$. We perform the search according to the dimension of the boundary strata
in~$\barmoduli[2,1]$.

We first  determine the $\pi_2$-preimage in $\PP\obarmoduliinc[2,1]{2}$ of every
open divisorial stratum of $\barmoduli[2,1]$. Let $X$ be a stable curve in the stratum~$\DC_0$, see Figure~\ref{cap:type-I}.
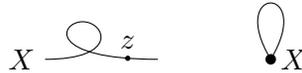
\begin{figure}[ht]
\begin{tikzpicture}[scale=1,decoration={
    markings,
    mark=at position 0.5 with {\arrow[very thick]{>}}}]

\draw [] (-3, 0) coordinate (x1)
  .. controls ++(0:1) and ++(0: .3) .. ( -2.5, .5)
  .. controls ++(180:.3) and ++(180:1.5) .. ( -1.5, 0) coordinate [pos=.9] (z);
\fill (z) circle (1pt);\node[above] at (z) {$z$};

\node [left] at (x1) {$X$};
\fill (0,0) coordinate (x1) circle (2pt);\node [right] at (x1) {$X$};
   \draw(x1) .. controls (.6,1) and (-.6,1) .. (x1) ;
\end{tikzpicture}
 \caption{A curve parameterized by $\DC_0$ and its level graph} \label{cap:type-I}
\end{figure}
Since $X$ is irreducible, there is no order to consider on the dual graph. Given a twisted differential $\eta$, the associated stable differential $\omega$ coincides with $\eta$, which is not identically zero on $X$. Such a differential has a double zero and two simple poles on the normalization
of $X$, which is of genus one. Thus the $\pi_2$-preimage of~$\DC_{0}$ in $\pobarmoduli[2,1]$ can be identified with the locus
$$
 (\rom{1})\,\coloneqq\,\PP\omoduliinc[1,\{3\}]{2,-1,-1},
$$
where the two simple poles are not ordered. This is the locus of elliptic
curves $E$ with three marked points $p_1,p_2,p_3$ such that $2p_1=p_2+p_3$.
It is irreducible according to \cite{boissymero}.
The locus $(\rom{1})$ is two-dimensional, and hence it gives an irreducible
component of the boundary of $\pobarmoduliinc[2,1]{2}$.
\par
In the open boundary divisor $\DC_1$, a stable curve~$X$ is the union of two elliptic curves~$X_{1}$ and $X_{2}$ intersecting at $q$, where $z\in X_{2}$. Suppose $\eta$ is a twisted differential compatible with a level graph $\overline{\Gamma}$ on $X$. Then $\eta_2$ on $X_2$ has a unique zero of order two at $z$, hence $\eta_2$ has a double pole at $q^-$, and the component $X_{2}$ is of lower level than~$X_{1}$ in $\overline{\Gamma}$, see Figure~\ref{cap:type-2}. Consequently $\eta_1$ on $X_1$ is holomorphic and nowhere vanishing.
\begin{figure}[ht]
\begin{tikzpicture}[scale=1,decoration={
    markings,
    mark=at position 0.5 with {\arrow[very thick]{>}}}]

\draw (-2.2,0) coordinate (x1).. controls (-1.7,-.2) and (-1.3,.2) .. (-1,0) coordinate[pos=.78](q1);
\draw (-1.2,0.2) .. controls (-1.4,-.3) and (-1,-.6) .. (-1.2,-1.1) coordinate (x2)
coordinate [pos=.6] (z) coordinate [pos=.22] (q2);
\fill (z) circle (1pt);\node[right] at (z) {$z$};

\node [left] at (x1) {$X_{1}$};\node [left] at (x2) {$X_{2}$};
\node [above] at (q1) {$q^{+}$};\node [right] at (q2) {$q^{-}$};

\fill (0,0) coordinate (x1) circle (2pt);\node [right] at (x1) {$X_{1}$};
\fill (0,-1) coordinate (x2) circle (2pt); \node [right] at (x2) {$X_{2}$};

 \draw[postaction={decorate}] (x1) -- (x2);
\end{tikzpicture}
 \caption{A curve parameterized by $\DC_1$ and its level graph} \label{cap:type-2}
\end{figure}
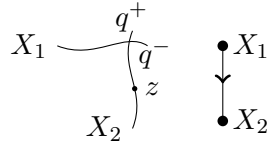
The global residue condition requires that $\Res_{q^-} \eta_2 = 0$, which follows from the residue theorem for $\eta_2$ on $X_2$. We have thus verified that in the above setting $\eta$ is compatible with $\overline{\Gamma}$. Such $(X, \eta)$ is equivalent to the collection of $(X_1,\eta_1,q^+)\in\omoduli[1,1]$ and $(X_2,\eta_2,z,q^-)\in\omoduliinc[1,2]{2,-2}$. Suppose $\omega$ is the associated stable differential of~$\eta$. Then $\omega|_{X_1} = \eta_1$ and $\omega$ is identically zero on the lower level component $X_2$. In other words, the associated pointed stable differential $(X, \omega, z)$ consists of $(X_1,\eta_1,q^+)$ and $(X_2,z,q^-)$, which forgets $\eta_2$
compared to $(X, \eta)$. In this sense we can identify the $\pi_2$-preimage of~$\DC_{1}$ in $\pobarmoduli[2,1]$ with  the locus
\bes
(\rom{2})\,\coloneqq\,\PP\omoduli[1,1]\times \pi_2(\PP\omoduliinc[1,2]{2,-2}).
\ees
Since we are working on the open part $\DC_1$ of the boundary divisor, the points $z$ and $q^-$ are not allowed to coincide (the degeneration when $z$ and $q^-$ coincide corresponds to stable curves that have three irreducible components, as depicted in figure~\ref{cap:rational-bridge}, and this case will be treated separately below). Thus the second factor is the locus in $\moduli[1,2]$ where $z$ and $q^-$ differ by a non-zero two-torsion point. According to the classification of \cite{boissymero},  this locus is irreducible. Together with the fact that $\PP\omoduli[1,1]$ is irreducible, this implies that the locus $(\rom{2})$ is irreducible and two-dimensional, and hence it gives an irreducible component of the boundary of $\PP\obarmoduliincp[2,1]{2}$.

We now study the $\pi_2$-preimages in $\PP\obarmoduliincp[2,1]{2}$ of the codimension two boundary strata of $\barmoduli[2,1]$. Consider the locus of irreducible stable curves with two nodes, see Figure~\ref{cap:2-nodal}.
\begin{figure}[ht]
\begin{tikzpicture}[scale=1,decoration={
    markings,
    mark=at position 0.5 with {\arrow[very thick]{>}}}]

\draw [] (-3, 0) coordinate (x1)
  .. controls ++(0:1) and ++(0: .3) .. ( -2.5, .6)
  .. controls ++(180:.3) and ++(180:1) .. ( -2, 0) coordinate  (z)
  .. controls ++(0: 1) and ++(0:.3) .. (-1.5, .6)
  .. controls ++(180: .3) and ++(180: 1) .. ( -1, 0);
\fill (z) circle (1pt);\node[above] at (z) {$z$};

\node [left] at (x1) {$X$};
\end{tikzpicture}
 \caption{An irreducible curve with two nodes} \label{cap:2-nodal}
\end{figure}
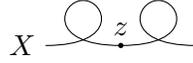
Since this stratum is obtained by adding a loop to the dual graph of curves in~$\DC_{0}$, by Lemma~\ref{lm:degenloop} its $\pi_2$-preimage has higher codimension in $\PP\obarmoduliincp[2,1]{2}$, hence does not give a boundary component.
\par
Next, consider the locus of stable curves as represented in Figure~\ref{cap:elliptic-rational-nodal}, where one component
is an elliptic curve and the other component is a rational nodal curve. The marked point $z$ is contained in one of the two components.
\begin{figure}[ht]
\begin{tikzpicture}[scale=1,decoration={
    markings,
    mark=at position 0.5 with {\arrow[very thick]{>}}}]
    \begin{scope}[xshift=-6cm]

\draw (-2.5,0) coordinate (x1)  .. controls (.5,.4) and (-3.5,.4) .. (-.7,0) ;
\draw (-.8,.2).. controls (-1.2,-.2) and (-.8,-.6) .. (-1,-1) coordinate (x2)
 coordinate [pos=.6] (z);
\fill (z) circle (1pt);\node[right] at (z) {$z$};

\node [left] at (x1) {$X_{1}$};\node [left] at (x2) {$X_{2}$};

\fill (0,0) coordinate (x1) circle (2pt);\node [right] at (x1) {$X_{1}$};
\fill (0,-1) coordinate (x2) circle (2pt); \node [right] at (x2) {$X_{2}$};
   \draw(x1) .. controls (.6,.4) and (-.6,.4) .. (x1) ;
 \draw[postaction={decorate}] (x1) -- (x2);
\end{scope}

\draw (-2.7,0) coordinate (x1).. controls (-2.2,.2) and (-1.8,-.2) .. (-.8,0) ;
\draw (-1.2,0.2) .. controls ++(270:.6) and ++(270:.4) .. (-.5,-.5)
                 .. controls ++(90:.4) and ++(90:.6) .. (-1.2,-1.2) coordinate (x2) coordinate [pos=.8] (z);
\fill (z) circle (1pt);\node[right] at (z) {$z$};

\node [left] at (x1) {$X_{1}$};\node [left] at (x2) {$X_{2}$};

\fill (0,0) coordinate (x1) circle (2pt);\node [right] at (x1) {$X_{1}$};
\fill (0,-1) coordinate (x2) circle (2pt); \node [right] at (x2) {$X_{2}$};
   \draw(x2) .. controls (.6,-1.4) and (-.6,-1.4) .. (x2) ;
 \draw[postaction={decorate}] (x1) -- (x2);
\end{tikzpicture}
 \caption{An elliptic curve union a rational nodal curve  and the level graph} \label{cap:elliptic-rational-nodal}
\end{figure}
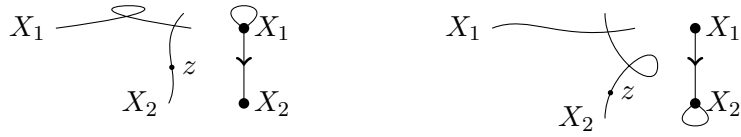
Clearly both dual graphs are obtained by adding a loop to the dual graph of curves in $\DC_{1}$. Hence Lemma~\ref{lm:degenloop} implies that the $\pi_2$-preimages of these strata do not give boundary components of $\PP\obarmoduliincp[2,1]{2}$.
\par
Another codimension two boundary stratum of $\bM_{2,1}$
parameterizes stable pointed curves that have a rational bridge~$X_3$ connecting two elliptic curves~$X_1$ and~$X_2$, where the marked point $z$ lies on~$X_{3}$ to ensure stability of $X$, see Figure~\ref{cap:rational-bridge}. It follows from Corollary~\ref{cor:pasSep} that this stratum does not intersect the projection of $\PP\obarmoduliincp[2,1]{2}$ under~$\pi_{2}$.
\par
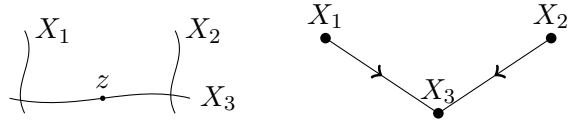
\begin{figure}[ht]
\begin{tikzpicture}[scale=1,decoration={
    markings,
    mark=at position 0.5 with {\arrow[very thick]{>}}}]

\draw (-4,0.1) coordinate (x1) .. controls (-3.8,-.3) and (-4.2,-.6) .. (-4,-1);
\draw (-2,0.1)  coordinate (x2).. controls (-1.8,-.3) and (-2.2,-.6) .. (-2,-1);

\draw (-4.2,-.8) .. controls (-3.3,-1) and (-2.6,-.6) .. (-1.8,-.8) coordinate (x3)
coordinate [pos=.5] (z);
\fill (z) circle (1pt);\node [above] at (z) {$z$};
\node [right] at (x1) {$X_{1}$};\node [right] at (x2) {$X_{2}$};\node [right] at (x3) {$X_{3}$};

\fill (0,0) coordinate (x1) circle (2pt);\node [above] at (x1) {$X_{1}$};
\fill (3,0) coordinate (x2) circle (2pt); \node [above] at (x2) {$X_{2}$};
\fill (1.5,-1) coordinate (x3) circle (2pt);
\node [above] at (x3) {$X_{3}$};

 \draw[postaction={decorate}] (x1) -- (x3);
 \draw[postaction={decorate}] (x2) -- (x3);

\end{tikzpicture}
 \caption{Two elliptic curves connected by a pointed rational curve} \label{cap:rational-bridge}
\end{figure}

The only remaining codimension two boundary stratum of $\bM_{2,1}$ parameterizes curves consisting of two irreducible components~$X_1$ and $X_2$ intersecting at two points, where~$X_1$ has genus one and $X_2$ has genus zero, and the marked point $z$ is contained in $X_2$ to ensure stability of $X$, see Figure~\ref{cap:banana}.
\par
\begin{figure}[ht]
\begin{tikzpicture}[scale=1,decoration={
    markings,
    mark=at position 0.5 with {\arrow[very thick]{>}}}]

\draw (-4.2,-0.4) .. controls (-3.3,-1.1) and (-2.6,-1.1) .. (-1.8,-0.4) coordinate (x3)coordinate [pos=.1] (x2)
coordinate [pos=.5] (z);
\fill (z) circle (1pt);\node [above] at (z) {$z$};\node [below] at (x2) {$X_{2}$};
\draw (-4.2,-0.6) coordinate (q1).. controls (-3.3,.1) and (-2.6,.1) .. (-1.8,-.6)  coordinate (q2) node [pos=.7,above]{$X_{1}$};

\node [left] at (q1) {$q_{1}$};
\node [right] at (q2) {$q_{2}$};

\fill (0,0) coordinate (x2) circle (2pt); \node [right] at (x2) {$X_{1}$};
\fill (0,-1) coordinate (x3) circle (2pt); \node [right] at (x3) {$X_{2}$};

 \draw[postaction={decorate}] (x2) ..controls (.2,-.5)  ..  (x3);
 \draw[postaction={decorate}] (x2) ..controls (-.2,-.5)  ..  (x3);
\end{tikzpicture}
 \caption{Two curves intersecting at two nodes  and the level graph} \label{cap:banana}
\end{figure}
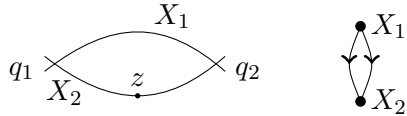
Suppose $\eta$ is a twisted differential compatible with a level graph $\overline{\Gamma}$ on $X$. It follows from Corollary~\ref{cor:oneminimum} that $X_1\succ X_2$. Since $\eta_1$ on $X_1$ has no zeros, it is holomorphic and nowhere vanishing, and then~$\eta_2$ has a double pole at each of the two nodes with a
double zero at $z$. In this case the global residue condition requires that $\Res_{q_1^-}\eta_2 +\Res_{q_2^-}\eta_2 = 0$, which follows from the residue theorem.
The associated stable differential~$\omega$ is equal to $\eta_1$ on $X_1$, and $\omega$ on $X_2$ is identically zero. Hence the locus of such $(X, \omega, z)$ in
$\pobarmoduliinc[2,1]{2}$ can be identified with
$$
 (\rom{3})\,\coloneqq\,\PP\omoduli[1,2]\times\moduli[0,3],
$$
where the second factor corresponds to the one-point space $\pi_2(\PP\omoduliinc[0,\{3\}]{2,-2,-2})$. Since the locus $(\rom{3})$ is clearly irreducible and two-dimensional, it gives an irreducible component of the boundary of $\PP\obarmoduliinc[2,1]{2}$.
\par
It remains to consider the boundary strata of $\barmoduli[2,1]$ that have codimension greater or equal to three. If there exists a boundary component of $\PP\obarmoduliinc[2,1]{2}$ mapping to such a stratum, then the fibers of $\pi_{2}$ over this stratum have positive dimension. By Lemma~\ref{lm:dimFibrePiDeux}, for this to happen, the graph $\overline{\Gamma}_{=0}$ must have at least two connected components. In our case, the components of top level would have to be of genus at least one. Hence such a stratum would have to parameterize curves with two irreducible components of  (arithmetic) genus at least one, neither of them containing the marked point $z$.  The only possibility is that such curves are degenerations of the curves in Figure~\ref{cap:rational-bridge}. Such degenerations can happen only if (at least) one of the two elliptic components degenerates to a rational nodal curve. However, in that case Corollary~\ref{cor:pasSep} still applies, hence the locus of such curves is disjoint from the image of $\PP\obarmoduliinc[2,1]{2}$ under $\pi_2$. This completes the proof of Proposition~\ref{prop:str(2)}.

\subsection{The incidence variety compactification of $\pomoduli[3](4)$}
The stratum $\pomoduli[3](4)$ has two irreducible components $\pomoduli[3](4)^{\rm hyp}$ and $\pomoduli[3](4)^{\rm odd}$ (see \cite{kozo1}). The hyperelliptic component parameterizes hyperelliptic curves with a marked Weierstra\ss{} point.
The odd-spin component parameterizes nonhyperelliptic curves whose canonical embeddings (as plane quartics) have a hyperflex.
Below we will describe the closure of the entire stratum.  The closure of
each connected component will follow from the results in the
forthcoming work~\cite{BCGGM2}.
\par
For a pointed stable differential $(X,\omega,z)$ in the boundary of $\pobarmoduliinc[3,1]{4}$, let $\overline\Gamma$ be a level graph on $X$ and $\eta$ be a compatible
twisted differential such that the associated pointed stable differential is $(X,\omega,z)$. By Corollary~\ref{cor:oneminimum}, $\overline\Gamma$ has a unique minimal vertex, and the corresponding irreducible component of $X$ contains the marked point~$z$.
\par
We first determine the $\pi_2$-preimages in $\pobarmoduliinc[3,1]{4}$ of the open boundary divisors of $\barmoduli[3,1]$. The computation over $\DC_0$ is similar to the previous case, and we obtain the corresponding locus as
$$
 ({\rm i})\,\coloneqq\, \PP\omoduliinc[2,\{3\}]{4,-1,-1},
$$
where the two simple poles are unordered. By~\cite[Theorem~1.2]{boissymero}, this locus has two connected components that can be distinguished by their parity. Both these components are four-dimensional and therefore each of them is an irreducible component of the boundary of $\pobarmoduliinc[3,1]{4}$. In fact,
by the discussion in~\cite[Section~4]{gendron} each component of $\PP\omoduliinc[2,\{3\}]{4,-1,-1}$ is a boundary component of the connected component of $\obarmoduliinc[3,1]{4}$ of the same parity.
\par
For $i=1,2$, denote by $\DC_{1,i}$ the open boundary divisor of $\barmoduli[3,1]$ parameterizing nodal unions of a genus one curve with a genus two curve, where the marked point is contained in the genus $i$ component, respectively. By Lemma~\ref{lm:minimumComp}, the component containing the marked point is of lower level compared to the unmarked component in $\overline\Gamma$. As in the previous section, we obtain that
$$ ({\rm ii})\,\coloneqq\,\pi_2^{-1}(\DC_{1,1})\=\PP\omoduliinc[2,1]{2}\times\pi_2(\PP\omoduliinc[1,2]{4,-4}), $$
$$({\rm iii})\,\coloneqq\, \pi_2^{-1}(\DC_{1,2})\=\pomoduli[1,1]\times\pi_2(\PP\omoduliinc[2,2]{4,-2}), $$
both of which are again purely four-dimensional. By~\cite{boissymero}, each of these two loci has two connected components: a hyperelliptic and a non-hyperelliptic one, and each of these is then an irreducible component of the boundary of the corresponding component of $\pobarmoduliinc[3,1]{4}$.

Next we investigate the situation over the codimension two boundary strata of $\barmoduli[3,1]$. By Lemma~\ref{lm:degenloop}, the $\pi_2$-preimage of the locus of irreducible curves with two nodes has codimension bigger than one, hence it does not provide a boundary component of $\pobarmoduliinc[3,1]{4}$.
\par
Let $\Theta_{0}\subset \barmoduli[3,1]$ denote the locus of curves with two components $X_0$ and $X_2$ intersecting at two nodes, where $X_0 \cong \PP^1$, $X_2$ has genus two, and $X_0$ contains the marked point. In this case $X_2 \succ X_0$ in $\overline\Gamma$, hence the stable differential $\omega$ is identically zero on~$X_0$. For $\omega$ on $X_2$, it is given by $\eta_2$, and there are two possibilities. If $\eta_2$ has simple zeros at both nodes, then $\eta_0$ on~$X_0$ has triple poles at both nodes, and hence $(X, \omega, z)$ is parameterized by the irreducible locus
$$
  (\rm{iv})\,\coloneqq\,\PP\omoduliinc[2,\{2\}]{1,1}\times \pi_2(\PP\omoduliinc[0,\{3\}]{4,-3,-3}).
$$
On the other hand, if $\eta_2$ has a double zero at one node and is regular at the other, then $(X, \omega, z)$ is parameterized by the irreducible locus
$$
 (\rm{v})\,\coloneqq\,\PP\omoduliinc[2,2]{2,0}\times \pi_2(\PP\omoduliinc[0,3]{4,-4,-2}).
$$
Both loci are four-dimensional irreducible, and thus
$$ \pi_2^{-1}(\Theta_0) \= (\rm{iv}) \cup (\rm{v}), $$
providing (irreducible) boundary components of $\pobarmoduliinc[3,1]{4}$.
\par
Let $\Theta_{1}\subset \barmoduli[3,1]$ denote the locus of curves with two components $X_1$ and $X_2$ intersecting at two nodes, where
both components have genus one and $X_1$ contains the marked point. A similar analysis as in the case of $\Theta_0$ implies that
the pointed stable differential $(X, \omega, z)$  in $\pobarmoduliinc[3,1]{4}\cap \pi_{2}^{-1}(\Theta_1)$ then lies in the locus
$$
(\rm{vi})\coloneqq \PP\omoduli[1,\lbrace2\rbrace]\times\pi_2(\PP\omoduliinc[1,\{3\}]{4,-2,-2}),
$$
which has two irreducible components, both of dimension four, hence providing boundary components of $\pobarmoduliinc[3,1]{4}$. As remarked in \cite[Section~7]{gendron}, each of these components two components lies in the boundary of both components of $\pobarmoduliinc[3,1]{4}$.
\par
For the codimension two boundary strata of $\barmoduli[3,1]$ parameterizing curves with one separating node and one non-separating node, they are contained in the closure of $\DC_{1,i}$ for $i=1,2$, by pinching a non-separating loop on an irreducible component of curves in $\DC_{1,i}$. This is the operation described by Lemma~\ref{lm:degenloop}, and thus the corresponding boundary loci in $\pobarmoduliinc[3,1]{4}$ are contained in $(\rm{ii})$ and $(\rm{iii})$ defined above. Therefore, the $\pi_2$-preimages of such codimension two boundary strata do not give rise to any boundary components of $\PP\obarmoduliinc[3,1]{4}$.
\par
Let us consider the other codimension two boundary strata of $\barmoduli[3,1]$ parameterizing curves with two separating nodes. There are several possibilities. If all three components $X_1$, $X_2$, and $X_3$ of the curve $X$ have genus one and the marked point lies in the middle component $X_2$, then the \twd~$\eta$ lies in the locus contained in
$$
\omoduli[1,1]\times\omoduliinc[1,\lbrace3\rbrace]{4,-2,-2}\times\omoduli[1,1],
$$
which is cut out by the global residue condition that $\eta_2$ has zero residues at both nodes on the middle component $X_2$. Here we encounter the situation that the map $\pi_2$ is no longer finite. In this case the full order is $X_1\asymp X_3\succ X_2$. Since $X_1$ and $X_3$ are both of top level,
by Lemma~\ref{lm:dimFibrePiDeux} the fibers of $\pi_2$ are one-dimensional, which record the projectivization of the pair of scaling factors for $\eta_1$ and $\eta_3$. It follows that the associated pointed stable differential $(X, \omega, z)$
lies in the codimension one locus
$$
(\rm{vii})\subsetneq\PP(\omoduli[1,1]\times\omoduli[1,1])\times\pi_2(\omoduliinc[1,\lbrace3\rbrace]{4,-2,-2})
$$
given by the condition that $\eta_2$ has zero residues. By the results of~\cite{boissy} it follows that the locus $(\rm{vii})$ is non-empty, and thus its codimension in this product is equal to one. The locus (\rm{vii}) is thus purely four-dimensional; the number of its irreducible components is not known, but each such irreducible component gives a boundary component of $\PP\omoduliinc[3,1]{4}$.
\par
On the other hand if the marked point $z$ lies in $X_1$, then the full order is given by  $X_3\succ X_2 \succ X_1$.
Since $X_3$ is the unique irreducible component of top level, the map $\pi_2$ is finite. The \twd~$\eta$ lies in the locus
$$
 \omoduli[1,1]\times\omoduliinc[1,2]{2,-2}\times\omoduliinc[1,2]{4,-4}
$$
and the associated pointed stable differential lies in the locus
$$
 \PP\omoduli[1,1]\times\pi_2(\omoduliinc[1,2]{2,-2})\times\pi_2(\omoduliinc[1,2]{4,-4}),
$$
which is three-dimensional, and thus not a boundary component of $\PP\obarmoduliinc[3,1]{4}$.

Suppose now that one of the three irreducible components of $X$ has genus zero. By stability of $X$, the $\PP^1$ component
must be the middle component, and it contains the marked point $z$. It follows from Corollary~\ref{cor:pasSep} that this locus is disjoint with the image of $\PP\obarmoduliinc[3,1]{4}$ under $\pi_2$. This completes our analysis over the codimension two boundary strata of $\barmoduli[3,1]$.
\par
We now investigate over which boundary strata of $\barmoduli[3,1]$ of codimension greater or equal to three there may be boundary components of $\PP\obarmoduliinc[3,1]{4}$. As there are numerous cases, we first make some general observations. If a boundary stratum of
$\barmoduli[3,1]$ parameterizes stable curves that have an internal node, then by Lemma \ref{lm:degenloop}, its $\pi_2$-preimage does not give rise to
a boundary component of $\PP\obarmoduliinc[3,1]{4}$. Therefore, it suffices to consider the case when all irreducible components of the stable curves are smooth. Furthermore, if the $\pi_2$-preimage of a boundary stratum of $\barmoduli[3,1]$ of codimension greater or equal to three
gives a boundary component of $\PP\obarmoduliinc[3,1]{4}$, then the fibers of~$\pi_2$ over this stratum have positive dimension.
\par
Let us first determine which codimension three strata of the boundary of $\barmoduli[3,1]$ are contained in $\pi_2(\PP\obarmoduliinc[3,1]{4})$. As before, Lemma~\ref{lm:degenloop} allows us to assume that no edge of the dual graph~$\Gamma$ is a loop. Moreover, if $h^1(\Gamma)\ge 3$, the stratum has codimension at least 4, and can be discarded. The case of $h^1(\Gamma)=0$, i.e. the $\Gamma$ being a tree, is also easily ruled out. If $h^1(\Gamma)=1$, then $\Gamma$ must be the triangle, with two curves of genus one and one $\PP^1$ containing the marked point. In this case, the admissible twisted differentials gives a non-trivial torsion condition on the marked points on one elliptic curve. Hence the whole stratum is not in the image $\pi_2(\PP\obarmoduliinc[3,1]{4})$. Finally, if $h^{1}(\Gamma)=2$,  the only possibility is the
locus parameterizing curves with two irreducible components intersecting at three nodes, where one component has genus one, and the other is $\PP^1$, containing the marked point $z$. Moreover, the $\PP^1$ must be of lower level in~$\overline{\Gamma}$.
The global residue condition on $\eta$ then holds automatically by the residue theorem. In this case the associated pointed stable differentials are parameterized by
\[
(\rm{viii})\,\coloneqq\,\pomoduli[1,3]\times\pi_2(\PP\omoduliinc[0,\{4\}]{4,-2,-2,-2}).
\]
 This locus is irreducible and four-dimensional, hence it gives an irreducible boundary component of $\pobarmoduliinc[3,1]{4}$.
\par
Let us now look at the cases when $\pi_2$ has positive dimensional fibers. According to Lemma~\ref{lm:dimFibrePiDeux} the level graph $\overline{\Gamma}$ has at least two top level components. Both are smooth and of genus at least one.
If some two top level components are connected by a node, the presence of this node decreases the codimension of such a locus by one, and thus such a locus must be properly contained in a higher-dimensional component of the boundary.
Moreover by Lemma~\ref{lm:minimumComp}, the unique minimal component~$X_v$ of
$\overline{\Gamma}$ contains the marked point $z$. If $X_v$ has genus at least one, then the curve~$X$ is of compact type (which, recall, means that the dual graph is a tree). The only such stable curve $X$ is given by a $\PP^1$ component, denoted by~$X_3$, intersecting three elliptic components
$X_1$, $X_2$, and $X_v$, with the full order given by $X_1\asymp X_2\succ X_3\succ X_v$. Then a compatible \twd~$\eta$ on $X_3\cong \PP^1$ would have a double zero at the node joining $X_v$ and two double poles with zero residues at the other two nodes, which is impossible by Lemma~\ref{lm:gzeroontresidu}.
\par
Now assume that the minimal component $X_v$ has genus zero. Since $X_v$ contains~$z$, in order to be stable, $X_v$ must have at least two nodes. If it has precisely two nodes, then $X\setminus X_v$ has to be connected, for otherwise the global residue condition would imply that~$\eta_v$ has zero residues at the two nodes and a unique zero at $z$, which is impossible by Lemma~\ref{lm:gzeroontresidu}. In this case the only possibility of such $X$ and $\overline{\Gamma}$ is $X_1\asymp X_2\succ X_v$, where~$X_1$ and $X_2$ are of genus one, and $X_v\cong \PP^1$. Then there does not exist a compatible \twd~$\eta$, for otherwise $\eta_{1}$ on the elliptic component~$X_1$ would have a simple pole at the node joining $X_2$ and a simple zero at the node joining~$X_v$, impossible.
\par
Now consider the case when $X_v\cong\PP^1$ contains more than two nodes. If there are three top level components, all of them have to be elliptic curves and have to intersect~$X_v$. Then the global residue condition implies that any compatible \twd~$\eta$ has zero residues at the poles of $\eta_{v}$ on $X_v$. By Lemma~\ref{lm:gzeroontresidu} such $\eta$ does not exist. If $X$ has only two top level components, then both of them have to be elliptic curves, and the dual graph of $X$ contains a circle. Hence all but two of the nodes contained in $X_v$ are separating. In order
for $X$ to be stable, at each separating node the attached curve must be of genus one. Thus there can be only one separating node. In this case the only possibility is that $X_v$ intersects $X_1$ at one point and $X_v$ intersects $X_2$ at two points, where both $X_1$ and $X_2$ are of genus one. Let $S$ be the locus in $\omoduliinc[0,\{4\}]{4,-2,-2,-2}$ parameterizing meromorphic differentials whose residues are zero at the first pole. In this case
the \twds~are parameterized by the locus
$$
 \omoduli[1,1]\times\omoduli[1,2]\times S.
$$
The associated pointed stable differentials $(X, \omega, z)$ are parameterized by
\[
({\rm ix})\,\coloneqq\,\PP(\omoduli[1,1]\times\omoduli[1,2])\times \pi_2(S),
\]
which is four-dimensional. We claim that this locus is connected, and thus it gives an irreducible component of the boundary of $\pobarmoduliinc[3,1]{4}$. Indeed, a differential in $\omoduliinc[0,\{4\}]{4,-2,-2,-2}$ can be written in the form $\omega=\tfrac{dz}{z^2 (z-1)^2(z-a)^2}$ with $a\in\CC\setminus\left\{0,1\right\}$. The residue of $\omega$ at $0$ is $\Res_{0}(\omega)=\tfrac{2(a+1)}{a^{3}}$, which is zero if and only if $a=-1$. This shows that $S$ and hence $({\rm ix})$ is connected.
\par
In summary, we have proved the following result.
\begin{prop}
The boundary of $\pobarmoduliinc[3,1]{4}$ is the union of the closures of the loci $(\rm{i}),$ $(\rm{ii})$, $(\rm{iii})$, $(\rm{iv})$, $(\rm{v})$, $(\rm{vi})$, $(\rm{vii})$,
$(\rm{viii})$, and $(\rm{ix})$ defined above. Furthermore, for each of these cases except $(\rm{vii})$, the irreducible components of the corresponding locus are enumerated above, so that in total we see that the number of irreducible components of the boundary of $\pobarmoduliinc[3,1]{4}$ is equal to $12$ plus the number of irreducible components of the locus $(\rm{vii})$. Moreover, the loci (\rm{vii}) and (\rm{ix}) are contracted under the map $\pi_{2}$.
\end{prop}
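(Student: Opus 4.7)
The plan is to prove the proposition by compiling the case-by-case analysis of the preceding pages into a systematic stratification. The key input is that $\pobarmoduliinc[3,1]{4}$ is five-dimensional and has divisorial boundary, so every irreducible boundary component is four-dimensional. I would stratify via the forgetful map $\pi_2 \colon \pobarmoduliinc[3,1]{4} \to \barmoduli[3,1]$ and, for each boundary stratum $S$ of $\barmoduli[3,1]$, use Theorem~\ref{thm:main} to enumerate the possible pairs $(\overline\Gamma,\eta)$ of a level graph and a compatible twisted differential over curves in $S$, then use Lemma~\ref{lm:dimFibrePiDeux} to compute $\dim \pi_2^{-1}(S)$. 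A locus qualifies as a boundary component exactly when its dimension equals four and it is not properly contained in another such locus.

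First, over the three open boundary divisors $\DC_0$, $\DC_{1,1}$, $\DC_{1,2}$ of $\barmoduli[3,1]$, the analysis already carried out identifies exactly the loci $(\rm{i})$, $(\rm{ii})$, $(\rm{iii})$, each of dimension four. Second, over the codimension-two strata, I would invoke Lemma~\ref{lm:degenloop} to dispense with strata containing internal nodes and Corollary~\ref{cor:pasSep} to dispense with rational bridges joining two elliptic curves; the remaining cases yield $(\rm{iv})$, $(\rm{v})$ over $\Theta_0$, $(\rm{vi})$ over $\Theta_1$, and $(\rm{vii})$ over the chain $X_1\asymp X_3 \succ X_2$ of three elliptic components. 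Lemma~\ref{lm:dimFibrePiDeux} applied to the two top-level components $X_1, X_3$ confirms that the fiber of $\pi_2$ is one-dimensional, so $(\rm{vii})$ has dimension four despite being cut out by the nontrivial global residue condition that $\eta_2$ has vanishing residues at both nodes.

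Third, for codimension-$\geq 3$ strata of $\barmoduli[3,1]$, my plan is to reduce aggressively: Lemma~\ref{lm:degenloop} removes internal nodes, and if $\pi_2$ has zero-dimensional fibers the preimage sits in codimension at least three. Hence by Lemma~\ref{lm:dimFibrePiDeux} I may assume at least two top-level components, each a smooth curve of genus at least one. Combined with Lemma~\ref{lm:minimumComp} the unique minimum must carry the marked point $z$. A case split on $h^1(\Gamma)$ and on the genus of the minimum component, pruned by Lemma~\ref{lm:gzeroontresidu} each time the minimum is a $\PP^1$ forced to have zero-residue poles, leaves precisely $(\rm{viii})$ (the theta graph with an elliptic curve and the rational component containing $z$) and $(\rm{ix})$ (an elliptic curve attached at one node plus an elliptic curve attached at two nodes to a rational minimum containing $z$). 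For $(\rm{ix})$ I would verify nonemptiness and connectedness explicitly by exhibiting a compatible differential on $\PP^1$, for example $\omega = dz/(z^2(z-1)^2(z-a)^2)$ at $a = -1$, where $\Res_0 \omega = 0$.

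Finally, I would assemble the component count: by \cite{boissymero}, each of $(\rm{i})$, $(\rm{ii})$, $(\rm{iii})$, $(\rm{vi})$ has two irreducible components distinguished by spin parity, while $(\rm{iv})$, $(\rm{v})$, $(\rm{viii})$, $(\rm{ix})$ are each irreducible, yielding $2+2+2+1+1+2+1+1 = 12$ components plus the unknown number from $(\rm{vii})$. The contraction of $(\rm{vii})$ and $(\rm{ix})$ under $\pi_2$ is immediate from Lemma~\ref{lm:dimFibrePiDeux}, since in both cases the projective scaling on the top-level components is lost. I expect the main obstacle to be the exhaustive verification at step three, in particular ensuring that no exotic full-order assignment on a high-codimension dual graph has been overlooked; the repeated use of Lemma~\ref{lm:gzeroontresidu} to rule out rational minima with more than two zero-residue poles is the tool that keeps this finite.
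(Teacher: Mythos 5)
Your proposal follows the paper's proof essentially verbatim: stratify by $\pi_2$, enumerate compatible $(\overline\Gamma,\eta)$ over each boundary stratum of $\barmoduli[3,1]$, compute fiber dimensions via Lemma~\ref{lm:dimFibrePiDeux}, and discard cases using Lemmas~\ref{lm:degenloop}, \ref{lm:minimumComp}, \ref{lm:gzeroontresidu} and Corollary~\ref{cor:pasSep}. The component tally $2+2+2+1+1+2+1+1=12$ and the residue computation for $(\rm{ix})$ with $a=-1$ all match the paper.

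One wrinkle to be careful about: the reduction step ``if $\pi_2$ has zero-dimensional fibers the preimage sits in codimension at least three, hence I may assume at least two top-level components'' is not quite right when the base stratum of $\barmoduli[3,1]$ has codimension exactly $3$. A codimension-$3$ stratum has dimension $4$, so a finite-fiber preimage can already have dimension $4$, i.e.\ codimension $1$. Indeed $(\rm{viii})$ is exactly such a case: the level graph has a single top-level component (the elliptic curve), the $\pi_2$-fibers are finite, and the preimage is nonetheless a boundary divisor. Your reduction, if applied to codimension-$3$ strata, would exclude this case, so you cannot ``assume at least two top-level components'' there; you must (as you in fact do, and as the paper does) run the explicit $h^1(\Gamma)$ case split over codimension $3$ without that assumption, reserving the positive-fiber-dimension argument for codimension $\geq 4$. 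Since your case split catches $(\rm{viii})$ anyway, the conclusion is unaffected, but the intermediate claim should be stated only for codimension $\geq 4$.
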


We remark that in \cite{gendron} the third author analyzed the incidence variety compactification of $\PP\omoduliinc[3,1]{4}$ over the boundary divisors
$\DC_{0}$ and $\DC_{1}$ of $\barmoduli[3]$. In \cite{chen} the second author characterized the Deligne-Mumford compactification of $\pomoduli[3,1](4)$ in $\barmoduli[3,1]$ for curves with at most two nodes. In \cite{fapa} Farkas and Pandharipande also studied the Deligne-Mumford compactification of $\pomoduli[3,1](4)$ for several cases of stable curves with a fixed dual graph. In~\cite{hu} Hu obtained an explicit modular form defining the locus of hyperflexes in $\barmoduli[3]$, that is, the image of $\pomoduli[3](4)^{\rm{odd}}$ in $\barmoduli[3]$.

\subsection{Some strata of meromorphic differentials in genus one}

In this section we describe the boundary of the incidence variety compactification of $\omoduli[1](m,-m)$ and $\omoduli[1](m_{1},m_{2},m_{3})$ in genus one.
\par
First consider the stratum $\PP\omoduli[1](m,-m)$ for $m\geq2$. If a stable curve in $\barmoduli[1,2]$ has a separating node, then it does not lie in the image $\pi_2(\PP\obarmoduliinc[1,\{2\}]{m,-m})$. Otherwise the subcurve on one side of the separating node would have arithmetic genus zero, and by stability it would have to contain both marked points. Then a compatible \twd~on this component would have a zero and a pole at the two marked points, respectively, and have another pole at the node, where both poles would have zero residues by the global residue condition and the residue theorem. But this is impossible by Lemma~\ref{lm:gzeroontresidu}. Thus we only need to consider the open boundary divisor~$\DC_0$ of $\barmoduli[1,2]$ parameterizing irreducible rational nodal curves and
the locus $\Theta$ parameterizing two~$\PP^1$ components intersecting at two nodes, each containing a marked point.

The $\pi_2$-preimage of $\DC_0$ in $\PP\obarmoduliinc[1,\{2\}]{m,-m})$ can be identified with the locus
\[
({\rm j})\subset\PP\omoduliinc[0,\{4\}]{m,-1,-1,-m},
\]
where the two simple poles are unordered, and the meromorphic differentials parameterized in $(\rm{j})$ has zero residues at the pole of order $m$. Equivalently, $(\rm{j})$ parameterizes marked rational curves $(\PP^1,z,p,q^+,q^-)$ such that the degree $m$ cover
from $\PP^1$ to $\PP^1$ induced by $mz - mp$ contains $q^+$ and $q^-$ in the same fiber.

Now we consider the $\pi_2$-preimage of $\Theta$ in $\PP\obarmoduliinc[1,\{2\}]{m,-m}$. Suppose a pointed stable differential $(X, \omega, z, p)$ lies in the $\pi_2$-preimage of $\Theta$, associated to a \twd~$\eta$ compatible with a level graph $\overline{\Gamma}$. The irreducible component of $X$ containing the marked zero $z$ must be of lower level in $\overline{\Gamma}$.  Since there is a marked pole on the top level component, there is no global residue condition in this case. Suppose the zeros of $\eta$ on the top level component are of orders $a$ and $m-2-a$ at the two nodes. Then such $(X, \eta)$ is parameterized by
$$
 \omoduliinc[0,3]{a,m-2-a,-m}\times\omoduliinc[0,3]{m, -2-a, a-m}.
$$
The associated pointed stable differential $(X, \omega, z, p)$ is parameterized by the locus
\[
({\rm jj}_{a})\,\coloneqq\, \PP\omoduliinc[0,3]{a,m-2-a, -m}\times\moduli[0,3],
\]
which is a single point.

In summary, we have obtained the following result.

\begin{prop}\label{prop:genreundeuxpts}
For  $m\geq 2$ the boundary of $\PP\obarmoduliinc[1,2]{m,-m}$ consists of the union of the loci
$({\rm j})$ and $({\rm jj}_a)$ for $a=0,\ldots, \lfloor \frac{m-2}{2}\rfloor$ defined above.
\end{prop}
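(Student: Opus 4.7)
The plan is to apply Theorem~\ref{thm:main} systematically to each boundary stratum of $\barmoduli[1,2]$, following the template of the preceding propositions in this section. First I would enumerate the divisorial boundary of $\barmoduli[1,2]$: this consists of $\DC_0$ and the separating rational-tail divisor, in which both marked points lie on a $\PP^1$ attached at one node to a smooth elliptic curve. The discussion preceding the statement already excludes the separating case by combining the global residue condition on the marks-free elliptic side with the residue theorem on the $\PP^1$, which forces both poles on the $\PP^1$ (the prescribed one at $p$ and the one at the node) to have zero residue, contradicting Lemma~\ref{lm:gzeroontresidu}; and it identifies $\pi_2^{-1}(\DC_0)$ with the locus $({\rm j})$.

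Next I would analyze the codimension $\geq 2$ strata. Using the arithmetic genus constraint $\sum g_v + h^1(\Gamma) = 1$ together with stability, the only stratum whose $\pi_2$-preimage can contribute a new piece of the boundary is $\Theta$. For $\pi_2^{-1}(\Theta)$, Lemma~\ref{lm:minimumComp} forces the level graph to be $X_2 \succ X_1$, where $z \in X_1$ and $p \in X_2$; since the unique top-level component $X_2$ contains the prescribed pole, the global residue condition is vacuous. The matching-order condition then parametrizes compatible twisted differentials by the orders of zeros of $\eta_{X_2}$ at the two nodes, namely $(a, m-2-a)$ with $0 \leq a \leq m-2$, from which the orders of poles of $\eta_{X_1}$ at the nodes are determined. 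Quotienting by the $\mathfrak{S}_2$-action swapping the two nodes identifies $a \leftrightarrow m-2-a$, yielding exactly one locus $({\rm jj}_a)$ for each $a \in \{0, 1, \ldots, \lfloor (m-2)/2 \rfloor\}$.

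Finally, the remaining higher-codimension boundary strata of $\barmoduli[1,2]$ (degenerations of the rational-tail divisor, or graphs obtained from $\DC_0$ or $\Theta$ by adding loops) are excluded by the same residue-theorem contradiction or by Lemma~\ref{lm:degenloop}. The main obstacle is the separating-node argument: Corollary~\ref{cor:pasSep} applies only to holomorphic types, so one must work directly with the residue theorem and Lemma~\ref{lm:gzeroontresidu} to extract the contradiction. Once this exclusion is in place, the enumeration of stable dual graphs of arithmetic genus one with two marked points is very short, and the remaining verification is mechanical.
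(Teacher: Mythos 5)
Your proposal follows the paper's argument essentially step for step: you exclude the separating divisor via the global residue condition and Lemma~\ref{lm:gzeroontresidu} rather than Corollary~\ref{cor:pasSep} (correctly noting the latter requires a holomorphic type), identify $\pi_{2}^{-1}(\DC_{0})$ with the locus $({\rm j})$, reduce the higher-codimension analysis to $\Theta$ using stability and Lemma~\ref{lm:degenloop}, and parametrize $\pi_{2}^{-1}(\Theta)$ by the orders $(a,m-2-a)$ of $\eta_{X_2}$ at the nodes, with $a$ running up to $\lfloor(m-2)/2\rfloor$ because the two nodes are unordered.

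There is one misattributed step. You claim Lemma~\ref{lm:minimumComp} forces the level order $X_{2}\succ X_{1}$ on $\Theta$. That lemma only asserts that a local minimum of the level graph carries a zero or marked point in its smooth locus, and on a curve in $\Theta$ \emph{both} components carry a marked point ($z$ on $X_{1}$, $p$ on $X_{2}$), so the lemma does not distinguish between them and in particular does not rule out $X_{1}\succcurlyeq X_{2}$. The correct justification is a degree count on $X_{1}\cong\PP^{1}$: if $X_{1}\succcurlyeq X_{2}$, condition~(3) gives $\ord_{q_{i}}\eta_{X_{1}}\ge -1$ at both nodes, hence $m+\ord_{q_{1}}\eta_{X_{1}}+\ord_{q_{2}}\eta_{X_{1}}\ge m-2\ge 0$, contradicting $\deg K_{\PP^{1}}=-2$. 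With that replacement your argument is complete and coincides with the paper's.
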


Next we study the strata $\omoduli[1](m_{1},m_{2},m_{3})$, where $m_{1}\geq m_{2}>0$, $m_{3}<0$ and $m_1 + m_2 + m_3 = 0$ (so in particular we must have $m_3\le -2$).
The $\pi_2$-preimage of $\DC_0$ in $\PP\obarmoduliinc[1,\{3\}]{m_{1},m_{2},m_{3}}$ can be identified with the locus
\[
({\rm J})\subset\PP\omoduliinc[1,\{5\}]{m_{1},m_{2},m_{3},-1,-1}
\]
defined by the condition that the residue is zero at the pole of order $m_{3}$.

Suppose a pointed stable differential $(X, \omega, z_1, z_2, z_3)$ lies in the $\pi_2$-preimage of the open boundary divisor in $\barmoduli[1,3]$
parameterizing curves with a separating node. If all three marked points are on the $\PP^1$ component of $X$, any associated \twd~$\eta$ must be holomorphic and nowhere vanishing on the elliptic component of $X$. We thus obtain the locus of such $(X, \omega, z_1, z_2, z_3)$ as
 \[
({\rm JJ})\subset\PP\omoduli[1,1]\times\pi_{2}(\PP\omoduliinc[0,4]{m_{1},m_{2},m_{3},-2})
\]
where the residue at the last pole of order two is required to be zero. The locus $({\rm JJ})$ is one-dimensional, hence it gives rise to boundary components of $\PP\obarmoduliinc[1,\{3\}]{m_{1},m_{2},m_{3}}$.

If the $\PP^1$ component contains the two marked zeros $z_1$ and $z_2$, and the elliptic component contains the marked pole $z_3$, then
such $(X, \omega, z_1, z_2, z_3)$ is parameterized by the locus
\[
({\rm JJJ}_{m_{3}})\,\coloneqq\,\PP\omoduliinc[1,2]{-m_{3},m_{3}} \times\pi_2(\PP\omoduliinc[0,3]{m_1, m_2, m_3-2}).
\]
(with a further symmetrization $\PP\omoduliinc[0,\{3\}]{m_1, m_2, m_3-2})$ if $m_1=m_2$).
If the zero of order $m_{i}$ lies on the elliptic component for $i=1$ or $2$, then the $\PP^1$ component must be of higher level. Since it contains a marked pole, by Theorem~\ref{thm:main} there is no global residue condition imposed on an associated \twd~$\eta$. Hence such $(X,\eta)$ are parameterized by
$$
 \omoduliinc[1,2]{m_{i},-m_{i}}\times\omoduliinc[0,3]{m_{3-i},m_{3},-2+m_{i}}.
$$
The associated $(X, \omega, z_1, z_2, z_3)$ are parameterized by the locus.
\[
({\rm JJJ}_{m_{i}})\,\coloneqq\,\pi_2(\PP\omoduliinc[1,2]{m_{i},-m_{i}})\times\PP\omoduliinc[0,3]{m_{3-i},m_{3},-2+m_{i}}.
\]

Now suppose $(X, \omega, z_1, z_2, z_3)$ lies in the $\pi_2$-preimage of the locus of curves with two $\PP^1$ components intersecting at two nodes. If both marked zeros lie on the same component, suppose the zeros of $\eta$ at the two nodes on the other component are of orders $a$ and $-m_3-2-a$, respectively,
for $0\leq a \leq \lfloor \frac{-m_3-2}{2}\rfloor$. The associated pointed stable differentials are parameterized by the locus
\[
({\rm JV}_{m_{3}}^{a})\,\coloneqq\,\PP\omoduliinc[0,3]{a, -m_{3}-2-a, m_{3}}\times\pi_{2}(\PP\omoduliinc[0,4]{m_{1}, m_{2}, -a-2, a+m_3})
\]
(with a further symmetrization if $-a-2=a+m_3$).
Similarly if one component of $X$ contains only the marked zero~$z_i$ for $i=1$ or $2$, suppose the zero orders of $\eta$ at the nodes of the other component are $a$ and $m_i-2-a$, respectively, for $0\leq a \leq \lfloor \frac{m_i-2}{2}\rfloor$. Then the associated pointed stable differentials are parameterized by the locus
\[
({\rm JV}_{m_{i}}^{a})\,\coloneqq\,\PP\omoduliinc[0,3]{m_i, -a-2, a-m_i}\times\pi_{2}(\PP\omoduliinc[0,4]{m_{3-i}, a, m_i-a-2, m_{3}}).
\]

All the loci $({\rm JJJ}_{m_{i}})$ and $({\rm JV}_{m_{i}}^{a})$ for $i=1,2,3$ are one-dimensional, hence they give boundary components of $\PP\obarmoduliinc[1,3]{m_{1},m_{2},m_{3}}$.

We claim that there are no further boundary components of $\PP\obarmoduliinc[1,3]{m_{1},m_{2},m_{3}}$. By
Lemma~\ref{lm:degenloop} we only need to consider the cases when each irreducible component of $X$ is smooth. Over any boundary stratum of codimension three in $\barmoduli[1,3]$, in order to obtain a boundary component of $\PP\obarmoduliinc[1,3]{m_{1},m_{2},m_{3}}$, the $\pi_2$-fibers need to be positive dimensional, hence any associated \twd~has at least two top level components in a compatible level graph. Each such top level component needs to carry a marked pole, which is impossible because there is only one marked pole. The only remaining codimension two boundary stratum of $\barmoduli[1,3]$ parameterizes stable curves consisting of three components $X_0$, $X_1$, and $X_2$, where $X_0\cong X_2 \cong \PP^1$, $X_1$ is of genus one, $X_0$ intersects $X_1$ and $X_2$ each at a node, $X_0$ contains a marked point, $X_2$ contains two marked points, and $X_1$ is unmarked. In this case $X_1\succ X_0$ for any compatible $\eta$ in a level graph. The global residue condition implies that $\eta$ on $X_0$ has zero residues at the poles, hence it is a meromorphic differential on $\PP^1$ that has a unique zero and two poles with zero residues. But this is impossible by Lemma~\ref{lm:gzeroontresidu}.

In summary, we have proved the following result.
\begin{prop}
The boundary of $\PP\obarmoduliinc[1,\{3\}]{m_{1},m_{2},m_{3}}$ with $m_{1}\geq m_{2}>0$ and $m_{3}<0$ is the union of the closures of the loci
$({\rm J})$, $({\rm JJ})$, $({\rm JJJ}_{m_{i}})$, and $({\rm JV}_{m_{i}}^{a})$ for $i=1,2,3$ defined above.
\end{prop}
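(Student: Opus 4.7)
The strategy is to apply Theorem~\ref{thm:main} and stratify the boundary of $\barmoduli[1,3]$ by topological type, then for each stratum compute the $\pi_2$-preimage in the incidence variety compactification by enumerating all level graphs and compatible twisted differentials. Since the boundary of $\PP\obarmoduliinc[1,\{3\}]{m_1,m_2,m_3}$ is divisorial (of complex dimension one here), boundary components are one-dimensional; we need to find all such loci and check that nothing else contributes. As a preparatory reduction, Lemma~\ref{lm:degenloop} allows us to ignore all dual graphs with an internal (loop) edge, since these are limits of lower-codimension strata.

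First I would handle the two open boundary divisors of $\barmoduli[1,3]$. Over $\DC_0$, the underlying curve is an irreducible rational nodal curve; the level graph is trivial, and the compatible twisted differentials are meromorphic differentials on $\PP^1$ with two additional simple poles (the two branches of the node) whose residues sum to zero by condition~(2). The global residue condition~(4) applied to the unique top-level vertex (in the normalization) forces the residue at the marked pole of order $m_3$ to vanish, yielding the locus $(\mathrm{J})$. Over the open divisors whose generic curve has a separating node, I would enumerate how the three marked points distribute between the two components. When all three lie on the $\PP^1$ component, the elliptic component carries a holomorphic nowhere-vanishing differential (so it is of top level) and we land in $(\mathrm{JJ})$, with the global residue condition at the unique node giving the imposed vanishing. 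When the marked pole lies on the elliptic component and both zeros on $\PP^1$, we obtain $(\mathrm{JJJ}_{m_3})$; when exactly one zero $z_i$ lies on the elliptic component, the $\PP^1$ component must be of higher level and contains the marked pole, so no global residue condition is imposed by Definition~\ref{def:twistedAbType}(4), yielding $(\mathrm{JJJ}_{m_i})$.

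Next I would treat the codimension-two boundary stratum $\Theta$ parameterizing pairs of $\PP^1$'s glued at two nodes. By Corollary~\ref{cor:oneminimum} the component bearing a marked point of interest (or the natural level choice forced by pole orders) determines $\overline\Gamma$; the orders of the zeros of $\eta$ at the two nodes on the higher component form a partition of $m_i-2$ (or $-m_3-2$), parametrized by the integer $a$, giving the families $(\mathrm{JV}_{m_i}^{a})$. Each of these loci is one-dimensional and hence supplies boundary components.

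The main obstacle is ruling out the remaining higher-codimension strata. Any contribution from a codimension $\geq 3$ stratum requires the fibers of $\pi_2$ to be positive-dimensional, and by Lemma~\ref{lm:dimFibrePiDeux} this forces at least two top-level components. In our setting this cannot happen: condition~(4) together with the fact that there is a single marked pole means that no top-level component can avoid having the global residue condition impose an impossible constraint, which I would exclude by invoking Lemma~\ref{lm:gzeroontresidu}. The single remaining codimension-two dual graph to check is a chain $X_2\cup X_0\cup X_1$ with $X_0,X_2\cong\PP^1$ and $X_1$ elliptic, with one marked point on $X_0$ and two on $X_2$; here the level order forces $X_1\succ X_0$, and the global residue condition at the level of $X_0$ together with the residue theorem on $\PP^1$ forces $\eta_0$ to have a unique zero and two poles with vanishing residues, contradicting Lemma~\ref{lm:gzeroontresidu}. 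Finally I would also invoke Corollary~\ref{cor:pasSep} to discard any configuration in which a rational component carries only separating nodes at which $\eta$ is polar. Collecting these cases yields exactly the enumerated list, completing the proof.
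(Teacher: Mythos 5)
Your proof follows the same route as the paper's: stratify the boundary of $\barmoduli[1,3]$ by dual graph (discarding internal nodes via Lemma~\ref{lm:degenloop}), enumerate compatible twisted differentials over each remaining stratum to recover the loci $(\mathrm{J})$, $(\mathrm{JJ})$, $(\mathrm{JJJ}_{m_i})$, $(\mathrm{JV}_{m_i}^a)$, and rule out higher-codimension contributions by combining Lemma~\ref{lm:dimFibrePiDeux} (positive-dimensional $\pi_2$-fibers require at least two connected top-level pieces) with Lemma~\ref{lm:gzeroontresidu} and Corollary~\ref{cor:pasSep}. Two small slips are worth flagging, neither of which changes the outcome: over $\DC_0$ the level graph has a single vertex, so the global residue condition~(4) is vacuous --- the vanishing of the residue at the pole of order $m_3$ is forced instead by condition~(2) together with the residue theorem on the genus-zero normalization --- and Corollary~\ref{cor:oneminimum} is stated only for the one-pointed stratum $\proj\omoduli[g](2g-2)$, so it does not directly determine the level order over the stratum of two $\PP^1$'s meeting at two nodes; one should instead invoke Lemma~\ref{lm:minimumComp} or argue directly from the degree of the divisor on each $\PP^1$, as the paper does.
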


\subsection{Weierstra\ss{} divisors on a binary curve}
\label{subsec:weierstrass}

In this section, we apply our main result to study the limits of  Weierstra\ss{} points on so-called {\em binary curves}. Recall that a binary curve is a curve that consists of two smooth irreducible components intersecting in a number of nodes.
Suppose $X$ is a smooth and connected curve of genus~$g$. Recall that a point $z_0 \in X$ is a Weierstra\ss{} point if $h^0(X, gz_0) \geq 2$. Equivalently, this is to say that there exists an effective canonical divisor
$m_0 z_0 + m_1 z_1 + \cdots + m_n z_n$ on~$X$ such that $m_0 \geq g$. We call such a divisor a
\emph{Weierstra\ss{} divisor of type $\mu$}, where $\mu = (m_0, \ldots, m_n)$ with $m_0 \geq g$. Note that Weierstra\ss{} divisors of type $\mu$ correspond to pointed differentials in the stratum $\PP\omoduli[g, n+1](\mu)$.

Degenerations of Weierstra\ss{} points and divisors have been studied intensively, see e.g.~\cite{eihaweier} for limits of Weierstra\ss{} points on nodal curves of compact type. Since the theory of limit linear series used in \cite{eihaweier} does not work in full generality for curves of non-compact type, it is more complicated to study limits of Weierstra\ss{} points on a curve of non-compact type. As a consequence of Theorem~\ref{thm:main}, in what follows we give an efficient description for the degeneration of Weierstra\ss{} divisors on certain binary curves, which recovers a main case treated by \cite{esme}.

Let $(X, \omega, z_0, \ldots, z_n)$ be a stable pointed differential of genus $g$, where $X$ consists of two genus one curves $E_1$ and $E_2$ attached at $g-1$ nodes $q_1, \ldots, q_{g-1}$. We want to study when $(X, \omega, z_0, \ldots, z_n)$ is contained in the incidence variety compactification of $\PP\omoduli[g, n+1](\mu)$. Without loss of generality, suppose $E_1$ contains $z_0$. Suppose $\overline{\Gamma}$ is a level graph on $X$ and $\eta$ is a compatible \twd~such that it is associated with $(X, \omega, z_0, \ldots, z_n)$. Let $\eta_i$ be the restriction of $\eta$ on $E_i$ for $i= 1, 2$.

If $E_1 \succcurlyeq E_2$, then $\ord_{q_j} (\eta_{1}) \geq -1$ for all $j = 1,\ldots, g-1$. Note that
$${\rm div}(\eta_1) \=  \sum_{z_i \in E_1} m_i z_i  + \sum_{j=1}^{g-1} \ord_{q_j} (\eta_{1}) \cdot q_j. $$
Since $z_0 \in E_1$, it follows that
$$\deg ({\rm div}(\eta_1)) \geq m_0 - (g-1) > 0 \= \deg K_{E_1},$$
which contradicts that $\eta_1$ is a (possibly meromorphic) differential on the genus one curve $E_1$.

Consider the remaining case $E_1 \prec E_2$ in $\overline{\Gamma}$. Then $\ord_{q_j} (\eta_{2}) \geq 0$ for all $j$. Since
$${\rm div}(\eta_2) \=  \sum_{z_i \in E_2} m_i z_i  + \sum_{j=1}^{g-1} \ord_{q_j} (\eta_{2}) \cdot q_j, $$
in order to ensure $\deg ({\rm div}(\eta_2)) = 0$, the only possibility is that $\ord_{q_j} (\eta_{2}) = 0$ for all $j$ and that
$E_2$ does not contain any marked point $z_i$. Therefore, we conclude that
$$ {\rm div}(\eta_1) \= \sum_{i=0}^n m_i z_i - 2 \sum_{j=1}^{g-1} q_j, $$
hence necessarily $\sum_{i=0}^n m_i z_i \sim 2 \sum_{j=1}^{g-1} q_j$ on $E_1$, and $\eta_2$ is holomorphic and nowhere vanishing on $E_2$.

Conversely if the above linear equivalence relation holds, the global residue condition imposed on $\eta$ follows from the residue theorem on $E_1$. In that case, take $\omega|_{E_1} = 0$ and $\omega|_{E_2} = \eta_2$. Then $(X, \omega, z_0, \ldots, z_n)$ lies in the incidence variety compactification of $\PP\omoduli[g, n+1](\mu)$ by Theorem~\ref{thm:main}.

Applying the above method, it is clear that our compactification can be used to determine the degeneration of Weierstra\ss{} divisors on stable pointed curves of any given type.

\section{Plumbing and gluing differentials}\label{sec:proof}

\subsection{Proof of the main theorem: conditions are necessary}
First we show how to obtain a collection of meromorphic differentials associated
to any boundary point of the incidence variety compactification. Suppose $f:\famcurv \to\Delta$ is a family of pointed curves over a sufficiently small disk~$\Delta$ whose central fiber is nodal, along with (possibly meromorphic) differentials of type $\mu = (m_1, \ldots, m_n)$ on the fibers $\famcurv^\ast\to\Delta^\ast$ over the punctured disk $\Delta^\ast\coloneqq\Delta\setminus\lbrace 0\rbrace$. We denote by $X_t$ the fiber of~$f$ over~$t\in \Delta^\ast$, and by $X=X_0$ the central fiber. The marked points in the family are recorded as sections $\seczero_1,\ldots,\seczero_n:\Delta\to\famcurv$ that correspond to the zeros and poles of the differentials. The family of stable differentials on $\calX$ can then be viewed as a section
$\famomega:\Delta^\ast\to f_*\dualsheave[\famcurv^\ast/\Delta^\ast](-\seczero_{\rm pol})$ where
$\seczero_{\rm pol} = \sum_{m_i < 0} m_{i}\seczero_{i}$ is the polar part
of $\mu$.
\par
The result below shows that for any irreducible component $X_v$ of the central fiber, the family of differentials $\famomega$ over $\Delta^\ast$ can be rescaled by a power of $t$ so that the limit will exist and will not be identically zero on $X_v$.
\begin{lm} \label{le:scalinglimit}
Let $\left(f:\famcurv \to\Delta,\,\famomega,
\seczero_{1},\ldots,\seczero_{n}\right)$
be a family of pointed differentials over a disk $\Delta$ described above. Then for every irreducible component $X_v$ of the central fiber $X$, there exists a
unique integer $\ell_v$ such that for a generic section
$s:\Delta^{\ast}\to\famcurv$ with $\bar{s}(0)\in X_v$ there exists a nonzero limit
\begin{equation} \label{eq:scalinglimit}
\eta_v \,\coloneqq \,\lim_{t\to 0} \, t^{\ell_v}\, \famomega\left(t,s(t)\right) \neq 0\,.
\end{equation}
\end{lm}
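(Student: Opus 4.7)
The plan is to regard $\famomega$ as a meromorphic section of the line bundle $\calL \coloneqq \omega_{\calX/\Delta}(-\seczero_{\rm pol})$ on the total space $\calX$, and then read off $\ell_v$ from its order along the prime Weil divisor $X_v\subset \calX$. To set this up I would first extend $\famomega$ across $t=0$: since $f_{\ast}\calL$ is coherent on $\Delta$ and locally free over $\Delta^{\ast}$, the section $\famomega$ extends meromorphically across the origin, so after multiplication by a suitable power of $t$ we may view $\famomega$ as a meromorphic section of $\calL$ on all of $\calX$ with poles supported on components of the central fiber.

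Next, for each irreducible component $X_v$ of $X$, the local ring $\calO_{\calX, X_v}$ at its generic point is a discrete valuation ring, because $\calX$ is smooth at generic points of $X_v$ (any singularities of $\calX$ are isolated and lie at the nodes being smoothed). Since $\calL$ is a line bundle, it is free of rank one over this DVR, so $\ord_{X_v}(\famomega)$ is a well-defined integer; I would then set $\ell_v \coloneqq -\ord_{X_v}(\famomega)$. After a base change if necessary to ensure the central fiber is reduced we have $\ord_{X_v}(t)=1$, so $t^{\ell_v}\famomega$ has valuation zero along $X_v$ and restricts to a nonzero section of $\calL|_{X_v}$. By the standard description of the restriction of the relative dualizing sheaf to a component of a nodal curve, this restriction is a (possibly meromorphic) differential $\eta_v$ on $X_v$, with the prescribed polar behavior at the marked polar points lying on $X_v$ and at worst simple poles at the nodes where $X_v$ meets other components.

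For a generic section $s$ with $\bar s(0)\in X_v$, one may choose $\bar s(0)$ to be a smooth point of $X_v$ away from nodes, marked points, and zeros or poles of $\eta_v$. In local coordinates $(z,t)$ with $X_v=\{t=0\}$, write $\famomega = t^{-\ell_v} g(z,t)\,dz$ with $g$ holomorphic and $g(\cdot,0)\not\equiv 0$; then $t^{\ell_v}\famomega$ evaluated along $s$ converges to $g(\bar s(0),0)\,dz\neq 0$, which identifies the limit with $\eta_v(\bar s(0))$. Uniqueness of $\ell_v$ is then immediate, since any smaller integer produces a divergent limit and any larger one produces the zero limit. The main technical obstacle is giving the valuation $\ord_{X_v}(\famomega)$ a precise meaning in the presence of possible singularities of $\calX$ at smoothed nodes, and then identifying the restricted section $\eta_v$ as a genuine differential on $X_v$ rather than an abstract element of a line bundle fiber; both are resolved by working at the generic point of $X_v$, where $\calX$ is smooth and $\calL$ is free of rank one.
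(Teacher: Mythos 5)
Your argument is essentially the paper's: define $\ell_v$ as minus the valuation of $\famomega$ along $X_v$, using that the local ring $\calO_{\calX,X_v}$ is a DVR with uniformizer $t$ because the central fiber is reduced and $\calX$ is regular at the generic point of $X_v$ (the paper invokes normality/Serre's $R_1$, you invoke smoothness away from the nodes; these amount to the same observation here). One small caution on your preliminary step: coherence of $f_*\calL$ by itself does not imply that a holomorphic section over $\Delta^*$ extends meromorphically across $0$ --- one still needs a growth bound near the puncture --- but this is harmless in context, since $\famomega$ is chosen as a lift of a family that already converges in the projectivized bundle $\PP K\barmoduli[g,n](\tilde\mu)$ over all of $\Delta$, which supplies the control and is what the paper implicitly relies on.
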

We refer to the exponent $\ell_v$ in the lemma as the {\em scaling parameter}
of the component~$X_v$ and to the $\eta_v$ as the {\em scaling limits}. If one re-parameterizes the base disk, the scaling limits on the irreducible components with the same value of the scaling parameter will be multiplied by the same nonzero scalar.
\begin{proof}
Since $\mathcal{X}$ is normal it is regular in codimension-one, so the local ring of $\mathcal{O}_{\mathcal X}$ at $X_v$ is
a discrete valuation ring (DVR) (see e.g.,~\cite[Theorem 9.3]{atmcd}).
As the  fiber $f^{-1}(0)$ is reduced, $t$ is a uniformizing parameter
for this DVR. It follows that any section $s$ of a line bundle over $\mathcal{X}$ is of the form
$t^{a_v} g$, where $g$ is another local holomorphic section of the same line bundle over $\mathcal{X}$
which does not vanish on $X_v$. In these terms
$a_v$ is just the order of vanishing of $s$ along $X_v$, and $\ell_v$ in the claim is equal to $- a_v$.
\end{proof}
We now show that the collection of these scaling limits gives a \twd\ satisfying all the conditions.
\begin{proof}[Proof of Theorem~\ref{thm:main}: conditions are necessary]
Suppose a pointed stable differential $(X,\omega,z_1,\ldots,z_n)$ lies
in the incidence variety compactification of $\PP\omoduli(m_1,\ldots,m_n)$.
Take a family of pointed stable differentials over a disk such that
$(X,\omega,z_1,\ldots,z_n)$ is the central fiber and choose an arbitrary
lifting of the family of differentials over $\Delta^\ast$, defined up to scale,
to an actual family of differentials. Define the collection of meromorphic
differentials $\lbrace\eta_v\rbrace$ on the irreducible components~$X_v$
of~$X$ as the scaling limits given by
Lemma~\ref{le:scalinglimit}. Then the scaling parameters $\ell_v$ can be used as a level function on the vertices of the dual graph $\Gamma$ of $X$, and thus define
a level graph $\overline\Gamma$. Note that multiplying the section $\famomega$ globally by a power of
$t$ results in adding a constant to all the scaling parameters $\ell_v$. To ensure that
the last statement of condition~(i) in Theorem~\ref{thm:main} holds, we perform such
a global rescaling of $\famomega$ by a power of $t$ so that the maximum scaling parameter (corresponding to the top
components) is equal to $\ell_v=0$, and all other scaling parameters are thus negative. From now on we fix such a choice, and show
that the collection $\eta \coloneqq \{\eta_v\}$ gives a twisted differential compatible with~$\overline{\Gamma}$.
\par
First, note that in the incidence variety compactification
the prescribed zeros and poles are marked points, which are not allowed to collide
with a node of the central fiber. Thus the vanishing orders of $\eta$ at all the marked points~$z_i$ are the same as in the family, so $\eta$ satisfies condition~(0).
\par
Next we prove conditions~(1) and (2). Suppose $q$ is a node joining two irreducible
components~$X_{v_1}$ and~$X_{v_2}$ of the central fiber~$X$. Take a neighborhood~$U$ of the
node~$q$ and choose local coordinates~$x, y$ such that $U \cong V(xy-t^a)$ for some $a \in \mathbb{N}$.
We pick a differential~$\xi$ that generates $\omega_{\famcurv/\Delta}$ and that vanishes
nowhere on $U$, for example
$$\xi =\frac{dx}{x} = -\frac{dy}{y}.$$
To establish conditions (1) and (2), we use that the orders of vanishing and residues of $\famomega$
in each branch can be detected by the turning numbers and periods of loops in these branches.  We
define two families of loops $\gamma_t^1, \gamma_t^2 \colon [0, 2 \pi] \to X_t$ by
\begin{align*}
  \gamma_t^1(\tau) &= (r e^{i\tau}, t^a / re^{i\tau}, t^a) \\
  \gamma_t^2(\tau) &= (t^a/r e^{i\tau}, re^{i\tau}, t^a) \\
\end{align*}
for some sufficiently small $r>0$.  The orders of vanishing are the turning numbers
\begin{equation}
  \label{eq:turning_number}
 \ord_q \eta_{v_j} = \Turn_{\eta_j} (\gamma_0^j) - 1,
\end{equation}
where the turning number $\Turn_{\eta} (\gamma)$ of a curve $\gamma$ with respect to a one-form
$\eta$ is the degree of the Gauss map
\begin{equation*}
  G(\tau)= \frac{ \eta( \gamma'(\tau))}{|\eta( \gamma'(\tau))|}.
\end{equation*}
  As the
$\eta_i$ are rescalings of $\famomega$, we have
\begin{equation*}
  \Turn_{\eta_j} (\gamma_0^j) = \lim_{t\to 0} \Turn_{\famomega(t)}( \gamma_t^j).
\end{equation*}
 For $t$ nonzero, the curve $\gamma_t^1$ is homotopic to $-\gamma_t^2$ in an
annular domain of $X_t$ in which the form $\famomega$ is nonzero, so $\gamma_t^1$ and $\gamma_t^2$
have opposite turning numbers.  It then follows from \eqref{eq:turning_number} that
\begin{equation*}
  \ord_q \eta_{v_1} + \ord_q \eta_{v_2} = -2,
\end{equation*}
which is condition (2).
\par
When these are both simple poles, the same argument gives that the
$\gamma_t^j$ have opposite periods for $t\neq 0$, and by taking a limit, we see that the
$\gamma_0^j$ have opposite periods.  The $\eta_{v_j}$ then have opposite residues at $q$, which is
condition (1).
\par
Alternatively, conditions (1) and (2) can be explained from the viewpoint of limit linear series (see \cite{chen, fapa}). The equation
$xy = t^a$ gives an $A_{a-1}$-singularity of $U$ at the node $q$, which can be resolved by blowing up and inserting a chain of $a-1$
rational curves $R_1, \ldots, R_{a-1}$ between the two branches $X_{v_1}$ and $X_{v_2}$. Since the $R_i$ are Cartier divisors after blowing up, one can use them to twist the relative dualizing line bundle such that the limit twisted differential corresponds to
a section of some twisted relative dualizing line bundle. Locally twisting by a branch at a node makes the zero or pole order increase by one in one branch and
decrease by one in the other branch, hence their sum remains to be $-2$ as in the case of the relative dualizing line bundle before twisting. Then condition (2) follows from applying this observation to the nodes between $X_{v_1}, R_1, \ldots, R_{a-1}, X_{v_2}$ and the fact that the relative dualizing line bundle restricted to $R_i$ has degree zero. Similarly one can see condition (1) this way, as it holds for the relative dualizing line bundle without twisting.
\par
Finally we focus on conditions~(3) and~(4) in Definition \ref{def:twistedAbType}, which
depend on the level graph.  By definition of the scaling limits, we have
$$\eta_{v_1} \= \lim_{t\to 0} t^{\ell_1} \famomega (t) |_{X_{v_1}}, \quad \eta_{v_2} \= \lim_{t\to 0} t^{\ell_2}
\famomega (t) |_{X_{v_2}}. $$
By definition of the full order, $\ell_1 \ge \ell_2$ if and only if $v_1\succcurlyeq v_2$. If
$\eta_{v_1} = x^{-1+k} dx$ for some non-negative integer $k$ (and hence $\eta_{v_2} = y^{-1-k} dy$), then
combining these conditions implies $\ell_2 - \ell_1 = k\,a \geq 0$.
Moreover $k=0$ if and only if $\ell_1 = \ell_2$, namely $v_1\asymp v_2$, as claimed in condition~(3).
\par
For the global residue condition~(4), as in its statement, let $Y$ be a connected
component of $X_{>L}$ that contains no marked poles. Let $q_1,\ldots, q_b$ denote the nodes where~$Y$ meets $X_{=L}$ and $q_{b+1},\ldots,q_{b+c}$ be the nodes where~$Y$ meets the other irreducible components of $X$.
Since $Y$ is a connected
component of~$X_{> L}$ and since all components of~$X_{=L}$
have already been accounted for, it follows that $q_{b+1}^-,\ldots, q_{b+c}^-\in X_{<L}$.
\par
For each $j=1,\ldots,b+c$, let $V_{q_j}(t) \in H_1(X_t, \ZZ)$ be the
family of vanishing cycles, i.e., $V_{q_j}(t)$ is
a simple loop on $X_t$~for $t\ne 0$ that shrinks to the point~$q_j^+$ as $t\to 0$.
We cut the surfaces $X_t$ along all the vanishing cycles $V_{q_j}(t)$
and obtain two subsurfaces $Y_t$ and $Z_t$, which we label so that
$Y_t$ converges to $Y$ and the (possibly disconnected) subsurface $Z_t$
converges to the complement $X\setminus Y$. Hence $Y_t$ is a Riemann surface with boundary
$\cup_{j=1}^{b+c} V_{q_j}(t)$. We provide $V_{q_j}(t)$ with the
counterclockwise orientation induced by the flat structure on $Y_t$.
Since $Y_t$ contains no prescribed pole, by Stokes' theorem we conclude that
\be \label{eq:diskIntegral}
\sum_{j=1}^{b+c} \int_{V_{q_j(t)}} \famomega(t)|_{Y_t} \= 0 \quad \text{for any}
\quad t \neq 0\,.
\ee
Since $\famomega(t)$ is a family of differentials on the whole surface $X_t$ for $t\neq 0$,
we can as well consider it on $Z_t$ and multiply by $t^L$ to obtain
\be \label{eq:diskIntegral2}
\sum_{j=1}^{b+c} \int_{V_{q_j(t)}} t^L \famomega(t)|_{Z_t} \= 0 \quad \text{for any} \quad t \neq 0\,.
\ee
Now we observe that $t^L \famomega(t)|_{Z_t}$ converges, as $t\to 0$, to $\eta_{v}$ for
every component~$X_v\subset X_{=L}$, and converges to zero on $X_{<L}$.
Since the limit of the loops $V_{q_j}(t)$ does not lie on any
component of level~$>L$ by construction, this means taking the limit as
$t \to 0$ of~\eqref{eq:diskIntegral2} gives precisely the global residue
condition~(4).
\end{proof}

\subsection{Deformations of standard coordinates}
\label{sec:standardform}

In this section and next, we will establish a number of technical results that are needed for proving the sufficiency of the conditions in the main theorem.
\par
The two obvious local conformal invariants of a holomorphic one-form $\omega$ defined in a small disk containing $0$ are its order of
vanishing  $k=\ord_0\omega$ and its residue $r=\Res_0\omega$.  It is somewhat less obvious that these are
the only local invariants of $\omega$.  More precisely, there is a nearly canonical change of
coordinates which puts $\omega$ in a standard normal form depending only on $k$ and $r$.
If $\omega$ is a holomorphic one-form on a small disk, and has a zero of order $k$ at the origin, one can
take a coordinate $z$ such that
$$z(p)^{k+1} = (k+1)\int_0^p\omega, $$
which makes $\omega = z^kdz$.
For meromorphic one-forms, the residue needs to be accounted for, and the general statement
is the following.
\par
\begin{prop} \label{prop:standard_coordinates}
Given any meromorphic one-form $\omega$ on a sufficiently small disk $V$ around $0$, denote
$k\coloneqq\ord_0\omega$ and $r\coloneqq\Res_0\omega$. Then there exists a conformal map
$\phi\colon(\Delta_R, 0) \to (V, 0)$ defined on a disk of sufficiently small radius $R$ such that
\begin{equation}\label{eq:standard_coordinates}
    \phi^*\omega \=
    \begin{cases}
      z^k\, dz &\text{if $k\geq 0$,}\\
      \frac{r}{z}dz &\text{if $k = -1$,}\\
      \left(z^k + \frac{r}{z}\right)dz &\text{if $k < -1$.}
    \end{cases}
 \end{equation}
The germ of $\phi$ is unique up to multiplication by a $(k+1)$st root of unity when $k > -1$,
and up to multiplication by a nonzero constant if $k=-1$.

Moreover for a neighborhood $U$ of the origin $\bfzero\in \CC^n$, let $\omega_\bfu$ be a family of meromorphic differentials on $V$ that vary holomorphically with the base parameter $\bfu \in U$ and that satisfy $\ord_0\omega_\bfu \equiv k$. Then in some
neighborhood of any point in $U$, one can find for some $R>0$ a family of conformal maps $\phi_\bfu\colon(\Delta_R, 0) \to (V, 0)$, varying holomorphically with
$\bfu$, such that $\phi_\bfu^* \omega_\bfu$ is in the standard form of \eqref{eq:standard_coordinates}.
\end{prop}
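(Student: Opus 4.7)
The plan is to split into three cases based on $k=\ord_{0}\omega$ and reduce each to either an explicit construction or the implicit function theorem. For $k\ge 0$, set $F(z)=(k+1)\int_{0}^{z}\omega=z^{k+1}(a_{k}+O(z))$, where $a_{k}\neq 0$ is the leading Laurent coefficient; a local $(k+1)$-st root $w:=F(z)^{1/(k+1)}$ exists and is unique up to a root of unity, giving a local biholomorphism whose inverse $\phi$ satisfies $\phi^{*}\omega=w^{k}\,dw$ by direct differentiation. For $k=-1$, the hypothesis $\ord_{0}\omega=-1$ forces $r\neq 0$; writing $\omega=(r/z+g(z))\,dz$ with $g$ holomorphic and primitive $G$, set $w:=cz\exp(G(z)/r)$ for any nonzero constant $c$, which is a local biholomorphism since its derivative at $0$ is $c\neq 0$, and a direct calculation verifies $\phi^{*}(r\,dw/w)=\omega$, the scalar $c$ accounting for the stated uniqueness up to multiplicative constants.

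The main case is $k\le -2$. Since $\omega-(r/z)\,dz$ has vanishing residue at $0$, it admits a single-valued meromorphic primitive $A(z)$ with a pole of order $-k-1$; writing $A(z)=z^{k+1}h(z)$ one has $h$ holomorphic with $h(0)=a_{k}/(k+1)\neq 0$. Seeking $\phi(w)=w\psi(w)$ with $\psi$ holomorphic and nonvanishing near $0$, the desired identity $\phi^{*}\omega=(w^{k}+r/w)\,dw$ integrates on the punctured disk to the single-valued equation
\[
A(w\psi(w))+r\log\psi(w)\=\frac{w^{k+1}}{k+1}+C.
\]
Multiplying by $w^{-(k+1)}$, a strictly positive power of $w$, converts this into a holomorphic equation $\Psi(w,\psi)=0$ whose restriction to $w=0$ reads $\psi(0)^{k+1}h(0)=1/(k+1)$, i.e.\ $\psi(0)^{k+1}=1/a_{k}$. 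The partial derivative $\partial_{\psi}\Psi$ at $(0,\psi(0))$ equals $(k+1)\psi(0)^{k}h(0)\neq 0$, so the implicit function theorem produces a holomorphic solution $\psi(w)$ with the prescribed initial value, and $\phi(w)=w\psi(w)$ is the required conformal map.

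For the uniqueness in the cases $k\ge 0$ and $k=-1$, the local biholomorphisms fixing $0$ and preserving the respective standard form are computed directly: the equation $\psi(w)^{k}\psi'(w)=w^{k}$ forces $\psi(w)=\zeta w$ with $\zeta^{k+1}=1$, while $\psi'(w)/\psi(w)=1/w$ forces $\psi(w)=cw$ for any $c\neq 0$. For the parametric version, the hypothesis $\ord_{0}\omega_{\bfu}\equiv k$ guarantees that the leading Laurent coefficient $a_{k}(\bfu)$ (and, for $k=-1$, the residue $r(\bfu)$) remains nowhere vanishing, so local holomorphic branches of the $(k+1)$-st root and the logarithm can be chosen to depend holomorphically on $\bfu$ on any simply connected neighborhood of a base point; the integration formulas are manifestly holomorphic in $\bfu$, and the implicit function theorem admits the standard parametric version. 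I expect the main subtleties to be the careful bookkeeping in the case $k\le -2$ with $r\neq 0$ to verify that $\Psi$ is holomorphic on a genuine neighborhood of $(0,\psi(0))$ and that $\partial_{\psi}\Psi\neq 0$ there, and to check that the radius $R$ of $\Delta_{R}$ can be taken uniform in $\bfu$, which follows from the continuous dependence of the radius in the parametric implicit function theorem.
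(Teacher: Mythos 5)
Your proof is correct, and it is genuinely more informative than what the paper provides: the paper simply cites Strebel's book for the coordinate construction (``a solution to a suitable ordinary differential equation'') and asserts holomorphic parameter dependence, whereas you work out the argument from scratch. Your reduction is clean: for $k\ge 0$ the explicit $(k+1)$-st root of $(k+1)\int_0^z\omega$ gives $\phi^{-1}$ directly; for $k=-1$ the explicit formula $w=cz\exp(G(z)/r)$ works because $r\neq 0$; and for $k\le -2$ the crucial move is to integrate the desired identity $\phi^*\omega=(w^k+r/w)\,dw$ to the single-valued relation $A(w\psi)+r\log\psi=w^{k+1}/(k+1)+C$ and then clear the singularity by multiplying by the \emph{positive} power $w^{-(k+1)}$, after which the implicit function theorem applies at $(0,\psi_0)$ with $\psi_0^{k+1}=1/a_k$ and non-vanishing $\psi$-derivative $\psi_0^k a_k=1/\psi_0$. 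The computation that the integrated relation is equivalent to the ODE is the only step that warrants writing out explicitly (the $-r/w$ from $A'=f-r/z$ composed with $\phi=w\psi$ splits into $-r/w-r\psi'/\psi$, and the second piece cancels the $r\,d(\log\psi)/dw$ term), and you could note that the constant $C$ plays no role because the $w^{-(k+1)}$ factor kills it at $w=0$. The parametric statement follows as you say, since $a_k(\bfu)$ (and $r(\bfu)$ when $k=-1$) is nowhere vanishing, holomorphic branches of the $(k+1)$-st root and the logarithm can be chosen on a simply connected base, and the implicit function theorem is applied with $\bfu$ as an additional holomorphic parameter; the radius $R$ can be taken locally uniform by the usual quantitative form of that theorem. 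What your approach buys over the paper's is a complete, self-contained argument; what the paper's citation buys is brevity and an assurance that the ODE-theoretic subtleties (existence, uniqueness, parameter dependence) are handled in a standard reference.
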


\begin{proof}
The desired coordinates are constructed in \cite{Strebel} as a solution to a suitable ordinary
differential equation.  The last statement follows immediately from holomorphic dependence on
parameters.
\end{proof}

We call the above coordinate $z$ a {\em standard coordinate} for $\omega$.

For families of differentials such that $\ord_0 \omega_\bfu$ is not constant, the situation is more
complicated.  Given a form $\omega = z^k dz$ in standard coordinates, for $k\geq 0$, a domain
$U\subset\CC^n$, and a holomorphic map $\alpha\colon\Delta_R \times U \to \CC$, written as
$\alpha_\bfu(z)$ and such that $\alpha_\bfzero$ is identically zero, consider the family of one-forms
\begin{equation}  \label{eq:alpha_def}
  \omega_\bfu \,\coloneqq\, \left(z^k + \frac{\alpha_\bfu(z)}{z}\right)dz,
\end{equation}
which we call an \emph{$\alpha$-deformation} of $\omega$.  In this deformation, the zero of order
$k$ breaks up into a simple pole at $0$ and $k+1$ nearby zeros (or $k$ zeros if
$\alpha_\bfu(0)\equiv 0$).

While the above standard coordinates can no longer be defined on a neighborhood of $0$, we now
establish that these coordinates persist on an annulus $\{R_1 < |z| < R_2\}$, where all zeros of
$\omega_\bfu$ are contained in $\{|z| < R_1\}$.

\begin{thm} \label{thm:deformed_standard_coordinates} Consider the
  annulus $A\coloneqq \{z:R_1 < |z| < R_2\}\subset\Delta_R$ for some
  fixed $R_1<R_2<R$. Let $\omega\coloneqq z^kdz$ be the holomorphic
  differential in the standard form on $\Delta_R$ for some $k\ge 0$,
  $\omega_{\bfu}$ an $\alpha$-deformation of $\omega$, and let
  $\theta_j\coloneqq e^{2\pi i j/(k+1)}$ be a $(k+1)$st root of
  unity. Choose a basepoint $p\in A$ and a holomorphic map
  $\sigma\colon U\to\Delta_R$ such that
  $\sigma({\bfzero}) = \theta_j p$.
\par
Then there exists a neighborhood $U_{\bfzero}\subset U$ of ${\bfzero}$
and a holomorphic map $\phi:A\times U_{\bfzero}\to\Delta_R$, such that
$\phi_{\bfu}^* (\omega_\bfu) = \eta_{\alpha_\bfu(0)}$ where
\begin{equation*}
  \eta_r \=  \left(z^k + \frac{r}{z}\, \right)dz,
\end{equation*}
with $\phi_{\bfzero}(z) = \theta_j z$ for all $z\in A$ and with $\phi_\bfu(p) = \sigma(\bfu)$ for all $\bfu\in U_{\bfzero}$.
\end{thm}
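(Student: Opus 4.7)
The plan is to recast the condition $\phi_\bfu^*\omega_\bfu = \eta_{\alpha_\bfu(0)}$ as a holomorphic first-order ODE and solve it, then extend the solution to all of $A$ by analytic continuation. After possibly shrinking $R_1$ and the parameter neighborhood $U$, I may assume that for every $\bfu\in U$ all zeros of $\omega_\bfu$ lie in $\{|w|<R_1\}$, so both $\omega_\bfu$ on a neighborhood of $\theta_j A\subset \Delta_R^\ast$ and $\eta_{\alpha_\bfu(0)}$ on $A$ are holomorphic and nowhere vanishing. Writing $\omega_\bfu = f_\bfu(w)\,dw$ and $\eta_r = g_r(z)\,dz$, the pullback condition reads
\begin{equation*}
    f_\bfu\bigl(\phi_\bfu(z)\bigr)\,\phi_\bfu'(z) \= g_{\alpha_\bfu(0)}(z)\,,
\end{equation*}
a non-singular holomorphic ODE with initial condition $\phi_\bfu(p)=\sigma(\bfu)$.

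First I would solve this ODE in a simply connected neighborhood of $p$ in $A$. Standard existence, uniqueness, and parametric dependence for holomorphic ODEs produce a unique local solution $\phi_\bfu$, holomorphic in $\bfu$ near $\bfzero$. At $\bfu=\bfzero$ the ODE becomes $\phi^{k}\phi'=z^{k}$, which integrates explicitly to $\phi(z)^{k+1}=z^{k+1}$; the initial condition $\phi_\bfzero(p)=\theta_j p$ singles out the branch $\phi_\bfzero(z)=\theta_j z$, which is in fact globally holomorphic on all of $A$ and takes values in $\Delta_R$. Next I would analytically continue $\phi_\bfu$ along arbitrary paths in $A$. By continuous dependence on $\bfu$, after shrinking $U_\bfzero$ the continuation stays uniformly close to $\theta_j\cdot\mathrm{id}$; in particular it stays inside $\Delta_R$ and avoids the zeros of $\omega_\bfu$, so the ODE never becomes singular, and continuation along every path in $A$ is defined.

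The crux of the proof is single-valuedness of the analytic continuation around the puncture, i.e., invariance under the generator of $\pi_1(A)$. Let $\gamma$ be a loop in $A$ winding once around $0$. Integrating the pullback identity gives
\begin{equation*}
    \int_{\phi_\bfu\circ\gamma}\omega_\bfu \=\int_\gamma \phi_\bfu^\ast\omega_\bfu \=\int_\gamma \eta_{\alpha_\bfu(0)}\=2\pi i\,\alpha_\bfu(0).
\end{equation*}
On the other hand $\phi_\bfu\circ\gamma$ is a small perturbation of $\theta_j\gamma$, hence winds exactly once around the puncture in the target, so the left-hand side equals $2\pi i\,\Res_0\omega_\bfu=2\pi i\,\alpha_\bfu(0)$, and the two monodromies agree. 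Equivalently, setting $F_\bfu(w)=\int_{\theta_j p}^{w}\omega_\bfu$ and $G_r(z)=\int_p^{z}\eta_r$ on the universal cover of the respective domains, both pick up the period $2\pi i\,\alpha_\bfu(0)$ under the deck transformation, so the identity $F_\bfu(\phi_\bfu(z))=G_{\alpha_\bfu(0)}(z)$ descends from the universal cover to $A$. The resulting $\phi_\bfu\colon A\to\Delta_R$ is single-valued, holomorphic, depends holomorphically on $\bfu$, satisfies $\phi_\bfu(p)=\sigma(\bfu)$, and reduces to $\theta_j z$ at $\bfu=\bfzero$.

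The main obstacle is this monodromy matching; it is forced by the built-in equality $\Res_0\omega_\bfu=\alpha_\bfu(0)$ inherent in the definition of an $\alpha$-deformation, and this is precisely why the deformed standard coordinates survive on the annulus $A$ even though they collapse at the origin when the order of vanishing jumps.
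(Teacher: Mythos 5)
Your proof is correct and takes a genuinely different route from the paper. The paper applies the Implicit Function Theorem in Banach spaces: it parametrizes candidates as $\phi_h(z)=\theta_j z\,e^{h(z)}$ with $h$ in a Banach space of bounded holomorphic functions on $A$, recasts the pullback equation together with the initial condition as a single equation $G(h,\bfu)=0$, and checks that the linearization at $(0,\bfzero)$, namely $h\mapsto\left((z^{k+1}h)',\,\theta_j p\,h(p)\right)$, is a Banach space isomorphism. You instead view the pullback condition as a non-singular first-order holomorphic ODE and earn single-valuedness on the annulus by matching monodromies. Both proofs invoke the identity $\Res_0\omega_\bfu=\alpha_\bfu(0)$ at the analogous moment: for the paper it is what makes the error $F(h,\bfu)$ land in the subspace $H$ of functions with vanishing $z^{-1}$ Laurent coefficient, verified by the same period computation $\int_\gamma\phi_h^*\omega_\bfu=2\pi i\,\alpha_\bfu(0)$ that you carry out; for you it is what kills the monodromy of the local ODE solution. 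The structural difference is that the ansatz $\phi_h(z)=\theta_j z\,e^{h(z)}$ builds single-valuedness into the unknown from the outset, so the whole argument lives globally on $A$, and holomorphic dependence on $\bfu$, uniform control of the continuation, and the range condition $\phi_\bfu(A)\subset\Delta_R$ are all absorbed into one IFT hypothesis check; your route is conceptually more transparent about why the $\alpha$-deformation structure is exactly what makes the construction go through, but demands more care in the continuation. In particular, the sentence asserting $\int_{\phi_\bfu\circ\gamma}\omega_\bfu = 2\pi i\,\Res_0\omega_\bfu$ as stated presupposes the conclusion (that $\phi_\bfu\circ\gamma$ is already a closed loop); your subsequent universal-cover phrasing correctly repairs this, provided one also uses local injectivity of the primitive $F_\bfu$ and the closeness of $\phi_\bfu$ to $\theta_j\cdot\mathrm{id}$ to upgrade equality of $F_\bfu$-values at the two endpoints to equality of the endpoints themselves.
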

We remark that using $\theta_j=1$ would be enough for our purpose here, but the freedom of choosing
a different $(k+1)$st root $\theta_j$ will be useful in our forthcoming paper
\cite{BCGGM2}. Moreover, the theorem allows $\alpha_\bfu(0)=0$ for all $\bfu$, namely, it allows us
to work with deformations that do not have residues. This will be important for the procedure of
merging nearby zeroes to a higher order zero, which we will develop later.
\begin{proof}
Given a holomorphic function $h = h_{\bfu}(z)$ on $A$ depending holomorphically on the parameter
$\bfu \in U$, let $\phi_h$ denote the holomorphic function
$$
  \phi_h(z) \coloneqq \theta_j \cdot z\cdot e^{h(z)}.
$$
We interpret the condition $\phi_h^*(\omega_\bfu) = \eta_{\alpha_\bfu(0)}$  as a differential equation involving $h$ and $\alpha$.  The Implicit Function Theorem for Banach  spaces (see \cite[Theorem~XIV.2.1]{LangRealAndFunctional} or \cite{whittlesey}) will then allow us to perturb the solution $\phi_{\bfzero}(z) = \theta_j z$
(with $h_{\bfzero}(z) \equiv 0$) to obtain a holomorphically varying family of solutions.
\par
Let $\banach{A}{m}$ denote the Banach space of holomorphic functions
on $A$ whose first $m$ derivatives are uniformly bounded, equipped
with the norm
  $$\|f\|_m \,\coloneqq\, \sum_{j=0}^m \sup_{z\in A} |f^{(j)}(z)|.$$
We calculate $\phi_h^*(\omega_\bfu) - \eta_{\alpha_\bfu(0)} = F(h, \bfu)dz,$ where
 \begin{equation*}
    F(h, \bfu) \= \left( h' + \frac{1}{z} \right) \left( z^{k+1} e^{(k+1)h} + \alpha_\bfu(\theta_j z e^h)\right) - z^k - \frac{\alpha_\bfu(0)}{z}.
 \end{equation*}
 To make sure that $F(h,\bfu)$ is defined, we choose a neighborhood of
 the identically zero function $W \subset \banach{A}{1}$ such that for
 any $h$ in $W$, the function $\phi_h$ is univalent on $A$, takes
 values in $\Delta_R$, and $0$ lies in the bounded component of the
 complement of $\phi_h(A)$.  Note that $\phi_h$ is univalent on $A$ as
 long as $\psi_h(z) = \phi_h(z) -z$ satisfies $|\psi_h'(z)|< 1/2$ on
 $A$, since $\phi_h(z_1) = \phi_h(z_2)$ implies
 $|z_1 - z_2| = |\psi_h(z_1) - \psi_h(z_2)| \leq \frac{1}{2}|z_1 -
 z_2|$, so $z_1 = z_2$.

 We then regard $F$ as a holomorphic map
 $F\colon W \times U \to H$, where $H\subset\calO(A)_0$ is the
 subspace of functions with zero coefficient of $z^{-1}$ in the
 Laurent expansion.  Note that $H$ is a closed subspace, and thus
 itself a Banach space, as the zero coefficient condition is
 equivalent to the residue being zero, and computing the integral over
 a path is a continuous linear map.  That the image of $F$ lies in $H$ follows immediately from the definition of $F$, since for any curve $\gamma$ generating the fundamental group of $A$, the image $\phi_h(\gamma)$ winds once around zero, so
 $$\int_\gamma \phi_h^*\omega_\bfu = \int_{\phi_h(\gamma)}\omega_\bfu = 2\pi i \alpha_\bfu(0) = \int_\gamma \eta_{\alpha_\bfu(0)}.$$
 It follows theat $F$ has no $z^{-1}$ term.

To handle the initial condition, we define $G\colon W\times U \to H\times \CC$ by
$$G(h,\bfu)  \,\coloneqq\, (F(h, \bfu), \, \theta_j p\, e^{h(p)} - \sigma(\bfu)).$$
We then have $G(0, {\bfzero}) = (0,0)$.  We wish to apply the Implicit Function Theorem to produce a neighborhood $U_{\bfzero}\subset U$ of ${\bfzero}$ and a holomorphic function $\Phi\colon U_{\bfzero}\to W$ such that $G(\Phi(\bfu), \bfu) \equiv 0$.  The desired family of holomorphic maps is then given by
$\phi_{\Phi(\bfu)}$. To apply the Implicit Function Theorem, it suffices to show that $D_h G_{(0, {\bfzero})}\colon
\banach{A}{1}\to H\times\CC$ (the derivative of $G$ at $(0,{\bfzero})$ with respect to $h$) is a linear Banach space homeomorphism.  We first compute
$D_h F_{(0,{\bfzero})}\colon \banach{A}{1}\to H$ as follows:
  \begin{align*}
    D_h F_{(0,{\bfzero})}(h) &= \frac{d}{dt}F(th, {\bfzero})|_{t=0} \\
                     &= \frac{d}{dt} \left[ \left(th' + \frac{1}{z}\right) z^{k+1} e^{(k+1)th}
                    \right]_{t=0}\\
                     &= z^{k+1} h' + (k+1)z^k h \\
                     &= (z^{k+1} h)'.
  \end{align*}
  The derivative $D_h F_{(0,{\bfzero})}$ is thus a bounded linear
  operator with kernel spanned by $z^{-k-1}$. It also follows that
  \begin{equation*}
    D_hG_{(0,{\bfzero})}(h) \= ((z^{k+1} h)', \, \theta_j p\, h(p)).
  \end{equation*}
  Define an operator $T\colon H\times \CC\to \banach{A}{1}$ by
  $$T(h, c)  \,\coloneqq\, z^{-k-1} \tilde{h},$$
  where $\tilde{h}$ is
  the antiderivative of $h$ such that the initial condition $\tilde{h}(p) = \theta_j^{-1} p^k c$
  holds. One checks that $T$ and $D_hG_{(0,{\bfzero})}$
  are inverse operators, and hence  $D_hG_{(0,{\bfzero})}$ is a linear Banach space homeomorphism as desired.
\end{proof}
\par
\subsection{Plumbing construction}
\par
Plumbing gives a way to deform a nodal Riemann surface to a smooth one, by plumbing the two components of the punctured neighborhood of a node (see e.g. \cite{wolpertplumb} for a modern treatment). Starting with a twisted differential, we would like to construct suitable meromorphic differentials on the plumbed Riemann surface.
\par
Let us first outline the key steps involved in the plumbing construction. We want to open up a node with a holomorphic differential on the higher level component and a meromorphic differential with a pole at the node on the
lower level component.  When this pole has no residue, from the flat geometric point of view the
plumbing is a local cut-and-paste construction (see~\cite{gendron} and~\cite{chen}).
\par
When the polar node on the lower level component has a nonzero residue,
it is not possible to plumb by a purely local construction, because such nonzero residues correspond to periods of curves far from the nodes under the flat metric.
To deal with this issue, we first add to the differential on the higher level component a small meromorphic differential with simple poles at
the nodes such that their residues are opposite to that of the poles on the lower level component.  This allows us to
plumb the nodes by gluing along annuli via the conformal maps constructed in
Theorem~\ref{thm:deformed_standard_coordinates}. This is done in Theorem~\ref{thm:plumbgeneral} below.
\par
Now, when adding this meromorphic differential, a zero of the twisted differential on the higher level component
may break into a number of nearby zeroes of lower order with the same total multiplicity.  In
Lemma~\ref{lm:merge} below, we describe a local operation merging these zeros back together by cutting out
a neighborhood containing all these zeros and gluing in along an annulus the form $z^kdz$ supported on a small
disk, again using the conformal maps constructed in Theorem~\ref{thm:deformed_standard_coordinates}.  This ensures
that the resulting flat surfaces are contained in the prescribed stratum.
\par
We now set up the plumbing construction in more detail.  Suppose $\calX$ is a family of nodal
Riemann surfaces over the disc~$\Delta$ with a persistent node along the section $n(t)$ together
with a relative meromorphic one-form $\mathcal{W}$, that is, a holomorphic family of meromorphic
one-forms modulo differentials pulled back from~$\Delta$. We denote by $\omega_t$ the
restriction of $\mathcal{W}$ to the fiber $X_t$.  We want to modify $(\calX, \mathcal{W})$ to obtain a
family of smooth Riemann surfaces equipped with one-forms degenerating to the singular fiber  $(X_0,
\omega_0)$.  The construction is {\em local} near the section of
nodes~$n(t)$ and we will often tacitly shrink the neighborhood of~$n(t)$ that we work in.
\par
To smooth such a family, we use the standard \emph{plumbing fixture}.
Let $a$ be a positive integer. Consider the family $\pi_a\colon \VV_a \to \Delta_1$
of cylinders degenerating to a node
\begin{equation*}
  \VV_a \= \{(u,v,t)\in \Delta_1^3 \colon uv=t^a\}
\end{equation*}
where the projection is given by $\pi_a(u,v,t) = t$.  Let $\Omega$ be a relative meromorphic differential
on $\VV_a$, which we think of as a model for the plumbed family defined later. Denote by $\Omega_t$ the restriction of $\Omega$ to the fiber over $t$.
\par
We first introduce some notation for the precise {\em plumbing setup}:
\begin{itemize}
\item[$\bullet$] Let $\pi^+\colon\calX^+\to\Delta_\epsilon$ and
  $\pi^-\colon\calX^-\to\Delta_\epsilon$ be two holomorphic families
  of (possibly nodal) Riemann surfaces over
  a disk, equipped with two relative meromorphic one-forms $\mathcal{W}^+$ and $\mathcal{W}^-$. We remark that the two families are allowed to be the same for classical plumbing at a non-separating simple polar node (see Proposition~\ref{prop:classical_plumbing} below). We denote by $\omega^+_t$ and $\omega^-_t$ the restriction of these forms to the fibers over $t$.

  \item[$\bullet$]
  Let $n^\pm:\Delta_\epsilon\to\calX^\pm$ be two holomorphic sections of these families away from the nodes of $\calX^{\pm}$, and let
  $\pi:\calX\to\Delta_\epsilon$ be the family of nodal Riemann surfaces obtained by identifying
  $\calX^+$ and $\calX^-$ along $n^+$ and $n^-$ to form the section of nodes $n$ (and possibly along other sections $n_i^\pm$ to form additional nodes that we will ignore as the plumbing construction is local at $n$). For $t\in\Delta_\epsilon$,
  we denote by
    $$
      X_t\=X_t^+\cup X_t^-\=X_t^+\sqcup X_t^-/(n^+(t)\sim n^-(t))
    $$
    the fiber of $\calX$ over $t$.

\item[$\bullet$] For some $0<\delta\ll 1$, let $U_\delta = \Delta_\delta^2 \times \{0\}\cap \VV_a$ be a
  neighborhood of the node in the central fiber of $\VV_a$, and let $D\subset X_0$ be a neighborhood of the node in the central fiber of $\calX$.

\item[$\bullet$]  We write $\VV_{a, \delta, \epsilon}$ for the restricted family
  $\VV_{a} \cap (\Delta_\delta^2\times \Delta_\epsilon)\to\Delta_\epsilon$. For some fixed
  $0<\delta'<\delta$ and $0<\epsilon < (\delta')^{2/a}$, we let
  $\calA\coloneqq\VV_{a,\delta,\epsilon}\setminus \overline{\VV_{a,\delta',\epsilon}}$ (where the bar means simply the closure). The bound on $\epsilon$ implies that $\calA$  is the disjoint union of two families of annuli $\calA=\calA^+\cup \calA^-$.

\item[$\bullet$] Let $\calE^\pm\subsetneq\calD^\pm\subset\calX^\pm$ be two families of conformal disks centered around $n^\pm$, so that $\calB^\pm \coloneqq \mathcal{D}^\pm \setminus
  \overline{\mathcal{E}}^\pm$ are two families of annuli, disjoint from the nodes. We require that the intersection of $\mathcal{D}^\pm$ with $X_0$
  is contained in $D$.
\end{itemize}

We will often omit superscripts when we take the union of two objects in the above setup, e.g.\ $\mathcal{E} = \mathcal{E}^+\cup \mathcal{E}^-$.
\par
Given the plumbing setup, we say that the family $(\mathcal{X}, \mathcal{W})$ is \emph{plumbable with fixture $(\VV_a, \Omega)$}, if the following conditions hold:
\begin{itemize}
\item[$\bullet$] There exists a conformal isomorphism $\phi\colon U_\delta\to D$ such that $\phi^*\omega_0 = \Omega_0$.

\item [$\bullet$] There exist two families of conformal isomorphisms $\Phi^\pm\colon \mathcal{A}^\pm \to \mathcal{B}^\pm$ such that
$(\Phi^\pm)^*\mathcal{W} = \Omega$ and such that the restriction of $\Phi^\pm$ to the central fiber agrees with $\phi$.
\end{itemize}

If $(\mathcal{X}, \mathcal{W})$ is plumbable with fixture $(\VV_a, \Omega)$, let $(\mathcal{X}', \mathcal{W}')$  be
the family obtained by identifying $\calX\setminus\overline{\mathcal{E}}$ and
$\VV_{a,\delta,\epsilon}$ along $\mathcal{A}^\pm$ and $\mathcal{B}^\pm$ via the maps $\Phi^\pm$. Namely,
$$
\calX'\,\coloneqq\,\left(\left(\calX\setminus\overline\calE\right)\sqcup \VV_{a,\delta,\epsilon}\right)\,/\,
(\calA\,\mathop{\sim}\limits^\Phi\, \mathcal{B}),
$$
and $\mathcal{W}'$ is the relative meromorphic differential induced by $\mathcal{W}$ and
$\Omega$. In this case we say that $(\mathcal{X}', \mathcal{W}')$ is the \emph{plumbed family with fixture $(\VV_a, \Omega)$}. Note that
$(\mathcal{X}', \mathcal{W}')$ is smooth except for the central fiber, which is isomorphic to the
nodal surface $(X_0, \omega_0)$.
\par
Our plumbing construction differs from other constructions in that our gluing maps are defined on a
family of annuli of fixed moduli (with inner radius $\delta'$ and outer radius~$\delta$), instead of a family of growing annuli whose moduli tend to
infinity. This is necessary, because Theorem~\ref{thm:deformed_standard_coordinates}, which will be
used in several plumbing contexts, only constructs conformal maps on a fixed annulus.
\par
The case of two differentials with simple poles and opposite
residues is known in the literature as classical plumbing. We recall this construction here using the above notation.
\par
\begin{prop}[{\bf Classical plumbing}] \label{prop:classical_plumbing}
Suppose in the plumbing setup each of the two differentials $\omega^\pm_t$ has a simple pole at $n^\pm(t)$, with residue
$r^\pm(t)$.
If $r^+(t)+r^-(t) = 0$ for every $t$, then the family is plumbable with fixture $(\VV_1, \Omega)$, where
\begin{equation*}
   \Omega_t \, \coloneqq\, r^+(t)\, \frac{du}{u} = r^-(t)\, \frac{dv}{v}.
\end{equation*}
\end{prop}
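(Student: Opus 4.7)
My plan is to reduce everything to the local normal form given by Proposition~\ref{prop:standard_coordinates} and then define the gluing maps tautologically. Since $\ord_{n^\pm(t)}\omega^\pm_t \equiv -1$, after a preliminary change of local coordinates translating $n^\pm(t)$ to the origin, the family version of that proposition produces, on some uniform disk $\Delta_R$ with $R>\delta$ and for $t$ in some possibly smaller base disk $\Delta_\epsilon$, holomorphic families of conformal embeddings $\phi^\pm_t\colon(\Delta_R,0)\to(\calX^\pm_t,n^\pm(t))$ with $(\phi^\pm_t)^*\omega^\pm_t = (r^\pm(t)/z)\,dz$.

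With these standard coordinates in hand, I would choose the disks $\calD^\pm$ and $\calE^\pm$ in the plumbing setup to be the $\phi^\pm_t$-images of $\Delta_\delta$ and $\Delta_{\delta'}$ respectively, so that $\calB^\pm_t = \phi^\pm_t(\{\delta'<|z|<\delta\})$. The central-fiber map $\phi\colon U_\delta\to D$ is then defined branchwise by $\phi(u,0)\coloneqq\phi^+_0(u)$ and $\phi(0,v)\coloneqq\phi^-_0(v)$. The key compatibility to check is that the prescribed model $\Omega_t = r^+(t)\,du/u = r^-(t)\,dv/v$ is a well-defined relative differential on $\VV_1$: the relation $uv=t$ gives $v\,du + u\,dv = 0$ on fibers, so $du/u = -dv/v$ away from the central fiber, which combined with $r^+(t)+r^-(t)=0$ shows the two expressions agree. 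On each branch of $U_\delta$ one then immediately computes $\phi^*\omega_0 = \Omega_0$.

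For the gluing maps, on the connected component $\calA^+\subset\calA$ where $|u|>\delta'$ (so $|v|<\epsilon/\delta'<\delta'$), I would define $\Phi^+(u,v,t)\coloneqq\phi^+_t(u)$, and symmetrically $\Phi^-(u,v,t)\coloneqq\phi^-_t(v)$ on the other component. Since on $\calA^+$ the coordinate $u$ parametrizes the annulus $\{\delta'<|u|<\delta\}$ bijectively in each fiber, $\Phi^+$ is a holomorphic family of conformal isomorphisms onto $\calB^+$. The pullback calculation $(\Phi^+)^*\mathcal{W}^+ = (\phi^+_t)^*\omega^+_t = r^+(t)\,du/u = \Omega|_{\calA^+}$ is immediate, and $\Phi^+|_{t=0}$ agrees with $\phi$ by construction. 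This verifies both conditions of the plumbable setup.

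I do not expect any real obstacle here, precisely because for a simple pole the standard form $(r/z)\,dz$ of Proposition~\ref{prop:standard_coordinates} already absorbs the residue, so the model $\Omega$ extends globally over the full cylinder $\VV_1$ without requiring any modification of $\mathcal{W}^\pm$. This is what distinguishes the classical case from the general plumbing of Theorem~\ref{thm:plumbgeneral}, where a higher-order pole on one branch carries a genuine residue obstruction that forces one first to add a small meromorphic correction to the higher-level differential. The only piece of routine care needed here is uniformity of the radius $R$ in the parameter $t$, which is exactly the content of the family statement in Proposition~\ref{prop:standard_coordinates}.
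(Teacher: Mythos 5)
Your proposal is correct and follows essentially the same route as the paper: both proofs invoke the family version of Proposition~\ref{prop:standard_coordinates} to put $\omega^\pm_t$ into the normal form $r^\pm(t)\,dz/z$ near the node, and then define the gluing maps $\Phi^\pm$ on the two components of $\calA$ by the respective standard coordinate charts. Your additional remark verifying that the two expressions for $\Omega_t$ agree on fibers of $\VV_1$ via $v\,du+u\,dv=0$ and the opposite-residue condition, and your closing comparison with the higher-order plumbing of Theorem~\ref{thm:plumbgeneral}, are correct observations the paper leaves implicit.
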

\par
\begin{proof}
  By working locally in the neighborhood of the pole, we transfer the problem
to the setup of Proposition~\ref{prop:standard_coordinates}. We choose two
holomorphically varying coordinates given by
  Proposition~\ref{prop:standard_coordinates} $\psi^\pm\colon \Delta_\delta \to \mathcal{X}^\pm$
  such that $\psi^\pm(0) = n^\pm(0)$ and such that the pullback is in the standard form:
  \begin{equation*}
    (\psi_t^+)^{*} \omega^+_t = r^+(t) \frac{dw}{w} \quad\text{and}\quad (\psi_t^-)^{*} \omega^-_t = r^-(t)\frac{dw}{w}.
  \end{equation*}

We define the gluing map $\phi$ on the central fiber of $\VV_{1, \delta, \epsilon}$ by
 $\phi( u,0,0) = \psi_0^+(u)$ and $\phi(0,v,0) = \psi^-(v)$. Choose any $\delta'$ with
  $0<\delta'<\delta$ and define
  $\mathcal{A} = \VV_{1,\delta,\epsilon} \setminus \overline\VV_{1,\delta',\epsilon}$ as in the plumbing
  setup.  On the upper component $\mathcal{A}^+$ of $\mathcal{A}$, define the gluing map
  $\Phi^+(u,v,t) = \psi_t^+(u)$, and on the lower component $\mathcal{A}^-$, define the gluing map
  $\Phi^-(u,v,t) = \psi_t^-(v)$. Then these gluing maps produce the desired plumbing family.
\end{proof}
\par
We now turn to the crucial case of plumbing a zero of order $k\geq 0$ on $\calX^+$ to a pole of
order $k+2$ on $\calX^-$.   As the pole may have a nonzero residue, we need
a family of meromorphic differentials $\xi = \{\xi_t\}$ on $\calX^+$, such that $\xi_t$ has a simple
pole at the node of $X^+_t$ with residue opposite to that of $\omega^-_t$.
Scale $\xi_t$ by a suitable power of $t$, add it to $\omega^+_t$, and scale $\omega^{-}_t$ by the same power of $t$.
We will then plumb the modified families of differentials on $\calX^+$ and $\calX^-$
by using the standard coordinates for the deformation, constructed in Theorem~\ref{thm:deformed_standard_coordinates}.
We call $\xi$ the \emph{modification differential} for this plumbing procedure.
\par
\begin{thm}[{\bf Higher order plumbing}]\label{thm:plumbgeneral}
In the plumbing setup, suppose that for any $t$ the form $\omega^+_t$
has a zero of order $k\geq 0$ at $n^+(t)$, while $\omega^-_t$ has a pole
of order $k+2$ at~$n^-(t)$ with residue $r(t)$.  Let $\{\xi_t\}$ be a family of meromorphic
differentials on~$\calX^+$ which has simple poles at $n^+(t)$ with
residues $-r(t)$ and is otherwise holomorphic in a neighborhood of $n^+(t)$. Consider the family of differentials $(\mathcal{X},
\mathcal{N})$ given by
\be \label{eq:plumbdiff}
\eta^+_t = t^c (\omega^+_t + t^{b}\xi_t) \,\,\text{on}\,\, \calX^+ \quad \text{and} \quad
\eta^-_t = t^{b+c}\omega^-_t \,\,\text{on}\,\, \calX^-,
\ee
where $c$ is a nonnegative integer and $b\coloneqq a(k+1)$ for some positive integer $a$, and let~$\Omega$ be the family
\begin{equation*}
   \Omega_t\,\coloneqq\,t^c\left(u^k- t^b \,\frac{r(t)}{u}\right)du \= t^{b+c}\left(-v^{-k-2}+\frac{r(t)}{v}\right)dv
\end{equation*}
of meromorphic differentials on $\VV_a$.
\par
Then the family $(\mathcal{X}, \mathcal{N})$ is plumbable with fixture
$(\VV_a, \Omega)$. Moreover, the scaling limit in the sense of
Lemma~\ref{le:scalinglimit} of the plumbed family of
differentials $(\mathcal{X}', \mathcal{N}')$
on $X^{\pm}_0$ is equal to $\omega^{\pm}_0$.
\end{thm}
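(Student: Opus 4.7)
The plan is to produce, on each of $\calX^{\pm}$, a family of conformal coordinates around $n^{\pm}(t)$ in which the plumbing differentials $\eta^{\pm}_t$ acquire exactly the standard form of $\Omega_t$, and then to identify these coordinate neighborhoods with the two annular ends of the fixture $\VV_a$ via the relation $uv = t^a$. The key input on the $-$ side is Proposition~\ref{prop:standard_coordinates} for families with constant pole order; on the $+$ side, the key input is Theorem~\ref{thm:deformed_standard_coordinates}, whose hypothesis we will verify by first trivializing $\omega_t^+$ alone and then viewing the added residue-correction $t^b\xi_t$ as an $\alpha$-deformation. The scaling limit statement will then follow tautologically from the factors $t^c$ and $t^{b+c}$ in~\eqref{eq:plumbdiff}.

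For the $+$ side, the family $\omega_t^+$ has constant vanishing order $k$ at $n^+(t)$, so the family version of Proposition~\ref{prop:standard_coordinates} produces a holomorphically varying coordinate $u=u(\cdot,t)$ on a neighborhood of $n^+(t)$ in which $\omega_t^+ = u^k\,du$. In this coordinate, $\xi_t$ is meromorphic with a simple pole of residue $-r(t)$, so it has an expansion $\xi_t = \bigl(\tilde h(u,t) - r(t)/u\bigr)du$ with $\tilde h$ holomorphic. Consequently
\[
\omega_t^+ + t^{b}\xi_t \= \Bigl(u^k + \tfrac{\alpha_t(u)}{u}\Bigr)du,\qquad \alpha_t(u)\,\coloneqq\,t^{b}u\,\tilde h(u,t) - t^{b}r(t),
\]
and since $b\ge 1$ we have $\alpha_0\equiv 0$, so this is an $\alpha$-deformation of $u^k du$ in the sense preceding Theorem~\ref{thm:deformed_standard_coordinates}. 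Applying that theorem (with a chosen basepoint on a fixed annulus $A^+ = \{\delta' < |u| < \delta\}$, and with $\theta_j$ to be fixed below) yields a holomorphic family of conformal embeddings $\phi_t^+\colon A^+\to \calX^+$ with $(\phi_t^+)^*(\omega_t^++t^{b}\xi_t) = (u^k - t^{b}r(t)/u)\,du$, hence $(\phi_t^+)^*\eta_t^+ = t^c(u^k - t^{b}r(t)/u)du = \Omega_t|_{\calA^+}$, exactly matching the fixture on the upper annulus after the verification already done before the theorem statement.

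For the $-$ side, $\omega_t^-$ has constant pole order $k+2\ge 2$ and residue $r(t)$, so Proposition~\ref{prop:standard_coordinates} gives a holomorphic family of coordinates $v$ in which $\omega_t^- = \bigl(v^{-k-2} + r(t)/v\bigr)dv$; the sign discrepancy with $\Omega_t=t^{b+c}(-v^{-k-2}+r(t)/v)dv$ is absorbed by precomposing with $v\mapsto \zeta v$ for a root of unity $\zeta$ satisfying $\zeta^{k+1}=-1$ (which leaves the simple-pole term invariant while flipping the leading term), and this freedom is precisely the one left by the uniqueness clause of the proposition. After this adjustment, $(\phi^-_t)^*\eta_t^-=\Omega_t|_{\calA^-}$ for the corresponding family of embeddings $\phi_t^-$ on the lower annulus. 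Combined with the choice of $\theta_j$ and the basepoint data in Theorem~\ref{thm:deformed_standard_coordinates} on the $+$ side, the restriction of $\phi_0^{\pm}$ to the central fiber sends the two branches $\{v=0\}$ and $\{u=0\}$ of $U_\delta\subset \VV_a$ isomorphically onto the two branches of $D\subset X_0$, producing the conformal $\phi\colon U_\delta\to D$ with $\phi^*\omega_0 = \Omega_0$ that is required by the plumbing setup. The gluing data $(\Phi^+,\Phi^-)=(\phi_t^+,\phi_t^-)$ then exhibit $(\calX,\mathcal N)$ as plumbable with fixture $(\VV_a,\Omega)$. Finally, the scaling limit statement is immediate: on the plumbed $X'_t$ one has $t^{-c}\eta_t^+ = \omega_t^+ + t^b\xi_t \to \omega_0^+$ on $X^+_0$ and $t^{-(b+c)}\eta_t^- = \omega_t^-\to \omega_0^-$ on $X^-_0$ by holomorphic dependence. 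The main subtlety, and what I expect to be the only real work, is the bookkeeping needed to align the two families of standard coordinates on the central fiber so that the gluings $\Phi^+$ and $\Phi^-$ extend consistently across $t=0$ to the same $\phi$; this is where the freedom in the choice of $\theta_j$ and of the basepoint section $\sigma$ in Theorem~\ref{thm:deformed_standard_coordinates} is used in an essential way.
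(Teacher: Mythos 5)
Your argument is correct and follows essentially the same route as the paper's: normalize via Proposition~\ref{prop:standard_coordinates} on each side, recognize $\omega_t^+ + t^b\xi_t$ as an $\alpha$-deformation in the standard coordinate, invoke Theorem~\ref{thm:deformed_standard_coordinates} for the annular map on the $+$ side, and take the $-$ side coordinate directly (the paper applies Proposition~\ref{prop:standard_coordinates} to $-\omega_t^-$ rather than precomposing with $\zeta$, $\zeta^{k+1}=-1$, but these are equivalent). Your closing worry about aligning $\Phi^{\pm}$ across $t=0$ is not really a subtlety: with $\theta_j=1$ the annulus map from Theorem~\ref{thm:deformed_standard_coordinates} restricts to the identity at $t=0$, so $\Phi^{\pm}|_{t=0}$ are just $\psi_0^{\pm}$, which the paper uses directly to define the central-fiber gluing $\phi$; the extra $\theta_j$ freedom is recorded only for future use in \cite{BCGGM2}.
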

We remark that it suffices to prove the case $c = 0$, and then the general case follows from multiplying by $t^c$ at relevant places. The reason we include the exponent $c$ is for later use when we apply
the theorem in Section~\ref{subsec:proof_sufficiency}, where $c$ and $b+c$ will be related to the scaling parameters of the two branches at the plumbed node.
\begin{proof}
  As said above, we only need to consider the case $c=0$. We want to construct the gluing maps $\phi$ and $\Phi$ in the definition of plumbing with prescribed fixture. By Proposition~\ref{prop:standard_coordinates}  (and multiplying the
  forms by $-1$ in the second case), we can choose two coordinates $\psi_t^\pm\colon \Delta_\delta \to X_t^\pm$, varying holomorphically with $t$ in $\Delta_\epsilon$, such that $\psi_t^\pm(0) = n^\pm(t)$,
 \begin{equation*}
    (\psi_t^+)^*\omega_t^+ = w^k dw, \quad\text{and}\quad (\psi_t^-)^* \omega_t^-=\left(-w^{-k-2}+\frac{r(t)}{w}\right)dw.
\end{equation*}
We define the gluing map $\phi$ on the central
fiber of $\VV_{a,\delta,\epsilon}$ by $\phi(u,0,0) = \psi_0^+(u)$ and $\phi(0,v,0) = \psi_0^-(v)$.
In these coordinates on the upper component, we have
\begin{equation*}
    (\psi_t^+)^*(\omega^+_t  + t^b \xi_t) = \left(w^k + \frac{\alpha_t(w)}{w}\right)dw,
\end{equation*}
where $\alpha_t(w)$ is holomorphic in $w$ and $t$, and $\alpha_t(0)= -t^b r(t)$.  Let $A = \{ u :
\delta_1 < |u| <  \delta_2 \}$ for some $0 < \delta_1 < \delta_2 < \delta$. Theorem~\ref{thm:deformed_standard_coordinates} then gives (after possibly shrinking the base) a family of conformal maps
 $\gamma_t\colon A \to \Delta_\delta$ such that
  \begin{equation*}
       \gamma_t^*\left(w^k + \frac{\alpha_t(w)}{w}\right)dw = \left(u^k- t^b \,\frac{r(t)}{u}\right)\,du
  \end{equation*}
  The desired gluing map on the upper component $\mathcal{A}^+$ of
  $\mathcal{A} = \VV_{a,\delta_2, \epsilon} \setminus \VV_{a, \delta_1, \epsilon}$ is then defined
  by $\Phi^+ (u,v,t) = \psi_t^+\circ \gamma_t(u)$. The gluing map on the lower component $\mathcal{A}^-$ is simply $\Phi^-(u,v,t) = \psi_t^-(v)$. By construction
  $\Phi$ identifies $\Omega$ and $\mathcal{N}$ as desired.
\end{proof}
\par
To make this theorem useful in smoothing, we need a criterion describing when the modification
differential~$\xi$ can be constructed globally on a Riemann surface.
\par
\begin{lm} \label{le:resmodification} Let $f: \calX \to \Delta$ be a family of stable curves with
  sections $q_j$ for $j=1,\ldots,n$. Suppose that $r_j(t)$ for $j =1, \ldots, n$ is a collection of
  holomorphic functions on $\Delta$
  with $\sum_{j =1}^n r_j(t) = 0$. Then there exists a family of meromorphic
  differentials $\xi_t$ on $\calX$, varying holomorphically with $t$,
  with at worst simple poles at the $q_j$ and at the nodes, satisfying
  the opposite-residue condition at the nodes, holomorphic outside these sections and prescribed nodes, and such that
  $\Res_{q_j(t)} (\xi_t) \= r_j(t)$ for all $j$ and all $t \in \Delta$.
\end{lm}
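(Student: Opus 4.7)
The plan is to realize $\xi$ as a global section of a naturally defined locally free sheaf on $\Delta$, and then use the Stein property of $\Delta$ to lift the prescribed residue data. First I would introduce the sheaf $\calF \,\coloneqq\, \omega_{\calX/\Delta}\bigl(\sum_{j=1}^n q_j(\Delta)\bigr)$ on $\calX$. Its sections on a fiber $X_t$ are precisely the meromorphic differentials with at worst simple poles at the $q_j(t)$ and with at worst simple poles with opposite residues at each node of $X_t$ (the latter being the defining local property of the relative dualizing sheaf). On each fiber, Serre duality gives
$$h^1(X_t, \calF_t) \= h^0\bigl(X_t, \calO_{X_t}\bigl(-\textstyle\sum_j q_j(t)\bigr)\bigr) \= 0,$$
since $\sum_j q_j(t)$ is a nontrivial effective Cartier divisor on the connected reduced curve $X_t$, and Riemann--Roch then yields $h^0(X_t, \calF_t) = g+n-1$. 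By cohomology and base change (Grauert), $\calV \,\coloneqq\, f_*\calF$ is locally free of rank $g+n-1$ on $\Delta$.

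Next I would package the residue prescription as a morphism of vector bundles. Taking residues at the sections $q_j$ defines a homomorphism
$$\Res\colon \calV \longrightarrow \calO_\Delta^{\,n}, \qquad \xi \longmapsto \bigl(\Res_{q_1(t)}\xi, \ldots, \Res_{q_n(t)}\xi\bigr),$$
whose image lies in the hyperplane subbundle $H \subset \calO_\Delta^{\,n}$ cut out by $\sum_j a_j = 0$. Fiberwise this follows from the residue theorem applied on the normalization of $X_t$: the residues at the two branches of any node cancel by the opposite-residue condition, leaving only the residues at the $q_j(t)$. The kernel of $\Res$ is $f_*\omega_{\calX/\Delta}$, the Hodge bundle, of fiber rank $g$, so the image of $\Res$ has fiber rank $(g+n-1)-g = n-1 = \rank H$. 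Hence $\Res\colon \calV \twoheadrightarrow H$ is a surjection of locally free sheaves on $\Delta$.

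Finally I would pass to global sections. Since $\Delta$ is Stein and $\ker\Res$ is coherent, $H^1(\Delta, \ker\Res) = 0$, so the long exact sequence gives a surjection $H^0(\Delta, \calV) \twoheadrightarrow H^0(\Delta, H)$. The hypothesis $\sum_j r_j(t) \equiv 0$ says precisely that $(r_1(t), \ldots, r_n(t)) \in H^0(\Delta, H)$, so any lift to $H^0(\Delta, \calV) = H^0(\calX, \calF)$ furnishes the desired holomorphic family $\xi_t$. The main technical input is the constancy of fiber cohomology for $\calF$, which yields local freeness of $\calV$; once this is verified, everything else is formal sheaf cohomology over a Stein base.
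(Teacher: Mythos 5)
Your proof is correct and takes essentially the same route as the paper's: both realize $\xi$ as a global section of $f_*\omega_{\calX/\Delta}(\sum q_j)$ and lift the prescribed residue data along the residue map to $\calO_\Delta^n$. You make explicit via Serre duality, Grauert's theorem, and Cartan's Theorem~B what the paper argues more informally by quoting the Mittag--Leffler theorem for a single fiber and treating the set of sections with prescribed residues as an affine bundle over the disc.
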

\par
\begin{proof} For a single smooth Riemann surface $X$, the claim follows from the classical
Mittag-Leffler problem (see e.g.~\cite[Theorem~18.11]{forster}).
\par
To justify the holomorphic dependence we want to find a
section of $f_* \omega_{\calX/\Delta}(\sum q_j)$ that has a prescribed image under the fiberwise linear map $\Res$
to the trivial bundle $\CC^n \to \Delta$. Note that the set of sections with the prescribed image under $\Res$ is
non-empty by the preceding paragraph, and is an affine bundle modelled on a trivial
vector bundle over a disc. Such a bundle over a disc admits a holomorphic section as desired.
\par
The proof applies to the case of
stable curves as well, by treating $\omega_{\calX/\Delta}$ as the
relative dualizing sheaf. Alternatively, the stable version
can be deduced from the smooth version by degenerating to
the desired nodes via classical plumbing.
\end{proof}
\par
Now, when adding the modification differential $\xi_t$ to $\omega_t^+$ as in \eqref{eq:plumbdiff}, a zero of $\omega_t^+$
may in general break into many nearby zeros of lower order. In order to construct a family
of differentials that remains in the initial stratum, we describe one more plumbing construction
which merges these nearby zeros back into a single zero.
\par
Our setup here is similar to the plumbing setup described above, except we will only modify a
single family of Riemann surfaces. It is explicitly described as follows:
\par
\begin{itemize}
\item Let $\calX\to\Delta_\epsilon$ be a family of (possibly nodal) Riemann surfaces, and let $\famomega$ be a
  family of holomorphic one-forms on $\calX$ which vanishes to order $k>1$ at a smooth point $p$ in the
  central fiber. We denote the restriction of $\famomega$ to the fiber $X_t$ by $\omega_t$.

\item Let $\phi_0\colon \Delta_\delta\to X_0$ be a conformal map onto a neighborhood $D$ of $p$ in $X_0$ such
  that $\phi_0^*\omega_0 = w^k dw$ in the standard coordinates.

\item Let $\mathcal{E}\subset\mathcal{D}\subset\mathcal{X}$ be two families of conformal disks such
  that $\mathcal{D} \cap X_0 \subset D$, and let $\mathcal{B} = \mathcal{D} \setminus
  \overline{\mathcal{E}}$, a family of conformal annuli.   Let $\calA$ be the constant family
  of annuli $(\Delta_{\delta}\setminus\overline{\Delta}_{\delta'})
  \times\Delta_\epsilon\to\Delta_\epsilon$.
\end{itemize}

Given this setup, if there exists a family of conformal maps $\Phi\colon\mathcal{A}\to\mathcal{B}$ whose restriction to the central fiber agrees with $\phi_0$,
and such that $\Phi^*\famomega = w^k dw$ for all $t$, then we can construct a new family $(\mathcal{X}', \famomega')$ by gluing
$\mathcal{X}\setminus\overline{\mathcal{E}}$ to $\Delta_\delta\times \Delta_\epsilon$ along the
annuli via the map
$\Phi$.  The central fiber $(X_0', \omega_0')$ is isomorphic to $(X_0, \omega_0)$, and each $\omega'_t$ has a
zero of order $k$. In this case we say that the resulting family $(\mathcal{X}',\mathcal{W}')$ is a
\emph{merging} of $(\mathcal{X}, \famomega)$ at $p$.

\begin{lm}\label{lm:merge}
  Given a family $(\mathcal{X}, \famomega)$ of (possibly nodal) Riemann surfaces with a family of one-forms
  vanishing at a smooth point $p\in X_0$ to order $k>1$, there exists a merging of $(\mathcal{X}, \mathcal{W})$ at
  $p$.
\end{lm}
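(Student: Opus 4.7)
The plan is to apply Theorem~\ref{thm:deformed_standard_coordinates} in a local product chart around $p$, after arranging the deformation so that its residue term vanishes; the normal form on the resulting annular collar will then be $w^k\,dw$ exactly as required for the merging.

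Since $p$ is a smooth point of $X_0$ lying away from any nodes, the total space $\calX$ is smooth at $p$, and after shrinking $\epsilon$ we may choose a product coordinate chart $\Psi:\Delta_R\times\Delta_\epsilon\to\calX$ intertwining the second projection with the family projection and sending $(0,0)$ to $p$. Composing with the fiberwise coordinate change on $X_0$ supplied by Proposition~\ref{prop:standard_coordinates}, extended trivially in $t$, we may further assume $\Psi^*\mathcal{W}=f_t(w)\,dw$ with $f_t(w)$ holomorphic in $(w,t)$ and $f_0(w)=w^k$. The function $\alpha_t(w)\coloneqq w\bigl(f_t(w)-w^k\bigr)$ is then holomorphic in $(w,t)$, vanishes identically at $t=0$, and, crucially, satisfies $\alpha_t(0)=0$ for every $t$, so that
\[
  \Psi^*\mathcal{W}\=\Bigl(w^k+\frac{\alpha_t(w)}{w}\Bigr)dw
\]
is an $\alpha$-deformation of $w^k\,dw$ in the sense of~\eqref{eq:alpha_def}.

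Now pick radii $0<\delta'<\delta<R$, set $A\coloneqq\{\delta'<|w|<\delta\}$, choose a basepoint $w_0\in A$, and apply Theorem~\ref{thm:deformed_standard_coordinates} with the trivial root of unity $\theta_0=1$ and constant section $\sigma\equiv w_0$. After possibly shrinking the parameter disc, this yields a holomorphic family of conformal embeddings $\phi_t:A\to\Delta_R$ with $\phi_0=\mathrm{id}_A$ and
\[
  \phi_t^*\bigl(f_t(w)\,dw\bigr)\=\Bigl(w^k+\frac{\alpha_t(0)}{w}\Bigr)dw\=w^k\,dw,
\]
the simple-pole term being absent precisely because $\alpha_t(0)=0$.

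Setting $\Phi_t\coloneqq\Psi\circ\phi_t:A\to X_t$ then produces a family of conformal embeddings with $\Phi_t^*\omega_t=w^k\,dw$ and $\Phi_0=\phi_0|_A$. After a final shrinking, each $\Phi_t(A)$ is an embedded annulus in $X_t$ whose complement has a distinguished topological-disc component $\mathcal{E}_t$ limiting to $\phi_0(\Delta_{\delta'})$ as $t\to 0$; letting $\mathcal{B}_t\coloneqq\Phi_t(A)$ and letting $\mathcal{D}_t$ be the open disc in $X_t$ bounded by $\Phi_t(\{|w|=\delta\})$ furnishes the data of the merging setup, and $\Phi:\mathcal{A}\to\mathcal{B}$ is the required biholomorphism. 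One then assembles $(\calX',\famomega')$ by gluing $\calX\setminus\overline{\mathcal{E}}$ to $\Delta_\delta\times\Delta_\epsilon$ along $\Phi$ as described just before the lemma statement. The critical point throughout is the identity $\alpha_t(0)=0$, which is automatic in the present holomorphic setting because $f_t$ is holomorphic at $w=0$; without it, Theorem~\ref{thm:deformed_standard_coordinates} would leave behind a spurious simple pole at the center of the glued disc, and the merged differential would not lie in the prescribed stratum.
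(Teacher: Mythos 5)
Your proof is correct and follows essentially the same route as the paper's first proof of this lemma: both extend $\phi_0$ to a holomorphic family of charts, observe that the pullback is an $\alpha$-deformation with $\alpha_t(0)\equiv 0$ (so no residue is produced), and then apply Theorem~\ref{thm:deformed_standard_coordinates} on an annulus to get the gluing map. The only caveat is a notational slip near the end — once you set $\phi_0=\mathrm{id}_A$, writing ``$\Phi_0=\phi_0|_A$'' and ``$\phi_0(\Delta_{\delta'})$'' no longer refers to the chart $\Psi(\cdot,0)$ on $X_0$ as presumably intended — but this does not affect the substance of the argument.
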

\begin{proof}
We provide two proofs here. The first is an application of Theorem~\ref{thm:deformed_standard_coordinates}. Extend $\phi_0$
  to a holomorphic family of conformal maps $\phi_t\colon \Delta_\delta \to X_t$. Pulling back
  $\omega_t$ yields
  \begin{equation*}
    \phi_t^* \omega_t = (w^k + \beta_t(w)) dw,
  \end{equation*}
  where $\beta$ is holomorphic in $t$ and $w$ with $\beta_0 \equiv 0$.  This is an
  $\alpha$-deformation of $w^k dw$, where $\alpha_t (w) = w \beta_t(w)$, so $\alpha_t(0) = 0$ for all
  $t$.  Fix constants $0 < \delta_1 < \delta_2< \delta$ and let $A = \{w \colon \delta_1 < |w| < \delta_2\}$.
  Theorem~\ref{thm:deformed_standard_coordinates} then yields a family of conformal maps
  $\psi_t\colon A\to \Delta_\delta$ such that $\psi_t^* \phi_t^*\omega_t = w^k dw$.  Defining
  $\Phi(w,t) = \phi_t\circ \psi_t$ then provides the desired gluing map, which completes the construction.
\par
Alternatively, we can prove the lemma without using Theorem~\ref{thm:deformed_standard_coordinates},
because the differentials involved here have no residues. Let $\famomega$ and $\widetilde{\famomega}$
 be respectively the relative differentials $\bigl(u^k + a_2(t)u^{k-2} + \cdots +a_k(t)\bigr)\,du $  (see e.g.\ \cite[Proposition~3]{kozo1})
and  $u^{k}\,du$ on $\mathbb{A}^{1}\times \Delta$. Let $x_{0}(t) = (0, t)$ be the common center of masses of the zeros of
these differentials in $X_t$. Define the maps
$$ f(u,t)\=\int_{x_{0}(t)}^{x(t)}\famomega|_{\mathbb{A}^{1}_{t}} \quad \text{and} \quad \widetilde{f}(u,t)\=\int_{x_{0}(t)}^{x(t)} \widetilde{\famomega}\,|_{\mathbb{A}^{1}_{t}}\,. $$
Let $0<r<R$ such that $|f(u,t)|<r$ for all zeros $u$ of $\famomega$. By construction, for each $t$ the annuli
$$ \quad A \,\coloneqq\,\bigl\{r<\left|f(u,t)\right|<R \bigr\} \quad \text{and}  \quad \widetilde{A}\,\coloneqq\,\bigl\{r<|\widetilde{f}(u,t)|<R \bigr\} $$
are $(k+1)$-covers (as flat surfaces defined by the corresponding differentials) of the round annulus
$A_0(r,R)=\left\{z\in\CC:\,r<|z|<R\right\}$, equipped with $dz$. Consequently, there is a biholomorphism
$\Phi: \widetilde{A} \to A$ with $\Phi^* \famomega = \widetilde{\famomega}$, which provides the desired local modification.
\end{proof}

\subsection{Proof of the main theorem: conditions are sufficient}
\label{subsec:proof_sufficiency}
For future use in \cite{BCGGM2} we prove in this section a statement that is slightly
stronger than the sufficiency in Theorem~\ref{thm:main}, since we construct
a degenerating family that has not only a given \twd~$\eta$ as its scaling
limit, but moreover the scaling parameters define a prescribed level graph (rather than just some level
graph with which~$\eta$ is compatible).
\par
\begin{prop} \label{add}
Let $(X,\omega,z_1,\ldots,z_n)$ be a pointed stable differential
and let $\eta = \{\eta_v\}$ be a \twd\ on $X$ compatible
with a level graph $\overline\Gamma$ on $X$,
satisfying the conditions (i), (ii), and (iii) of Theorem~\ref{thm:main}.
\par
Then there exists a family $(f:\famcurv \to\Delta, \famomega, \calZ_1, \ldots, \calZ_n)$ of pointed stable
differentials over a disk~$\Delta$ with parameter~$t$, smooth outside $t=0$,
such that the set of differentials obtained from~$f$ as scaling limits as
in~\eqref{eq:scalinglimit} coincides with~$\eta$ and such that its scaling parameters define precisely
the level graph~$\overline{\Gamma}$.
\end{prop}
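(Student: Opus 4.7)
The plan is to realize the smoothing as a collection of local plumbings, one per node of $X$, performed in pairwise disjoint neighborhoods. First I would fix an integer-valued level function $\ell: V \to \ZZ_{\leq 0}$ inducing $\overline{\Gamma}$ with top level equal to $0$, and then rescale $\ell$ by a single positive integer factor so that $\ell(v_1) - \ell(v_2)$ becomes a positive multiple of $\ord_{q^+}\eta_{v_1} + 1$ at every node $q$ joining $v_1 \succ v_2$. This is the divisibility required to invoke Theorem~\ref{thm:plumbgeneral} with integer exponents $c = -\ell(v_1)$ and $b = \ell(v_1) - \ell(v_2) = a(k+1)$, where $k = \ord_{q^+}\eta_{v_1}$.

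The key preparation will be the construction of modification differentials dictated by the global residue condition. For each level $L < 0$ and each connected component $Y$ of $X_{>L}$ that contains no marked pole, condition~(4) in Definition~\ref{def:twistedAbType} yields $\sum_j \Res_{q_j^-}\eta_{v^-(q_j)} = 0$, where $q_1,\ldots,q_b$ are the nodes joining $Y$ to $X_{=L}$. Applying Lemma~\ref{le:resmodification} to the trivial family $Y \times \Delta \to \Delta$ will produce a family of meromorphic differentials $\xi_Y$ on $Y$ with simple poles of residues $-\Res_{q_j^-}\eta_{v^-(q_j)}$ at each $q_j^+$, opposite residues at the nodes internal to $Y$, and holomorphic elsewhere. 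For components $Y$ that do contain a marked pole, no global residue constraint is imposed and Lemma~\ref{le:resmodification} still supplies $\xi_Y$ after routing any residual residues through the prescribed polar sections.

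With these $\xi_Y$ in hand I would then perform the local plumbings. At a same-level node $q$ with $v_1 \asymp v_2$, conditions~(2) and~(3) imply that $\eta_{v_1}$ and $\eta_{v_2}$ have simple poles with opposite residues there, and Proposition~\ref{prop:classical_plumbing} provides the local smoothing after a common $t^{-\ell(v_1)}$ rescaling. At a strict-inequality node $q$ with $v_1 \succ v_2$, I would apply Theorem~\ref{thm:plumbgeneral} with $\omega^+ = \eta_{v_1}$, $\omega^- = \eta_{v_2}$, modification equal to $\xi_Y$ restricted to a neighborhood of $q^+$ (where $Y$ is the connected component of $X_{>\ell(v_2)}$ containing $v_1$), and the exponents $c, b$ chosen as above. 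The perturbation $t^b \xi_Y$ will generically split each zero of $\eta_{v_1}$ on $Y$ (the one at $q^+$ when $k \geq 1$, and any other zero whose neighborhood meets the support of $\xi_Y$) into a small cluster of lower-order zeros for $t \neq 0$, so I would apply Lemma~\ref{lm:merge} on a slightly smaller annulus around each such zero to re-merge the cluster into a single zero of the original order, keeping the family inside the prescribed stratum.

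The resulting family $(\famcurv',\famomega')$ will be smooth away from $t=0$; on the central fiber the top-level components will carry $\eta_v = \omega|_{X_v}$ while lower-level components will carry $0$, so the central fiber is $(X,\omega,z_1,\ldots,z_n)$ with the $z_i$ realized as the marked zeros and poles on the top-level components. By construction $t^{-\ell(v)} \famomega'|_{X_v}$ will tend to $\eta_v$ on each $X_v$, so the scaling parameters of the family will realize precisely the level function $\ell$, and hence the level graph $\overline{\Gamma}$. The hard part will not be the individual plumbings but the simultaneous existence of a consistent system of modification differentials $\xi_Y$ along every connected component above every level; this is exactly what the global residue condition~(4) is designed to guarantee, via the sum-zero hypothesis needed to apply Lemma~\ref{le:resmodification}.
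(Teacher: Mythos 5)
Your proposal uses the same tools as the paper (Lemma~\ref{le:resmodification} for modification differentials, classical and higher-order plumbing, Lemma~\ref{lm:merge} for re-merging dispersed zeros), and you correctly identify the global residue condition as the consistency hypothesis that makes the modification differentials exist. However, you propose a one-shot, node-by-node construction, whereas the paper proceeds by induction on the number of levels, smoothing $X_{>L}$ before constructing the modification differential for the nodes joining it to $X_{=L}$. This difference is not cosmetic, and there are two genuine gaps.

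First, you propose to apply Lemma~\ref{lm:merge} to re-merge the dispersed zeros near $q^+$ itself. This cannot work. After adding $t^b\xi_Y$, the form near $q^+$ is an $\alpha$-deformation of $w^k\,dw$ with $\alpha_t(0) = -t^br(t) \neq 0$, so it acquires a \emph{simple pole} at $q^+$ (with residue needed to cancel the lower-side residue), plus $k+1$ nearby zeros --- it does not vanish there. Lemma~\ref{lm:merge} requires vanishing to order $k$ and, in its proof, that $\alpha_t(0) \equiv 0$; it is designed for residue-free clusters of zeros. Moreover, even if you could merge, you would erase the very residue that Theorem~\ref{thm:plumbgeneral} uses to match the residue of the order-$(k+2)$ pole on the lower branch, and the plumbing would then fail. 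The merging step is only needed for the marked zeros away from the nodes, as the paper does; the $\alpha$-deformed zero at $q^+$ is handled directly by Theorem~\ref{thm:plumbgeneral} via the plumbing fixture.

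Second, your modification differential $\xi_Y$ is constructed on the trivial family $Y\times\Delta$ for the \emph{nodal} curve $Y$, so $\xi_Y$ has simple poles (with opposite residues) at the internal nodes of $Y$. Adding $t^{-L}\xi_Y$ therefore alters the order and residue of the differential at precisely those internal nodes that still need to be plumbed at the higher levels. Your sketch treats the plumbings as independent local operations in disjoint neighborhoods, but each $\xi_Y$ is global on $Y$, and the several $\xi_{Y_L}$ one would add all contribute poles at the same internal nodes at different scales in $t$. The bookkeeping that would be required to show these contributions assemble into admissible inputs for Theorem~\ref{thm:plumbgeneral} at every node simultaneously is not done and is exactly the subtlety the paper's induction is built to avoid: in the paper, $\xi_t$ is constructed on the family $\calY$ obtained from the inductive hypothesis, whose fibers are already \emph{smooth} for $t \neq 0$, so $\xi_t$ has no spurious poles at (former) internal nodes and the plumbing at the new bottom level is clean.

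Your divisibility normalization (rescale $\ell$ by $\mathrm{lcm}(k_q+1)$ over all strict nodes) is correct and does achieve the analogue of the paper's base change in one step; that part is fine.
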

\par
The sufficiency part of the main theorem follows immediately from this proposition:
\par
\begin{proof}[Proof of Theorem~\ref{thm:main}, conditions are sufficient]
Let~$f$ be the family of pointed stable differentials given by Proposition~\ref{add}.
Note that this proposition constructs everything required in Theorem~\ref{thm:main}.
In particular, $\eta$ determines the limit location of the marked zeros and poles, and the
components of~$\eta$ are just multiples of the differentials in the
degenerating family. It follows that the central fiber of~$f$ is
indeed $(X,\omega,z_1,\ldots,z_n)$.
\end{proof}
\par
In order to prove Proposition~\ref{add}, we apply induction on the number of levels of the graph. We first
use classical plumbing as a base case, then apply higher order plumbing to smooth the nodes connecting the bottom to upper levels,
and finally merge dispersed zeros to make sure that we end up in the desired stratum.
\par
\begin{proof}[Proof of Proposition~\ref{add}] For the
induction it will be useful that we allow the curve $X$ to be possibly disconnected.
Hence we extend the notion of the number of levels of
a graph to the disconnected case as the maximum of numbers of levels
over all connected components. The proof is then by induction on the
number~$N$ of levels of a possibly disconnected curve~$X$.
\par
The {\em base case of induction} is when the number of levels of $X$
is equal to one, namely, we have $v_1\asymp v_2$ for any two irreducible
components of~$X$. In this case by
conditions~(1) and~(2) all poles at the nodes are
simple poles, of opposite residues on the two sides. Moreover, every
component is maximal for the order, and thus the pointed stable differential
is simply equal to $\eta$.
This case follows directly by applying classical plumbing given in Proposition~\ref{prop:classical_plumbing} to every node one by one,
which shows that any such differential is plumbable. This procedure
obviously preserves the type of the differentials outside the nodes, and
we thus obtain a family in the desired stratum $\proj\omoduli(\mu)$.
\par
For the {\em inductive step}, suppose that for any \twd~$\eta'$ on a (possibly disconnected) nodal curve $X'$ with a full order $\overline{\Gamma'}$ such that the number of levels in~$\overline{\Gamma'}$ is at most $N-1$, satisfying all the
conditions of the theorem, there
exists a degenerating family $(f':\famcurv' \to\Delta, \famomega', \calZ_1, \ldots, \calZ_{n'})$
of pointed stable differentials such that $\eta'$ is the collection of scaling limits of this family
in the sense of Lemma~\ref{le:scalinglimit}. Moreover, we may suppose that~$\overline{\Gamma'}$
is the level graph defined by the function~$\ell(\cdot)$ given by the scaling
parameters of the family.
\par
To prove Proposition~\ref{add} by induction, starting with a \twd~$\eta$ on $X$ compatible
with a full order $\overline\Gamma$ with~$N$
levels given by a level function~$\ell$, we need to construct a family degenerating
to it, satisfying the conditions claimed. Let $L$ be the minimum value of $\ell$ on~$\Gamma$ corresponding to the bottom level of $\overline{\Gamma}$.
Let~$Y_{>L}$ and~$Z$ be the (possibly disconnected) stable subcurves of $X$ corresponding to the
graphs $\overline{\Gamma}_{>L}$ and $\overline{\Gamma}_{=L}$ respectively.
\par
The restriction of $\eta$ to~$Y_{>L}$ is again a \twd, in the sense that it is a \twd\
in our original definition for each connected component of~$Y_{>L}$. Since
the conditions (0)--(3) are local and condition~(4) is imposed level-by-level,
we conclude that the restriction of $\eta$ to~$Y_{>L}$ satisfies all the conditions (0)--(4).
By the inductive assumption, there thus exists a family of differentials
$$
  \left(f_{>L}: \calY \to \Delta,\, \famomega_{>L} :\Delta^{\ast}\to
  (f_{>L})_* \dualsheave[\calY/\Delta](-\seczero_{\rm pol})\right)
$$
satisfying the conditions in Proposition~\ref{add}.
\par
Next, we want to plumb the family~$(\calY, \famomega_{>L})$ to the constant family~$\calZ \coloneqq Z\times\Delta\to\Delta$ with differential
$\eta|_{Z}$ along the
nodes where $\overline{\Gamma}_{>L}$ and $\overline{\Gamma}_{=L}$ intersect, by using higher order
plumbing provided by Theorem~\ref{thm:plumbgeneral}. In this theorem the exponent $b$ in~\eqref{eq:plumbdiff} needs to be divisible by $k+1$. Hence our
first task is to adjust the initially chosen level function $\ell$ in order
to satisfy the divisibility constraint.
\par
Let $\lbrace q_j\rbrace_{j \in J}$ be the set of nodes where $\overline{\Gamma}_{>L}$~and~$\overline{\Gamma}_{=L}$
intersect. Denote by $k_j \geq 0$ the order of $\famomega_{>L}$ at the branch~$q_j^+$
of such a node. For convenience we introduce $\kappa_j\coloneqq k_j + 1$ and let
$K \coloneqq {\rm lcm}(\kappa_j: j \in J)$. Pulling back the family~$f_{>L}: \calY \to \Delta$
via the base change $t\to t^K$ on~$\Delta$ we can from now on suppose that all the
scaling parameters appearing in the family~$f_{>L}$ are multiples of~$K$, while the maximum
of the scaling parameters is still zero. For ease of notation, we continue to use $\ell(\cdot)$ for the new level
function on $\overline{\Gamma}_{>L}$ given by the scaling parameters of~$f_{>L}$ after the base change.
Because of the choice of~$K$ we can find a negative
integer as the new value of the lowest level $\ell(Z)$, still denoted by $L$ (again for ease of notation),
such that
\begin{equation} \label{eq:Lcondition}
\ell(v^+(q_j))- a_j\kappa_j  \= L \quad \text{for all} \quad j \in J\,
\end{equation}
for some positive integers~$a_j$. In this way, the corresponding level graph remains the same as $\overline{\Gamma}$.
\par
The second task is to construct the modification differential~$\xi$ needed in
Theorem~\ref{thm:plumbgeneral}. We consider each connected component $\calY^0$
of~$\calY$ separately. We want to define a family of meromorphic
differentials~$\xi|_{\calY^0}$ such that
$$
  \Res_{q_j^+}(\xi|_{Y^0_t}) \= -\Res_{q_j^-}(\eta|_{Z}) \quad
  \text{for all nodes~$q_j$ in each fiber $Y^0_t$ of $\calY^0$}.
$$
Such a family of differentials exists by Lemma~\ref{le:resmodification} under the assumption that the sum of
residues at those nodes is zero in each $Y^0_t$. If~$Y^0_t$ does not contain a marked point corresponding to
a prescribed pole, the global residue condition~(4) that we imposed gives precisely the desired assumption.
If~$Y^0_t$ contains a marked pole $z_p$, then we allow $\xi|_{Y^0_t}$ to
have a simple pole at $z_p$ as well, with $\Res_{z_p}(\xi|_{Y^0_t})$ equal to
minus the sum of all $\Res_{q_j^+}(\xi|_{Y^0_t})$ appearing in the above.
In this way, we can still apply Lemma~\ref{le:resmodification} to obtain the desired~$\xi$.
The modification differential~$\xi$ will disappear in the scaling limit, because it
is further scaled by a positive power $t^b$ in the setting of Theorem~\ref{thm:plumbgeneral}. Moreover for $t$ small enough, adding $t^b\xi$ does not change the type of the polar part of the differential on the smooth locus of~$Y^0_t$.
\par
We now plumb the family $\calY \cup \calZ$ joined along the nodes $q_j$, for
$j \in J$, where $\calY$ and $\calZ$ play the role of $\calX^+$ and  $\calX^-$ in the notation of Theorem~\ref{thm:plumbgeneral}.
For each $q_j$, set $c =  - \ell(v^+(q_j))$ and $b = a_j \kappa_j$ in the notation of Theorem~\ref{thm:plumbgeneral}. Then
$b+c = - L$ by~\eqref{eq:Lcondition}, hence the scaling factor $t^{b+c} = t^{-L}$ for the modification differential $\xi$ and for the differential
$\eta|_{Z}$ on $\calZ$ is independent of $q_j$ for $j\in J$. Therefore,
we can apply Theorem~\ref{thm:plumbgeneral} simultaneously to plumb all the nodes $q_j$, for $j\in J$, with fixtures $\VV_{a_j}$ and desired model differentials $\Omega_j$ locally around all $q_j$.
As a result, we thus obtain the plumbed family
$$
  \left(f_{\rm nod}: \calX_{\rm nod} \to \Delta, \,\,
  \famomega_{\rm nod} :\Delta^{\ast}\to (f_{\rm nod})_*\, \dualsheave[\calX_{\rm nod}/\Delta](-\seczero_{\rm pol})\right)
$$
of differentials whose scaling limit is the \twd~$\eta$
and where the condition of non-vanishing on top level still holds.
\par
There are two things left to modify in order to obtain the desired family.
First, we apply classical plumbing in Proposition~\ref{prop:classical_plumbing} to smooth the nodes with simple poles in~$f_{\rm nod}$, originally from the constant family $\calZ$,
to obtain a family of differentials
$$
  \left(f_{\rm zeros}: \calX_{\rm zeros} \to \Delta, \,
  \famomega_{\rm zeros} :\Delta^{\ast}\to (f_{\rm zeros})_*\, \dualsheave[\calX_{\rm zeros}/\Delta](-\seczero_{\rm pol})\right)
$$
with smooth fibers for~$t\neq 0$, and with the same scaling limit~$\eta$ as that of $\famomega_{\rm nod}$.
Next, this family $(f_{\rm zeros}, \famomega_{\rm zeros})$ may not belong to
the desired stratum, because the
orders of marked zeros may have been altered when adding the modification differential~$\xi$
to the family of differentials $\famomega_{>L}$ on $\calY$. But by Lemma~\ref{lm:merge} there exists a merging in
the neighborhood of each marked zero, such that the nearby dispersed zeroes of $\famomega_{\rm zeros}$ with total multiplicity $m_i$
are merged back together to form a single zero of multiplicity $m_i$.
We thus finally obtain a family of smooth Riemann surfaces along with differentials in the desired stratum, converging to the given stable curve, and the scaling limit of the differentials is the \twd\ $\eta$ that we started with.
\end{proof}

\section{Flat geometric smoothing}
\label{sec:flatproof}

In this section we give an alternative proof of the sufficiency of the conditions in
Theorem~\ref{thm:main} using techniques from flat geometry. The core ideas
of the proof and the induction procedure are parallel to the proof by plumbing
in the preceding section. However, the flat geometric pictures
presented in this section might look more familiar to some readers. In particular,
they make the necessity of the global residue condition quite transparent. One
difference is that we construct a degenerating family only over a real segment
$[0,\varepsilon)$
rather  than a disc~$\Delta_\varepsilon$ as in the proof via plumbing.
\par
For any \twd~$\eta$ compatible with a full order, our goal is to construct
a family of flat surfaces in the appropriate stratum
that makes the degeneration to~$\eta$ visible. We will first describe the construction,
and then in the last part of this
section justify that the family we construct indeed converges
to the desired limit. Below we start with two examples that illustrate the main
features of the flat geometric proof.

\subsection{Two illustrative examples}

In the first example, suppose $X$ is a nodal curve
with two irreducible components~$Y$ and~$Z$ of genus~$1$ and~$2$
respectively, joined at two nodes. Let $\overline{\Gamma}$
be a level graph of $X$ such that~$Y$ and~$Z$ are of the same level.
Let $\eta = (\eta_Y,\eta_Z)$ be a \twd~in terms
of the flat geometric pictures on the left and in the middle of
Figure~\ref{fig:simplepole}. In particular, $\eta_Y$ has two simple poles at $q_1^{+}$ and $q_{2}^{+}$,
$\eta_Z$ has two simple poles at $q_1^{-}$ and $q_{2}^{-}$, and
$\Res_{q_i^{+}}(\eta_Y) + \Res_{q_i^{-}}(\eta_Z) = 0$ for $i= 1, 2$,
since the widths of the strips agree.

\begin{figure}
\begin{tikzpicture}[scale=0.8]
\tikzstyle{every node}=[font=\scriptsize]
\draw[dashed] (1,2) -- (1,3);
\draw (1,3) node(1){} -- (1,4) node(2){} -- (.5,4.5) node(3){} -- (1,5) node(4){} -- (1,6.5);
\draw[dashed] (1,6.5) -- (1,7.5);
\draw[dashed] (2.5,2) -- (2.5,3);
\draw (2.5,3) node(5){} -- (2.5,4) node(6){} -- (3,4.5) node(7){} -- (2.5,5) node(8){} -- (2.5,6.5);
\draw[dashed] (2.5,6.5) -- (2.5,7.5);

\node () at (1.75,.8) {$\Omega M_1 (2,-1,-1)$};

\node () at (1.8,7.7) {$q_1^+$};
\node () at (1.8,1.7) {$q_2^+$};
\node () at (6.8,7.7) {$q_2^-$};
\node () at (6.8,1.7) {$q_1^-$};
\node () at (.8,3) {$b_{1}$};
\node () at (.6,4.1) {$v_{2}$};
\node () at (.6,4.9) {$v_{1}$};
\node () at (.8,6.5) {$a_{1}$};
\node () at (2.9,4.1) {$v_{1}$};
\node () at (2.9,4.9) {$v_{2}$};
\node () at (2.7,3) {$b_{1}$};
\node () at (2.7,6.5) {$a_{1}$};

\begin{scope}[xshift=-.2cm]

\draw[dashed] (6,2) -- (6,3);
\draw (6,3) node(9){} -- (6,4) node(10){} -- (5,4) node(11){} -- (4.5,4.5) node(12){} -- (5,5)
             node(13){} -- (6,5) node(14){} -- (6,6.5);
\draw[dashed] (6,6.5) -- (6,7.5);
\draw[dashed] (7.5,2) -- (7.5,3);
\draw (7.5,3) node(15){} -- (7.5,4) node(16){} -- (8.5,4.5) node(17){} -- (9.5,4.5) node(18){} -- (10,5)
             node(19){} -- (9.5,5.5) node(20){} -- (8.5,5) node(21){} -- (7.5,5) node(22){} -- (7.5,6.5);
\draw[dashed] (7.5,6.5) -- (7.5,7.5);

\node () at (6.75,.8) {$\Omega M_2 (3,1,-1,-1)$};

\node () at (5.8,3) {$a_{2}$};
\node () at (7.7,3) {$a_{2}$};
\node () at (5.5,3.8) {$w_{5}$};
\node () at (4.6,4.1) {$w_{2}$};
\node () at (4.6,4.9) {$w_{1}$};
\node () at (5.5,5.2) {$w_{3}$};
\node () at (5.8,6.5) {$b_{2}$};
\node () at (7.7,6.5) {$b_{2}$};
\node () at (8,5.2) {$w_{5}$};
\node () at (8.9,5.4) {$w_{4}$};
\node () at (9.9,5.35) {$w_{2}$};
\node () at (9.95,4.65) {$w_{1}$};
\node () at (8.9,4.3) {$w_{3}$};
\node () at (8,4.1) {$w_{4}$};

\end{scope}

\begin{scope}[xshift=-.7cm]

\draw (12.5,1.5) node(23){} -- (14,1.5) node(24){} -- (14,3) node(25){} -- (14.5,3.5) node(26){} -- (14,4)
             node(27){} -- (14,7) node(28){} -- (15,7.5) node(29){} -- (16,7.5) node(30){} -- (16.5,8)
             node(31){} -- (16,8.5) node(32){} -- (15,8) node(33){} -- (14,8) node(34){} -- (14,9.5)
             node(35){} -- (12.5,9.5) node(36){} -- (12.5,8) node(37){} -- (11.5,8) node(38){} -- (11,7.5)
             node(39){} -- (11.5,7) node(40){} -- (12.5,7) node(41){} -- (12.5,4) node(42){} -- (12,3.5)
             node(43){} -- (12.5,3) node(44){} -- (12.5,1.5) node(45){} -- (14,1.5);
\draw [<->] (13.8,1.5) -- (13.8,3);
\draw [<->] (13.8,4) -- (13.8,5.5);
\draw [<->] (13.8,5.5) -- (13.8,7);
\draw [<->] (13.8,8) -- (13.8,9.5);

\node () at (13.25,.8) {$\Omega M_4 (3,2,1)$};

\node () at (12.3,2.25) {$b^{1}$};
\node () at (14.2,2.25) {$b^{1}$};
\node () at (12.1,3.1) {$v_{2}$};
\node () at (12.1,3.9) {$v_{2}$};
\node () at (14.3,3.1) {$v_{1}$};
\node () at (14.3,3.9) {$v_{1}$};
\node () at (12.3,5.5) {$a$};
\node () at (14.2,5.5) {$a$};
\node () at (12,6.8) {$w_{5}$};
\node () at (11.15,7) {$w_{2}$};
\node () at (11.15,7.9) {$w_{1}$};
\node () at (12,8.2) {$w_{3}$};
\node () at (12.3,8.75) {$b^{2}$};
\node () at (14.3,8.75) {$b^{2}$};
\node () at (14.5,8.2) {$w_{5}$};
\node () at (16.4,8.4) {$w_{2}$};
\node () at (16.4,7.6) {$w_{1}$};
\node () at (15.5,7.3) {$w_{3}$};
\node () at (13.25,1.3) {$c$};
\node () at (13.25,9.7) {$c$};
\node () at (13.6,2.25) {$\frac{1}{\vert t\vert}$};
\node () at (13.6,4.75) {$\frac{1}{\vert t\vert}$};
\node () at (13.6,6.25) {$\frac{1}{\vert t\vert}$};
\node () at (13.6,8.75) {$\frac{1}{\vert t\vert}$};
\end{scope}

\end{tikzpicture}
\caption{Smoothing two pairs of simple poles} \label{fig:simplepole}
\end{figure}
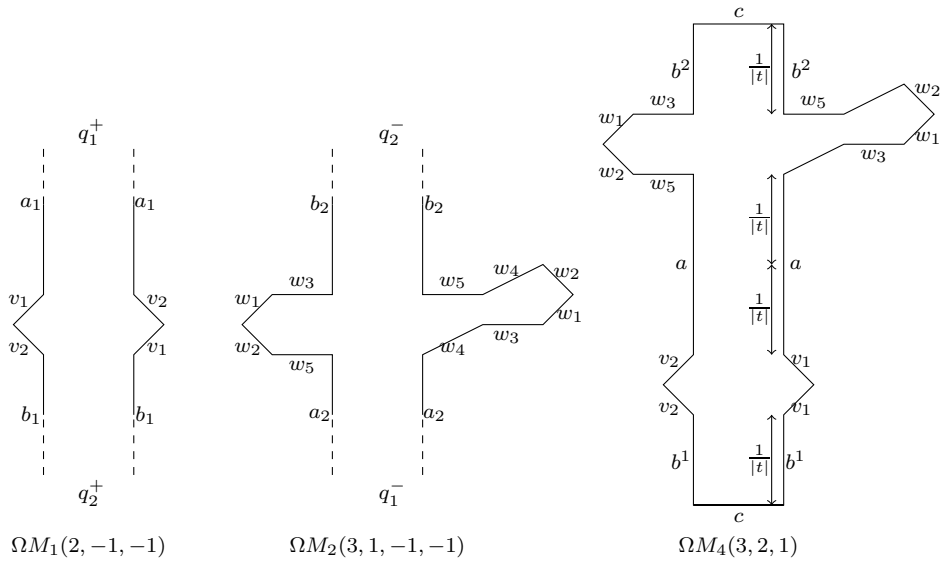

The pairs of points $(q_i^+,q_i^-)$, $i=1,2$ at the ends of the infinitely long
strips are then glued together to form the two nodes of $X$. Obviously~$\eta$
is compatible with~$\overline{\Gamma}$.
\par
We want to show that
the pointed stable differential defined by~$\omega = \eta$ with~marked points $z_i$ being
the zeros of $\omega$ is a limit as flat surfaces in the stratum~$\omoduli[4](3,2,1)$ degenerate.
For this purpose, it suffices to truncate each of the half-infinite cylinders at a
finite height $R=1/|t|$ and glue them along 'seam' strips of
some fixed height~$\varepsilon$, where the \emph{matching residues} at each pair of simple poles ensure that
the gluing procedure can be carried out by translation. For $t \to 0$ the flat surfaces obtained in this construction
visibly degenerate to $(X,\omega, z_1, z_2, z_3)$, and we will justify the convergence in general
in the proof at the end of the section. This example is an instance of components joined by {\em flat cylinders} in the
language of~\cite{RafiThTh} or, equivalently, the flat geometric viewpoint of classical plumbing (Proposition~\ref{prop:classical_plumbing}).
\par
\smallskip
In the second example, suppose $X$ is a nodal curve
with two irreducible components~$Y$ and~$Z$ of genus~$2$ and~$1$
respectively, joined at three nodes. Let $\overline{\Gamma}$
be a level graph of $X$ such that $Y \succ Z$.
Let $\eta = (\eta_Y,\eta_Z)$ be a \twd~
in terms of the flat geometric pictures in Figure~\ref{cap:ExTop} (without the slit in the interior)
and Figure~\ref{cap:ExBottom}. The flat surface~$Z$ consists of~$5$ copies of the Euclidean plane (with modifications in the center),
each of which is drawn in Figure~\ref{cap:ExBottom} as a disk of
large radius. Parallel edges with the same labels are identified via translation.
We define~$q_1^-$ to be the point at infinity of the first two top discs,
$q_3^-$ at infinity of the third, and $q_2^-$ at infinite of the bottom two
discs. The pairs of points $(q_i^+,q_i^-)$ for $i=1,2,3$ are
identified together to form the three nodes of $X$.
\par
\begin{figure}
\begin{tikzpicture}[scale=1.5]
\fill (0,0) coordinate (y1) circle (2pt);
\fill (1,-0.5) coordinate (y2) circle (2pt);
\fill (2,0) coordinate (y3) circle (2pt);
\fill (3,0) coordinate (y4) circle (2pt);
\fill (4,-1.5) coordinate (y5) circle (2pt);
\fill (3.5,-3) coordinate (y6) circle (2pt);
\fill (2.5,-2.5) coordinate (y7) circle (2pt);
\fill (1.5,-3) coordinate (y8) circle (2pt);
\fill (0.5,-3) coordinate (y9) circle (2pt);
\fill (-0.5,-1.5) coordinate (y10) circle (2pt);

\filldraw[color=black!10!] (y1) -- (y2)--    (y2) -- (y3)--   (y3) -- (y4)--   (y4) -- (y5)-- (y5) -- (y6)-- (y6) -- (y7)--   (y7) -- (y8)--   (y8) -- (y9)--   (y9) -- (y10)--  (y10) -- (y1);
 \draw[postaction={decorate}] (y1) -- (y2);  \draw[postaction={decorate}] (y2) -- (y3); \draw[postaction={decorate}] (y3) -- (y4); \draw[postaction={decorate}] (y4) -- (y5); \draw (y5) -- (y6);\draw (y6) -- (y7); \draw[postaction={decorate}] (y7) -- (y8); \draw[postaction={decorate}] (y8) -- (y9); \draw[postaction={decorate}] (y9) -- (y10);\draw[postaction={decorate}] (y10) -- (y1);
\draw (y1) circle (2pt);
\draw (y3) circle (2pt);
\draw (y5) circle (2pt);
\draw (y7) circle (2pt);
\draw (y9) circle (2pt);
\draw (y1) --(y2) node [above, midway] {$V_2$};
\draw (y2) --(y3) node [above left, midway] {$V_3$};
\draw (y3) --(y4) node [above, midway] {$V_4$};
\draw (y4) --(y5) node [right, midway] {$V_5$};
\draw (y5) --(y6) node [right, midway] {$V_1$};
\draw (y6) --(y7) node [below, midway] {$V_2$};
\draw (y7) --(y8) node [below, midway] {$V_3$};
\draw (y8) --(y9) node [below, midway] {$V_4$};
\draw (y9) --(y10) node [below, midway] {$V_5$};
\draw (y10) --(y1) node [left, midway] {$V_1$};

\fill[color=white] (y1) circle (2pt);
\fill  (y2) circle (2pt);
\fill[color=white]  (y3) circle (2pt);
\fill  (y4) circle (2pt);
\fill[color=white]  (y5) circle (2pt);
\fill  (y6) circle (2pt);
\fill[color=white]  (y7) circle (2pt);
\fill  (y8) circle (2pt);
\fill[color=white]  (y9) circle (2pt);
\fill  (y10) circle (2pt);
\begin{scope}
\clip (y1)--(y2)--(y3)--(y4)--(y5)--(y6)--(y7)--(y8)--(y9)--(y10);
\draw[dashed] (y1) circle (0.3cm);
\draw[dashed] (y2) circle (0.3cm);
\draw[dashed] (y3) circle (0.3cm);
\draw[dashed] (y4) circle (0.3cm);
\draw[dashed] (y5) circle (0.3cm);
\draw[dashed] (y6) circle (0.3cm);
\draw[dashed] (y7) circle (0.3cm);
\draw[dashed] (y8) circle (0.3cm);
\draw[dashed] (y9) circle (0.3cm);
\draw[dashed] (y10) circle (0.3cm);
\end{scope}
\begin{scope}
\clip (y1)--(y2)--(y3)--(y4)--(y5)--(y6)--(y7)--(y8)--(y9)--(y10);
\draw[dashed] (y1) circle (0.4cm);
\draw[dashed] (y2) circle (0.4cm);
\draw[dashed] (y3) circle (0.4cm);
\draw[dashed] (y4) circle (0.4cm);
\draw[dashed] (y5) circle (0.4cm);
\draw[dashed] (y6) circle (0.4cm);
\draw[dashed] (y7) circle (0.4cm);
\draw[dashed] (y8) circle (0.4cm);
\draw[dashed] (y9) circle (0.4cm);
\draw[dashed] (y10) circle (0.4cm);
\end{scope}
\draw[dashed] (2.9,-1.5) circle (0.3cm);
\draw[dashed] (2.9,-1.5) circle (0.4cm);
\node[above] at (y2) {$q_{1}^{+}$};
\node[below] at (y7) {$q_{2}^{+}$};
\node[] at (3,-1.4) {$q_{3}^{+}$};
\node[] at (3.1,-1) {$A_{3}^{+}$};
\node[] at (0.1,-1.4) {$A_{1}^{+}$};
\node[] at (.7,-2.5) {$A_{2}^{+}$};

\begin{scope}[xshift=2.7cm,yshift=-2cm]
\coordinate (a) at (0, 0);
\coordinate (b) at (-0.4, 0);
\coordinate (c) at (-0.25, 0.2);
\coordinate (d) at ($ (a)!.4cm!90:(b) $);
\coordinate (e) at ($ (b)!.4cm!-90:(a) $);
\coordinate (f) at ($ (a)!.5cm!-90:(c) $);
\coordinate (g) at ($ (c)!.5cm!90:(a) $);
\coordinate (h) at ($ (b)!1cm!90:(c) $);
\coordinate (i) at ($ (c)!1cm!-90:(b) $);
\coordinate (j) at ($ (h)!.5cm!60:(i) $);
\coordinate (k) at ($ (i)!.5cm!-120:(h) $);
\coordinate (l) at ($ (j)!.4cm!90:(k) $);
\coordinate (m) at ($ (k)!.4cm!-90:(j) $);

\filldraw[color=white!50!] (a)--(d)--(e)--(b)--(h)--(j)--(l)--(m)--(k)--(i)--(c)--(g)--(f)--(a);
\draw[] (a)--(d)--(e)--(b)--(h)--(j)--(l)--(m)--(k)--(i)--(c)--(g)--(f)--(a);
\draw[dotted] (a)-- (b) --(c)-- (a);
\node[above]  at ($ (d)!.5!(e) $) {$t r_{2}$};
\node[below ]  at ($ (f)!.84!(g) $) {$t r_{3}$};
\node[  below right]  at ($ (l)!.45!(m) $) {$t r_{1}$};
\end{scope}

\end{tikzpicture}
\caption{Top level flat surface~$Y$ inside $\omoduli[2](1,1,0)$}  \label{cap:ExTop}
\end{figure}

\begin{figure}
\begin{tikzpicture}
\begin{scope}[xshift=-1.4cm]
\draw[dashed, fill=black!10] (0,0) coordinate (Q) circle (1.9cm);
     \node(A1) at (1.9,0.0){};
     \node(A2) at (-0.1,0){};
     \node(A3) at (0,0){};
     \node(A4) at (0,0){};

\draw[dashed, fill=black!10] (0,0) coordinate (Q) circle (1.5cm);

\draw[] (-0.4, 0) coordinate (q1) -- (0.37, 0) coordinate (q2);

\filldraw[color=white!50!](-0.4, 0) -- (2,-1.68)--(2.3,-1.23) --(0.37, 0)-- (-0.13, 0.35)-- (-.4,0) ;

\draw[] (0.33,0.05) -- (1.9,0.05);
\draw[] (0.4,-0.05) -- (1.9,-0.05);

\draw[dashed](0.37, 0)--(0.1, -0.35) coordinate (q4);
\draw[](-0.13, 0.35)  coordinate (q3)--(-0.4, 0);
\draw[](-0.13, 0.35)--(0.37, 0);

\draw[] (-0.4, 0) -- (1.4,-1.25);
\draw[] (1.67,-0.8) -- (0.37, 0);

\node[above](R) at (1.2,0){$L_1$};
\node[below](R) at (1.2,0){$L_2$};

\draw[dashed] (1.4,-1.25) -- (2,-1.65);
\draw[dashed] (1.7,-0.83) -- (2.3,-1.23);

\node[above](R) at (-.47,.03){$w_{1}$};
\node[above](R) at (.22,0.01){$w_{2}$};
\node[above](R) at (-.25,-.52){$w_{3}$};

\node[above](R) at (1.6,-0.8){$a$};
\node[above](R) at (1.05,-1.5){$a$};
\node[below right] at ($(.3, 0.05) !.3! (0, -0.35) $) {$r_{1}$};

\fill (q1)  circle (2pt);\fill (q2) circle (2pt);
\fill (q3)  circle (2pt);
\fill (q4)  circle (2pt);

\end{scope}

\begin{scope}[xshift=-3.5cm]

\draw[dashed, fill=black!10] (7,0) coordinate (Q) circle (1.9cm);
     \node(A5) at (8.9,0.0){};
     \node(A6) at (6.9,0){};
     \node(A7) at (7,0){};
     \node(A8) at (7,0){};

\draw[dashed, fill=black!10] (7,0) coordinate (Q) circle (1.5cm);

\fill (Q)  circle (2pt);
\draw[] (7,0.05) -- (8.8,0.05);
\draw[] (7,-0.05) -- (8.8,-0.05);

\node[above](R) at (8,0){$L_2$};
\node[below](R) at (8,0){$L_1$};

\node at (5.5,1.7) {$A_{1}^{-}$};

\end{scope}

\begin{scope}[xshift=3cm]

\draw[dashed, fill=black!10] (5.4,0) coordinate (Q) circle (1.9cm);
     \node(A1) at (1.9,0.0){};
     \node(A2) at (-0.1,0){};
     \node(A3) at (0,0){};
     \node(A4) at (0,0){};

\draw[dashed, fill=black!10] (5.4,0) coordinate (Q) circle (1.5cm);
\filldraw[color=white!50!](5.75, -0.05) --(4.5,-1.7)--(4,-1.35)--  (5.25, 0.32)    --cycle;

\draw[]  (5.75, -0.05) --(4.5,-1.67);
\draw[]  (5, 0.02) --(4,-1.29);

\draw[dashed] (4.5,-1.67) -- (4.05,-2.25);
\draw[dashed] (4,-1.29) -- (3.5,-1.93);
\draw[dashed] (5,0.02) coordinate (q1) -- (5.54,-0.36) coordinate (q2);
\draw[] (5.25,0.32) coordinate (R) -- (5.75,-0.05) coordinate (q3);
\draw[] (R) -- (5,0.02);

\fill (q1)  circle (2pt);\fill (q2) circle (2pt);
\fill (q3)  circle (2pt);
\fill (R)  circle (2pt);
\node[below](R) at (5.63,0.45){$w_{3}$};
\node[below](R) at (5.86,-0.07){$w_{1}$};
\node[left](R) at (5.18,0.25){$w_{4}$};

\node[below](R) at (4.4,-0.3){$c$};
\node[below](R) at (5.1,-0.9){$c$};

\node[below] at ($(5,0.02) !.3! (5.5,-0.35) $) {$r_{3}$};
\node at (4,1.7) {$A_{3}^{-}$};
\end{scope}

\begin{scope}[yshift=-5cm]

\draw[dashed, fill=black!10] (0,0) coordinate (Q) circle (1.9cm);
     \node(A1) at (1.9,0.0){};
     \node(A2) at (-0.1,0){};
     \node(A3) at (0,0){};
     \node(A4) at (0,0){};

\draw[dashed, fill=black!10] (0,0) coordinate (Q) circle (1.5cm);
\fill (Q)  circle (2pt);
\node[above](R) at (0.9,0){$L_3$};
\node[below](R) at (0.9,0){$L_4$};

\node at (1.6,1.7) {$A_{2}^{-}$};

\draw[dashed, fill=black!10] (7,0) coordinate (Q) circle (1.9cm);
     \node(A5) at (8.9,0.0){};
     \node(A6) at (6.9,0){};
     \node(A7) at (7,0){};
     \node(A8) at (7,0){};

\draw[dashed, fill=black!10] (7,0) coordinate (Q) circle (1.5cm);

\filldraw[color=white!50!] (7.4,2) -- (7.4,0.05) -- (7.1, -0.35) --(6.6,0.05) --(6.6,2)    --cycle;

\draw (7.4,1.8) -- (7.4,0.05);
\draw (6.6,0.05) --(6.6,1.8) ;

\draw[dashed](7.4,1.8)-- (7.4,2.5);
\draw[dashed](6.6,1.8)-- (6.6,2.5);

\node[above](R) at (8,0){$L_4$};
\node[below](R) at (8,0){$L_3$};
\node[below](R) at (6.4,1){$b$};
\node[below](R) at (7.6,1){$b$};

\node[below](R) at (6.7,-0.08){$w_{2}$};
\node[below](R) at (7.47,-0.08){$w_{4}$};

\draw[] (7.4,0.1) -- (8.9,0.1);
\draw[] (7.4,0) -- (8.9,0);

\node[above] at ($(6.6, 0.05) !.5! (7.4, 0.05) $) {$r_{2}$};

\draw[] (0,0.05) -- (1.8,0.05);
\draw[] (0,-0.05) -- (1.8,-0.05);
\draw[] (6.6, 0.05)  -- (7.1, -0.35) --(7.4, 0.05);
\draw[dashed] (6.6, 0.05) -- (7.4, 0.05);

\fill[] (6.6,0.05)  circle (2pt);
\fill[] (7.4,0.05)  circle (2pt);
\fill[] (7.1,-0.35)  circle (2pt);
\end{scope}
\end{tikzpicture}
\caption{Bottom level flat surface~$Z$ inside $\omoduli[1](8,-2,-3,-3)$} \label{cap:ExBottom}
\end{figure}
\par
Since the orders of $\eta$ at $q_i^+$ and $q_i^-$ add up
to~$-2$ and the global residue condition automatically holds by the residue theorem on~$Z$,
we conclude that~$\eta$ is a \twd~compatible with
$\overline{\Gamma}$. The stable differential $\omega$ associated to $\eta$ is equal to $\eta_Y$ on $Y$, and is identically zero on $Z$.
Denoting by $z$ the unique zero of $\eta_Z$, we want to show that
$(X, \omega, z)$ is a limit of flat surfaces in~$\omoduli[5](8)$, by constructing a family
of flat surfaces in this stratum which visibly degenerates to $(X, \omega, z)$.
\par
Our strategy is to remove from~$Y$ a small disk (as a union of metric half-disks) around each point~$q_i^+$,
scale $\eta_Z$ by a smaller factor, take a disk of the same size under the flat geometric presentation for the pole $q_i^-$
in the rescaled surface $Z$, and glue it into~$Y$ along the annuli of the same size around $q_i^{\pm}$.
More precisely, we want to glue along the annuli~$A_i^+$ to~$A_i^-$ for $i=1,2,3$, as presented in Figures~\ref{cap:ExTop} and~\ref{cap:ExBottom}.
To first approximation the total angles around $q_i^+$ and $q_i^-$
match, because the orders of $\eta_Y$ and $\eta_Z$ add up to~$-2$ at the two branches of a node.
The issue is that we cannot directly glue their boundaries, because in each annulus
around $q_i^-$ a slit of 'size' of the residue at that pole is missing.
In order to remedy this problem, we slit the flat surface~$Y$ appropriately,
as drawn in Figure~\ref{cap:ExTop}. The global residue condition ensures that we can remove a central polygon (given by the dashed triangle in this example)
from~$Y$, and slit from the points $q_i^+$ to get a surface in which we
can glue to~$Z$ along the modified annuli~$A_i^+$ with $A_i^-$.
\par
We next make the degeneration process more precise by specifying the
sizes of the annuli and the slits. Fix $0< \delta<1$ close to one. For $t\in (0, \varepsilon)$ sufficiently small,
we may assume that the circles of radius $t^{1/2}$ under the flat metric of~$(Y, \eta_Y)$ around the
points~$q_i^+$ are disjoint and contain no other special points inside (except for $q_i^+$). Moreover, we may assume that
the circles of radius $t^{1/2}\delta$ under the flat metric of~$(Z, t \eta_Z)$ (i.e., rescale $Z$ by $t$) contain all the interior edges of~$(Z, \eta_Z)$ (those with
labels~$w_i$), because these circles have large radius $t^{-1/2}\delta $ under the original flat metric of $\eta_Z$.
\par
After these preparations, we can glue the annuli (modified by the neighborhoods of size $t r_i$ around the slits in $Y$)
between the circles of radius~$t^{1/2}$
and~$t^{1/2}\delta$ with labels~$A_i^+$ to those with labels~$A_i^-$. In particular as $t\to 0$, the annuli are shrinking to $q_i^+$ on $(Y, \eta_Y)$ and expanding to $q_i^-$ on $(Z, \eta_Z)$. This gives the desired family converging to~$(X,\omega, z)$. Again, we will justify the convergence later
in the proof. This example is an instance of components joined by
{\em expanding annuli} in the language of \cite{RafiThTh}, or, equivalently
the flat geometric viewpoint of higher order plumbing (Theorem~\ref{thm:plumbgeneral}).

\subsection{Construction of degenerating families in the general case}
\label{sec:ConDegFam}

The first step is to prove that a slit as in the second example above exists
in general and carry out the construction inductively with
respect to a given level graph.
\par
For simplicity of exposition, we restrict to the case of strata of holomorphic type. The same method works
for the case of strata of meromorphic type. There is only one place where the global residue condition plays a role to distinguish the two cases, and we will remark on it in the construction below. In addition, we may assume that the level graph does not have an edge joining two
vertices on the same level, or equivalently, that a compatible twisted differential has no simple pole at any node. This is because smoothing such a simple polar node with matching residues is a local procedure ensured by classical plumbing, which can be performed in the
flat geometric picture according to the first example above.
\par
Let $(X,\omega,q_1,\ldots,q_N)$ be a Riemann surface with a
meromorphic differential~$\omega$ and marked points~$q_1,\ldots,q_N$
that form a subset of zeros and ordinary points of $\omega$. We let $X^0$ be the open Riemann
surface with disks of radius~$\delta R$ removed around the points~$q_i$, where $0 < \delta<1$ is chosen sufficiently close to one. We refer to the annuli
between the circles of radius~$\delta R$ and~$R$ as
\emph{boundary annuli}.
\par
We next work towards the definition of a residue slit, i.e,
a collection of broken lines in the surface
around which we will modify neighborhoods of size given by residues. We remark that a residue slit is in general more complicated than just a slit. Given a tuple of sufficiently small complex numbers~$T = (r_1,\ldots, r_N)$ with $\sum_{i=1}^N
r_i = 0$, we fix a permutation $\pi \in S_N$ such that the slopes of $r_{\pi(1)},
\ldots,r_{\pi(N)}$ are monotone on~$S^1$. Let $P = P(T,\pi)$ be the polygon whose
edges are given by the vectors $r_{\pi(1)}, \ldots,r_{\pi(N)}$ consecutively. It follows
that $P(T, \pi)$ is convex. Let~$B(P)$ be the barycenter of~$P$. Let~$p$ be a point in~$X^{0}$,
disjoint from the zeros and poles of $\omega$. We place $P$ inside $X^0$ with $B(P) = p$.
\par
A {\em residue slit} for~$(T,\pi)$ is then defined to be a collection of broken lines $(b_1,\ldots,b_n)$ with
the following properties:
\begin{itemize}
\item Each broken line $b_i$ starts with the segment
from $p$ to the midpoint of the edge $e_{\pi^{-1}(i)}$ of the polygon~$P(T,\pi)$, and then
connects to $q_i$. We denote by $b_{ij}$ the line segments constituting $b_i$ and by $\theta(b_{ij})$ the slopes of $b_{ij}$.
\item The broken lines $b_i$ do not intersect (except for the starting point~$p$)
and they are disjoint from the zeros and poles of $\omega$ (except for the endpoints~$q_i$).
 \item The slopes $\theta(b_{ij})$ are different from that of $\pm r_i$ for each $i$.
\end{itemize}
\par
For $t\in (0, \varepsilon)$ sufficiently small, let $t T = (tr_1,\ldots, tr_N)$. We define the {\em surface $X^0_t = X^0_t(T,\pi)$
obtained by modifying neighborhoods of size $tT$ around the residue slit} as follows. Remove $P(T,\pi)$
from $X^0$. In the case that
 $\langle \theta(b_{ij}), r_i \rangle > 0$ (as in Figure~\ref{cap:ExTop} for
all three broken lines), we \emph{remove} neighborhood parallelograms swept out by planar segments with holonomy
vector~$tr_i$ centered at points of $b_{ij}$ in general, and glue the parallel sides of these removed parallelograms in pairs.
On the other hand for each of the segments where $\langle \theta(b_{ij}), r_i \rangle < 0$,  we
\emph{add} a parallelogram swept out by segments of holonomy~$tr_i$ to the existing flat
surface and glue the pieces as indicated in Figure~\ref{cap:ResSlitBack}. A small modification one needs to carry out is that near a turning point,
e.g., the right endpoint of $a_3$ or the left endpoint of $d_1$ in Figure~\ref{cap:ResSlitBack}, we move the removed parallelogram region
up or down by $tr_i/2$, which ensures that adding a parallelogram works near the turning point as the residue slit changes its direction.
The size of~$t$ is constrained by the requirement that the neighborhoods of the residue
slit used in this construction are disjoint and do not contain the zeros and poles of~$\omega$ on $X^0$. In this way we obtain
the desired surface $X^0_t$.

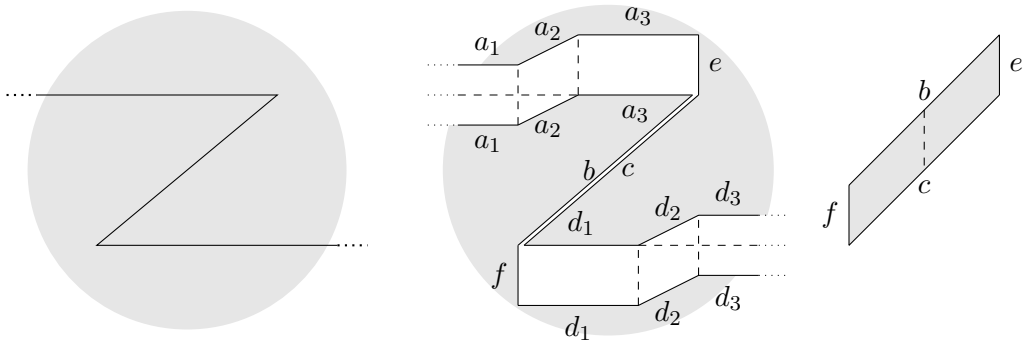
\begin{figure}[htb]
  \centering
\begin{tikzpicture}[scale=.8]

\begin{scope}[xshift=-7cm]
\fill[ color=black!10!] (3.5,2.25) circle (2.65);
     \draw (1,3.5) -- (5,3.5) -- (2,1) -- (6,1);
     \draw[dotted,thick] (.5,3.5) -- (1,3.5)
                         (6,1) -- (6.5,1);
                         \end{scope}
\begin{scope}
\fill[ color=black!10!] (3.5,2.25) circle (2.75);

     \fill[color=white]
      (0,3) -- (2,3) -- (3,3.5) -- (5,3.5) -- (5,4.5) --
      (3,4.5) -- (2,4) -- (0,4) -- cycle;
     \draw
      (1,3) -- (2,3) node[midway,below] {$a_1$}
      (2,3) -- (3,3.5) node[midway,below] {$a_2$}
      (3,3.5) -- (4.9,3.5) node[midway,below] {$a_3$}
       (5,3.5) -- (5,4.5) node[midway,right] {$e$}
      (5,4.5) -- (3,4.5) node[midway,above] {$a_3$}
      (3,4.5) -- (2,4) node[midway,above] {$a_2$}
      (2,4) -- (1,4) node[midway,above] {$a_1$};

      \fill[color=white]
      (2,0) -- (4,0) -- (5,.5) -- (7,.5) -- (7,1.5) --
      (5,1.5) -- (4,1) -- (2,1) -- cycle;
     \draw
      (2,0) -- (4,0) node[midway,below] {$d_1$}
      (4,0) -- (5,.5) node[midway,below] {$d_2$}
      (5,.5) -- (6,.5) node[midway,below] {$d_3$}
      (6,1.5) -- (5,1.5) node[midway,above] {$d_3$}
      (5,1.5) -- (4,1) node[midway,above] {$d_2$}
      (4,1) -- (2.1,1) node[midway,above] {$d_1$}
      (2,1) -- (2,0) node[midway,left] {$f$};

 \fill[color=white]
      (2,1) -- (4.9,3.5) -- (5,3.5) -- (2.1,1) -- cycle;

     \draw (2,1) -- (4.9,3.5) node[midway,left] {$b$} ;
       \draw (2.1,1) -- (5,3.5)  node[midway,right] {$c$};
\draw[dashed,very thin] (1,3.5) -- (3,3.5);
\draw[dashed,very thin] (4,1) -- (6,1);

     \draw[dotted] (.5,3.5) -- (1,3.5)
                         (6,1) -- (6.5,1)
                         (.5,4) -- (1,4)
                         (6,1.5) -- (6.5,1.5)
                         (.5,3) -- (1,3)
                         (6,.5) -- (6.5,.5);
\draw[dashed] (2,3) -- (2,4);
\draw[dashed] (3,3.5) -- (3,4.5);
\draw[dashed] (4,0) -- (4,1);
\draw[dashed] (5,0.5) -- (5,1.5);

     \begin{scope}[xshift=.5cm,yshift=1cm]
     \fill[color=black!10!]
      (7,0) -- (7,1) -- (9.5,3.5) -- (9.5,2.5) -- cycle;
     \draw
      (7,0) -- (7,1) node[midway,left] {$f$}
      (7,1) -- (9.5,3.5) node[midway,above] {$b$}
      (9.5,3.5) -- (9.5,2.5) node[midway,right] {$e$}
      (9.5,2.5) -- (7,0) node[midway,below] {$c$};
     \draw[dashed] (8.25,2.25) -- (8.25,1.25);
      \end{scope}
      \end{scope}
    \end{tikzpicture}
\caption{A piece of a residue slit and the gluing procedure in the case of a backwards slope} \label{cap:ResSlitBack}
\end{figure}

\par
\begin{lm} \label{le:residueslits}
Given a pointed flat surface $(X,\omega,q_1,\ldots,q_N;p)$, for any tuple~$T$ of sufficiently small complex numbers with sum equal to zero,
there exists a residue slit.
\end{lm}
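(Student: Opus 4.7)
The plan is to construct the broken lines $b_i$ in three stages: produce pairwise disjoint simple arcs in $X$ with the prescribed endpoints, approximate them by broken lines in the flat structure of $\omega$, and perturb the result so that the slope constraint is met. The smallness of the entries of $T$ enters only to ensure that the polygon $P = P(T,\pi)$ lies inside a small Euclidean disk around $p$ that is disjoint from the zeros and poles of $\omega$ and from the boundary annuli around the $q_j$. Denote by $m_i$ the midpoint of the edge $e_{\pi^{-1}(i)}$ of $P$.

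First, working in the compact surface $X$ with the open polygon $P^\circ$ removed, I would construct pairwise disjoint simple arcs $\gamma_i$ from $m_i$ to $q_i$, avoiding all zeros and poles of $\omega$ other than the endpoint $q_i$. This is a standard inductive argument on connected surfaces with boundary: pick one arc freely in the current surface, cut along a small tubular neighborhood, and repeat on the complement. At each stage the remaining piece is connected and still contains the unused $m_j$ on its boundary and the unused $q_j$ in its interior, so the induction continues until all $N$ arcs have been produced.

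Second, each $\gamma_i$ lies in an open set on which $\omega$ defines a genuinely Euclidean metric, so I would replace $\gamma_i$ by a sufficiently fine piecewise-linear approximation $\tilde b_i$ with the same endpoints. For a fine enough approximation the $\tilde b_i$ remain pairwise disjoint and meet $P$ only at $m_i$. Prepending the straight segment $\overline{p\,m_i}$ to $\tilde b_i$ yields the broken line $b_i$, which already satisfies the first two properties in the definition of a residue slit.

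Third, I would verify and enforce the slope condition. The initial segment $\overline{p\,m_i}$ is automatically admissible: since $p$ is an interior point of the convex polygon $P$, it does not lie on the line containing the edge $e_{\pi^{-1}(i)}$, so the vector $m_i - p$ is not a scalar multiple of $r_i$; hence the slope of $\overline{p\,m_i}$ differs from that of $\pm r_i$. For the remaining segments of $\tilde b_i$, the forbidden set of directions $\{\pm r_1/|r_1|,\ldots,\pm r_N/|r_N|\}$ is finite, hence nowhere dense in the circle of slopes, so a small generic perturbation of the interior vertices of $\tilde b_i$ (fixing its endpoints) restores admissibility without destroying disjointness. The main obstacle is the topological first step, namely realizing the prescribed matching $m_i \leftrightarrow q_i$ by disjoint simple arcs despite the fact that the cyclic order of the $m_i$ on $\partial P$ is fixed by $\pi$ while the $q_i$ are placed arbitrarily on $X$; this reduces to the purely surface-topological statement that on any connected surface with boundary, a matching between $N$ marked boundary points and $N$ marked interior points can be realized by disjoint embedded arcs, which is handled by the inductive cutting argument described above.
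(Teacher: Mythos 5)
Your argument is correct and follows essentially the same inductive strategy as the paper's one-line proof: construct the arcs from $q_i$ to $p$ one at a time in the complement of the previously placed ones and the singular points, straighten to piecewise-linear paths, and perturb generically to avoid the finitely many forbidden slopes. You supply considerably more detail --- the cut-and-repeat argument for disjoint embeddability, the observation that the barycenter of the convex polygon $P$ cannot lie on the line through any edge (so the initial segment $\overline{p\,m_i}$ is automatically admissible), and the genericity argument for the remaining slopes --- but these are expansions, not departures, from the paper's reasoning.
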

\par
\begin{proof} We produce the broken lines inductively. At each step we need to find
a path from $q_i$ to $p$, avoiding finitely many contractible subsets, namely
the zeros and poles of $\omega$ on $X^0$ and the union of broken lines constructed in the preceding steps.
Such a path clearly exists and can be straightened into broken lines, avoiding slopes of $\pm r_i$.
\end{proof}
\par
Next, we show that every meromorphic differential (without simple poles)
can be presented similarly to Figure~\ref{cap:ExBottom}. The building blocks
of this construction are basic domains, as introduced by
Boissy (see \cite[Section~3.3]{boissymero}). Compared to his definition
we ignore infinite cylinders (i.e., simple poles) because such nodes can be smoothed out locally by the classical plumbing, but we need to allow more flexibility in the direction that we slit.
\begin{figure}[htb]
	\centering
	\begin{tikzpicture}[scale=.8]
	\coordinate (A) at (0,0);
	\coordinate (G) at (2.5,0);
	\path (A) --++(45:1.5cm) coordinate (B) --++(90:.5cm) coordinate (C) --++(0:1cm) coordinate (D)
	--++(90:.75cm) coordinate (E) --++(45:1.5cm) coordinate (F);
	\path[draw] (G) --++(45:1.5cm) coordinate (H) --++(0:.5cm) coordinate (I) --++(45:3.25cm) coordinate (J);
	\fill[color=black!10!]  (A) -- (B) -- (C) -- (D) -- (E) -- (F) -- (0,3.375) -- (A);
	\fill[color=black!10!]  (G) -- (H) -- (I) -- (J) -- (0:6.355cm) -- (G);
	\draw[black] (A) -- (B) -- (C) -- (D) -- (E) -- (F)
	      (G) -- (H) -- (I) -- (J);	

	\foreach \x in {B,C,D,E,H,I}
	\fill[black] (\x) circle (2pt);
	\end{tikzpicture}
	\caption{Two basic domains in the direction $\theta =\pi/4$}
	\label{fig:basicdomains}
\end{figure}
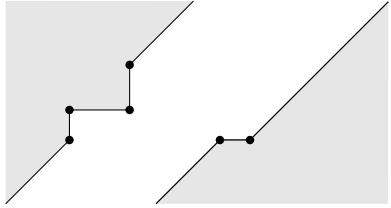
Consequently we define
for a fixed angle $\theta$ a {\em basic domain in the direction $\theta$} to
be a half-plane with broken polygonal boundary, such that the infinite
boundary rays have direction~$\pm e^{i\theta}$ and the finite edges~$w_{j}$
of the boundary satisfy $\left<w_{j},e^{i\theta}\right>>0$ as pictured in Figure~\ref{fig:basicdomains}.
\par
\begin{lm} \label{le:infzipp}
Let $(X,\omega)$ be a meromorphic differential without simple poles, but with~$l$ poles of higher order $|m_{n-l+1}|, \ldots, |m_n|$, respectively. Then for almost every
direction~$\theta$ there exist $2\sum_{i=1}^\ell (|m_{n-l+i}|-1)$
basic domains in the direction $\theta$  such that $(X,\omega)$ is
obtained by gluing the boundary segments of the basic domains by translation.
Moreover, the infinite boundary rays are glued in a way such that the set of basic domains
is partitioned in $l$ cycles of lengths $2|m_{n-l+1}|-2,\ldots, 2|m_{n}|-2$, respectively.
\par
If~$(X,\omega)$ can be represented by basic domains in the direction~$\theta$, then all flat surfaces in a neighborhood
of~$(X,\omega)$ also have this property.
\end{lm}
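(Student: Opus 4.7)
The plan is to realize the basic domain decomposition by cutting $X$ along separatrices of the directional foliation in direction $\theta$ that emanate from the zeros of $\omega$. First, I would use the local normal form from Proposition~\ref{prop:standard_coordinates}: around a pole of order $k = |m_{n-l+i}| \geq 2$ there is a coordinate in which $\omega = (z^{-k} + r/z)\,dz$. Integrating this form shows that, away from a compact neighborhood of the pole, the punctured disc maps isometrically (under the flat metric defined by $\omega$) onto $2(k-1)$ half-planes glued cyclically along half-infinite rays of direction $\pm e^{i\theta}$; these are the candidate basic domains, and they are already partitioned into a cycle of length $2(k-1)$ by the cyclic gluing.

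Next, I would choose $\theta$ outside the countable set of directions that contain a saddle connection — this is the source of the ``almost every direction'' restriction. For such $\theta$, every separatrix in direction $\theta$ issuing from a zero is infinite in both forward and backward time and must escape to a pole, since $\omega$ has no simple poles and the only other ends are conical singularities at zeros (excluded by genericity). Each such separatrix coincides, near the pole it runs into, with one of the half-infinite rays on the boundary of the model sectors described above, because the directional foliation is determined locally by the normal form.

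Cutting $X$ along all these separatrices then decomposes the surface into pieces, each of which is isometric to one of the half-plane sectors around some pole; the finite polygonal boundary edges are precisely the subarcs of separatrices running between consecutive zeros they hit, and they satisfy $\langle w_j, e^{i\theta}\rangle > 0$ by construction. The gluing of finite edges is automatically by translation, since the cuts were made in a translation surface. The total count of basic domains is $\sum_{i=1}^{l} 2(|m_{n-l+i}|-1)$ as claimed, obtained either by summing the sector counts pole by pole or by a Gauss--Bonnet computation; the cyclic grouping of infinite rays is inherited from the local model around each pole.

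For the last assertion, the combinatorial data of the decomposition — which zero bounds which basic domain, the cyclic order around each pole, and the assignment of finite edges to pairs — is stable under small deformation of $(X,\omega)$: no new saddle connection in direction $\theta$ appears under a sufficiently small perturbation, and the local normal form near each pole persists by the parametric version of Proposition~\ref{prop:standard_coordinates}. The finite edge vectors $w_j$ and residues $r$ then vary holomorphically with the deformation parameter, giving the same basic domain representation for all nearby differentials. The main technical obstacle I anticipate is the genericity step: one must rule out that a separatrix issued from a zero fails to reach a pole (e.g.\ by being recurrent) for all but a measure-zero set of directions $\theta$. This follows because a non-closing, non-escaping trajectory forces a limit cycle or accumulation on a saddle connection, whose directions form a countable set.
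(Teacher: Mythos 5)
The paper does not actually prove this lemma: its ``proof'' is the single sentence ``This lemma is a restatement of the infinite zippered rectangle construction of Boissy~\cite{boissymero}.'' Your attempt to supply a self-contained argument is therefore a genuinely different approach, and I'm afraid it has a real gap at the heart of the construction.

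The problem is the claim that cutting $X$ along the separatrices in direction $\theta$ produces a decomposition into the required half-planes, with finite boundary edges given by ``subarcs of separatrices running between consecutive zeros.''  Two things go wrong.  First, the internal inconsistency: you have just chosen $\theta$ so that no separatrix meets another zero (no saddle connections in direction $\theta$), so there are \emph{no} separatrix subarcs running between zeros.  In the basic-domain decomposition the finite edges $w_j$ are saddle connections in various directions with $\langle w_j, e^{i\theta}\rangle >0$ (this is visible in the paper's Figure~\ref{fig:basicdomains} and in Figure~\ref{cap:ExBottom}, where the edges $w_1,\dots,w_5$ are not parallel to $\theta$), not subarcs of the $\theta$-separatrices; if they were $\theta$-parallel, the sign condition would fail for half of them.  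Second, and more fundamentally, the pieces you obtain by cutting along the $\theta$-separatrices are simply not all half-planes, and their number does not match.  Take $\mu = (1,1,-4)$ on $\PP^1$: there are $8$ separatrices from the two simple zeros; in the induced CW structure on the sphere with vertices $\{z_1,z_2,\text{pole}\}$ one computes $F = 2 - V + E = 7$ faces, whereas the lemma requires $2(4-1)=6$ basic domains.  Matching the cone angle $6\pi$ at the pole, one finds six of the pieces are half-planes (bigons through a single zero) but the seventh is an infinite strip, which is not a basic domain.  So the separatrix cut over-cuts the surface and produces a face of the wrong shape; the basic-domain boundary has to be chosen differently, in general using non-$\theta$-parallel saddle connections, which is exactly what Boissy's construction does.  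Your last paragraph on stability under perturbation would be fine if the decomposition existed, and your local analysis near a pole via Proposition~\ref{prop:standard_coordinates} is correct and useful, but the decomposition itself needs a different argument than separatrix cutting.
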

\par
\begin{proof}
This lemma is a restatement of the infinite zippered rectangle construction of Boissy (\cite{boissymero}).
\end{proof}
\par
\medskip
We are ready to construct the desired family of flat surfaces by induction
on the number of levels. Let~$(X,\omega,z_1,\ldots,z_n)$ be a pointed stable differential
associated with a \twd~$\eta$ of type~$\mu$, where $\eta$ is compatible with a full order~$\overline{\Gamma}$ on
$X$. In the sequel we fix a number $0< \delta<1$ sufficiently close to $1$.
\par
First consider the case when $\overline{\Gamma}$ has exactly two levels. We
want to glue the two levels as in the second example above. For each connected
component~$Y_j$ of top level, let $q_1,\ldots,q_N$ be the nodes where $Y_j$ is
joined to the lower level. For $t \in (0,\varepsilon)$
sufficiently small and a positive integer $k$, we construct residue slits using
Lemma~\ref{le:residueslits} for $(t^{k}T_j,\pi)$, where the residues $r_i$ in $T_j$
are given by $r_i = \Res_{q^{-}_i} (\eta_{v^{-}(q_i)})$, i.e., they arise from the lower level branches at the nodes~$q_i$,
and the permutation~$\pi$ rearranges the residues in~$T_j$ such that their slopes are in some monotone order. Then we add or remove parallelogram neighborhoods of
size $t^{k}T_j$ around the residue slits on the open flat surfaces~$(Y_j^0, \eta_{Y_j^0})$, where $Y_j^0$ is obtained by removing discs of radius $t^{k-1/2}\delta $ around all~$q_i^+$ from $(Y_j, \eta_{Y_j})$ with the induced flat metric. As a result, we obtain a continuous family of open flat surfaces
$(X^{0}_t,\omega^{0}_t)$ over $(0,\varepsilon)$, where the removed discs are arbitrarily small as $t\to 0$.
\par
Next we scale the subsurfaces $(Z_j, \eta_{Z_j})$ of lower level
as $(Z_j, t^{k}\eta_{Z_j})$ and present them using Lemma~\ref{le:infzipp}
for any $\theta$ satisfying the conclusion of this lemma.
For $t$ small enough, we can assume that the union of
{\em inscribed half-discs} of radius~$t^{k-1/2}$
centered at the common limit point as $t \to 0$ of all broken
line segments
contains all finite boundary segments in the basic
domain presentation for each pole $q_i^-$ in $(Z_j, t^{k}\eta_{Z_j})$.
We then glue these inscribed half-discs into $(X^{0}_t,\omega^{0}_t)$, by identifying the boundary annuli $A_{i}^{-}$ of inner radius $t^{k-1/2} \delta$
and outer radius $t^{k-1/2}$ with the boundary annuli $A_{i}^{+}$ of the same size (modified by the residue slit construction) for all $q_i$. We have thus finished the construction for the case of two levels.
\par
Now for general $\overline{\Gamma}$, the induction step follows from the same idea. Let $\ell(\cdot)$ be any integral valued level function on $\Gamma$
that gives the full order $\overline{\Gamma}$, mapping the top level to zero.
Suppose that we have inductively constructed families of flat
surfaces~$(Y_j(t),\omega_j(t))$ for each of the connected components
of~$\overline{\Gamma}_{>L}$ for some level~$L$. For given index~$j$, let
$q_1,\ldots,q_N$ be the nodes joining $Y_j$ to level~$L$. If $Y_j$ does not contain
any marked point as a prescribed pole, then the corresponding tuple~$T_j$ of residues at all $q_i^-$ has sum equal to zero
by the global residue condition. Here is the only place where the global residue condition makes a difference
if $Y_j$ contains a marked pole. In that case, there is
no global residue condition imposed on it, but we can leverage on the flat geometric presentation of the marked pole to create a
residue $r_0$ such that $r_0 = -\sum_{i=1}^N r_N$. Then the construction
as in Figure~\ref{cap:ExTop} still goes through by taking the residue tuple $T_j = (r_0, r_1, \ldots, r_N)$.
\par
Next we construct a residue slit in the family $(Y_j(t),\omega_j(t))$ as in
Lemma~\ref{le:residueslits}, which has been stated for a single flat surface.
A closer look at the proof of that lemma reveals that the changes of
complex structures happen in some neighborhoods of the residue slits
constructed in the previous inductive steps. Since these finitely many
contractible neighborhoods are disjoint from the points $q_1^+,\ldots,q_N^+$ on $Y_j(t)$, the method of the proof still applies in families.
Finally we scale the subsurfaces $(Z_v, \eta|_{Z_v})$ with level $\ell(v)=L$ as $(Z_v, t^{-L}\eta_{Z_v})$, present
them as in Lemma~\ref{le:infzipp}, remove disks of radius $t^{-L-1/2}\delta $ around $q_i^{+}$ from $Y_j(t)$,
and glue in the inscribed half-disks of radius $t^{-L-1/2}\delta$ in $(Z_v, t^{-L}\eta_{Z_v})$ along the boundary annuli of inner radius~$t^{-L-1/2}\delta$
and outer radius~$t^{-L-1/2}$ (modified by the residue slits of size $t^{-L}T_j$ in $Y_j(t)$). The term $1/2$ in the exponents of $t$ ensures that the annuli
are expanding on the original surface $(Z_v, \eta_{Z_v})$ and shrinking on $(Y_j(t), \omega_j(t))$ as $t\to 0$.
This thus completes the construction for the general case. As a result, we have constructed a family of flat surfaces
$(X_t, \omega_t)$, which is continuous over $(0,\varepsilon)$ and lies in the given stratum of type $\mu$.
\par
\begin{rem}
The flat geometric construction is over a real ray, i.e., the base parameter~$t$ is real. It is natural to ask whether one can similarly construct such a family that varies holomorphically over a punctured disk. We point out in general there is a \emph{monodromy} obstruction. For instance, recall the construction by gluing Figure~\ref{cap:ExBottom} into Figure~\ref{cap:ExTop} to form a nearby flat surface in the resulting family. If $t$ varies as a complex parameter, it means that the residue cuts $r_i$ have to vary their arguments besides shrinking the lengths. If the corresponding cycle of $r_i$ is not homologous to zero, when its argument turns back, the Picard-Lefschetz formula implies that it affects those periods that have nonzero intersection with $r_i$, e.g., the periods arising from the boundary of the residue slit in Figure~\ref{cap:ExTop} cannot remain unchanged during the entire residue turning process. Consequently one has to vary holomorphically those affected periods to cancel out the excess periods caused by monodromy, which is less visible in terms of flat geometric coordinates. This phenomenon is closely related to extendability of period coordinates to the boundary of the strata compactification. In~\cite{BCGGM2} we will study this question systematically.
\end{rem}
\par
\begin{rem}
The flat geometric construction of gluing higher
order poles along boundary annuli with residue slits is analogous to higher order plumbing in Theorem~\ref{thm:plumbgeneral}.
In particular, the residue slit construction plays a similar role as the modification differential in the plumbing construction.
However, even over a real ray the two constructions in general do not give the same families if the residues are nonzero.
To see this, note that the residue slit construction involves various
choices, and so does the plumbing construction, especially when invoking Lemma~\ref{lm:merge} to merge the zeros that were dispersed. Hence they can agree only if we make very specific choices in the respective constructions.
\end{rem}
\par

\subsection{Flat geometric proof of Theorem~\ref{thm:main}:
conditions are sufficient}

So far we have constructed a family of flat surfaces $(X_t, \omega_t)$ that vary continuously
with $t\in (0,\varepsilon)$ sufficiently small. The remaining step
is to show that it converges as $t \to 0$ to the pointed stable differential
we started with. For this purpose we need a certain topology on $\barmoduli[g,n]$
in which we can verify convergence of the family.
\par
Let $(X, Z)$ be a pointed stable curve, where $Z = \{z_1, \ldots, z_n\}$ is the set of marked points on $X$,
and let $X'$ denote $X$ minus its nodes, so $X'$ is a disjoint union of irreducible components of $X$, punctured at the nodes.
An \emph{exhaustion} of $X$ is a sequence of subsets $\cdots K_{m-1}\subset K_m \subset K_{m+1}\cdots $ of $X'$, whose union is $X'$.  A sequence of pointed stable curves $(X_m, Z_m)$ in $\barmoduli[g,n]$ converges to $(X, Z)$ in the
\emph{quasiconformal topology}, if for some exhaustion $\{ K_m\}$ of $X$ there exists a
sequence of maps $f_m\colon K_m \to X_m$ that are quasiconformal onto the images,
such that $f_m$ respects the marked points and such that the \emph{dilatation} of $f_m$ tends to $1$.
The quasiconformal topology was introduced by Abikoff (\cite{Abikoff}) in
the setting of the augmented Teichm\"uller space. As $\barmoduli[g,n]$ is
a complex projective variety, it also inherits a standard topology. Hubbard and Koch (\cite{HubKoch}) established that these two topologies
are equivalent.
\par
For each node $q$ of the stable curve~$X$, let $D^+_{q}(t)$ be the closed disc of radius $t^{\ell(v^{+}(q))-\ell(v^{-}(q))-1/2}$ around $q^+$ on the higher level branch $(X_{v^+(q)}, \eta_{v^{+}(q)})$ of $q$ (modified in the neighborhood of the residue slit). In the gluing construction of $X_t$, the boundary of $D^+_{q}(t)$ becomes a (modified) circle of radius $t^{-1/2}$ on the lower level branch $(X_{v^-(q)}, \eta_{v^{-}(q)})$, which separates the basic domain presentation of the pole $q^-$ into two regions.
Let $D^{-}_{q}(t)$ be the closure of the region that contains $q^-$.
Denote by $\gamma_q(t)$ the common boundary curve of $D^{\pm}_{q}(t)$ in $X_t$ after the gluing construction.
Denote by $D^{\pm}(t)$ the union of $D^{\pm}_{q}(t)$ and by $\gamma(t)$ the union of $\gamma_q(t)$ over all nodes $q$.
\par
Let $K_t\coloneqq X \setminus D^{\pm}(t)$.
Then $\{ K_t \}$ is an exhaustion of $X$ as $t\to 0$. Moreover, $K_t$ looks just alike $X_t\setminus \gamma(t)$, and the only difference is that $X_t$ is further modified around the neighborhoods of the residue slits outside $D^+(t)$. This difference can be measured by the following result, which is the main step towards showing that the family of flat surfaces we have constructed actually converges to the prescribed limit object.
\par
\begin{lm}\label{lm:quasiconformal}
As $t \to 0$, there exists a quasiconformal map
$f_t \colon K_t \to X_t \setminus \gamma(t) $, respecting the marked points $z_i(t)$,
such that the dilatation of $f_t$ tends to~$1$.
\end{lm}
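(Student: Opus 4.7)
The plan is to construct $f_t$ piecewise on a decomposition of $K_t$ into a conformal ``bulk'' and finitely many thin neighborhoods of the residue slits, on which $f_t$ will be an explicit piecewise-affine map with controllable dilatation. Fix a small constant $W>0$ independent of $t$, and let $B_t \subset K_t$ be the complement of tubular neighborhoods of half-width $W$ around every straight segment of every residue slit, together with small disks of radius $W$ around each central polygon $P(T,\pi)$, each breaking point of a broken line, and each slit endpoint lying on some $\partial D_q^+(t)$. On $B_t$ the map $f_t$ is defined via flat coordinates: on a top-level component $Y_v$ it is the identity in the flat coordinates of $\eta_{Y_v}$ and $\omega_t$, while on a component $Z_v$ of level $\ell(v)<0$ it is multiplication by the positive real scalar $t^{-\ell(v)}$ in the flat coordinates of $\eta_{Z_v}$ and $t^{-\ell(v)}\eta_{Z_v}$. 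Both are conformal (pointwise dilatation $\equiv 1$), and all marked points lie in $B_t$ by choice of $W$, hence are sent to their counterparts in $X_t$ correctly.

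On a tubular strip of half-width $W$ around a straight slit segment $b_{ij}$ with residue $r_i$, set $\epsilon_t\coloneqq t^{-\ell(v^+(q_i))}|r_i|$ and choose flat rectangular coordinates $(x,y) \in [0, L_{ij}]\times[-W, W]$ with $b_{ij} = \{y=0\}$ and the $y$-axis along $r_i$. In the case $\langle\theta(b_{ij}), r_i\rangle > 0$, where a parallelogram $[0, L_{ij}]\times[-\epsilon_t/2, \epsilon_t/2]$ is removed in $X_t$ and its long edges identified by translation, define for $y\ne 0$
\begin{equation*}
   f_t(x,y)\,\coloneqq\,\Bigl(x,\,\bigl(1-\tfrac{\epsilon_t}{2W}\bigr)\,y\,+\,\operatorname{sgn}(y)\cdot\tfrac{\epsilon_t}{2}\Bigr),
\end{equation*}
extending to the slit $\{y=0\}$ via the identification $(x,\epsilon_t/2)\sim(x,-\epsilon_t/2)$ in $X_t$. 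This $f_t$ is the identity on the outer boundary $\{y=\pm W\}$ and has constant pointwise dilatation $1/(1-\epsilon_t/(2W))=1+O(\epsilon_t/W)$. The opposite-sign case, where a parallelogram is added instead, is handled by the analogous formula with dilatation $1 + O(\epsilon_t/W)$.

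The main obstacle is building the analogous local model maps near the non-straight features -- the central polygon $P(T,\pi)$, the breaking points of the broken lines, and the slit endpoints on $\partial D_q^+(t)$ -- with the dilatation estimates $1 + O(\epsilon_t/W)$. At each such location the modification is confined to a region of diameter $O(\epsilon_t)$, so on a disk of radius $W$ surrounding it one can define an explicit piecewise-affine map absorbing the modification while being the identity on the boundary circle; the estimate follows either by explicit calculation on each combinatorial type or abstractly by a compactness argument on quasiconformal homeomorphisms of the disk with prescribed boundary values and identifications. A secondary task is checking that the local pieces assemble into a single homeomorphism $f_t\colon K_t \to X_t\setminus\gamma(t)$, which requires the translation shifts to be compatible at every interface: this follows from the matching-residues condition~(2) and the global residue condition~(4), both built into the residue slit construction. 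Taking essential suprema then yields $K(f_t) \leq 1 + C\,\max_{i,j}\epsilon_t/W$ for a universal constant $C$; since $W$ is fixed and every $\epsilon_t\to 0$ as $t\to 0$, this bound tends to $1$, completing the proof.
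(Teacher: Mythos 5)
Your proposal follows the same strategy as the paper -- piecewise-affine stretching maps near the residue slits, conformal (identity or scaling) maps elsewhere, and a dilatation estimate by the ratio of slit width to neighborhood width -- but there are two problems, one of which is fatal as written.

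The critical error is the formula $\epsilon_t = t^{-\ell(v^+(q_i))}|r_i|$. The residue slit attached to the node $q_i$ sits on the \emph{higher}-level component $X_v$, $v = v^+(q_i)$, and by the construction in Section~\ref{sec:ConDegFam} the parallelogram neighborhood modified there has holonomy $t^{-L}r_i$ measured in the rescaled metric $t^{-\ell(v)}\eta_v = \omega_t|_{X_v}$, where $L = \ell(v^-(q_i))$ is the level of the subsurface glued in at $q_i$. In the $\eta_v$-coordinates you are using on $K_t$ (these must be $\eta_v$-coordinates, since your tube width $W$ is $t$-independent and $L_{ij}$ is the fixed length of $b_{ij}$), this is $\epsilon_t = t^{\ell(v)-L}|r_i|$, which tends to $0$ because $\ell(v) > L$. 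Your expression $t^{-\ell(v^+(q_i))}|r_i|$ instead equals $|r_i|$ whenever $v$ is a top-level component ($\ell(v)=0$), so your bound $K(f_t)\leq 1 + C\epsilon_t/W$ would stay bounded away from $1$ and the proof would fail exactly where it most needs to succeed. Note the paper's estimate $1 - t^{1/2}r_i/(2\rho)$ comes from a \emph{shrinking} neighborhood of half-width $\rho\,t^{\ell(v)-L-1/2}$, giving the ratio $(\text{slit width})/(\text{tube width}) \sim t^{1/2}$; with your fixed $W$ the ratio is the (corrected) $\epsilon_t/W \sim t^{\ell(v)-L}$, even smaller.

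The second, softer issue is the choice of fixed half-width $W$ for the tubes and disks. Away from the nodes this is fine and arguably simpler than the paper's shrinking tubes, but the model region at a slit endpoint on $\partial D^+_{q_i}(t)$ is not a disk on which a modification of diameter $O(\epsilon_t)$ is absorbed: it is $(\text{disk of radius }W)\setminus D^+_{q_i}(t)$, an annular region whose inner radius $t^{\ell(v)-L-1/2}\delta$ shrinks to $0$ and whose inner boundary wraps once around the cone singularity at $q_i^+$. The scale of the inner boundary $\sim t^{\ell(v)-L-1/2}$ is much larger than $\epsilon_t\sim t^{\ell(v)-L}$, so the modification is not confined to a shrinking neighborhood inside a fixed disk, and a compactness argument for quasiconformal maps of the disk does not directly apply to this degenerating family of domains. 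The paper avoids this by taking the slit-tube half-width $\rho\, t^{\ell(v)-L-1/2}$ and the disk radius $\delta\, t^{\ell(v)-L-1/2}$ to shrink at the same rate (with $\rho<\delta$), so that the local geometry near each node is uniformly comparable as $t\to 0$ and the affine stretch plus ``small perturbation near $\partial D^+_{q_i}(t)$'' makes sense. To make your fixed-$W$ version rigorous you would need to either switch to shrinking tubes near the endpoints or explicitly construct a uniformly quasiconformal family of maps on these degenerating annular domains compatible with the bulk scaling on both boundary components.
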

\par
For a simple example, consider a two-level surface with a torus on the top level joined to the lower level at
two nodes~$q_1$ and~$q_2$. Suppose the value of the lower level is $-1$ and the direction of the residues $\pm r$ at the two nodes
is vertical. Then the map~$f_t$ is given in Figure~\ref{cap:QCmapsft}.
Only the top level surface is depicted, since there is no residue slit on the
lower level. The residue slit in this case is just a straight
line joining~$q^+_1$ to~$q^+_2$, presented as the dotted line in the picture.

\begin{figure}[htb]
\begin{tikzpicture}
  \begin{scope}[scale=1.0]

    \fill[color=black!10!] (0,0) rectangle (5.5,3.6);
\draw(1.3,1.8) circle (0.8);
\fill[ color=white] (1.3,1.8) circle (0.8);
\draw(4.2,1.8) circle (0.8);
\fill[ color=white] (4.2,1.8) circle (0.8);
  \end{scope}

    \draw (0,0) -- (5.5,0) -- (5.5,3.6) -- (0,3.6) -- cycle;
    \draw (1.25,1.2) -- (4.25,1.2) -- (4.25,2.4) -- (1.25,2.4) -- cycle;
    \draw[thick,dotted] (1.25,1.8) -- (4.25,1.8);

        \draw[dashed] (2.75,1.8) -- (2.75,1.2);
        \draw[dashed] (2.75,1.8) -- (2.75,2.4);

  \begin{scope}
    \node at (1.3,2.76) {$D^{+}_{q_1}$};
     \end{scope}
  \begin{scope}[xshift=3cm]
    \node at (1.3,2.76) {$D^{+}_{q_2}$};
  \end{scope}
  \begin{scope}[xshift=1.6cm]
  \draw [decorate,decoration={brace}]
	(2.75,2.39) -- (2.75,1.81) node [midway, right] {$t^{1/2} \rho $};
\end{scope}
	
  \draw [->, bend angle=25, bend left]
        (5.75,1.8) to (7.25,1.8) node [midway, below] {};
  \node at (6.55,2.3) {$f_t$};
  \begin{scope}[scale=1.0,xshift=7.5cm]

\fill[color=black!10!] (0,0) rectangle (5.5,3.6);

\draw[ color=black] (1.3,1.8) circle (0.8);
\fill[ color=white] (1.3,1.8) circle (0.8);
\draw[ color=black] (4.2,1.8) circle (0.8);
\fill[ color=white] (4.2,1.8) circle (0.8);
\fill[color=white] (1.25,1.7) rectangle (4.25,1.9);
   \draw (0,0) -- (5.5,0) -- (5.5,3.6) -- (0,3.6) -- cycle;
    \draw (1.25,1.2) -- (4.25,1.2) -- (4.25,2.4) -- (1.25,2.4) -- cycle;
    \draw[thick,dotted] (1.25,1.8) -- (4.25,1.8);

    \draw (1.25,1.7) -- (4.25,1.7);
    \draw (1.25,1.9) -- (4.25,1.9);
    \draw [thick,dotted] (1.25,1.8) -- (4.25,1.8);

        \draw[dashed] (2.75,1.7) -- (2.75,1.2);
        \draw[dashed] (2.75,1.9) -- (2.75,2.4);

      \begin{scope}[xshift=2.1cm]
    \draw [decorate,decoration={brace}]
	  (2.2,2.39) -- (2.2,1.91) node [midway, right] {$t^{1/2}\rho -tr /2$};
	  \end{scope}	
  \end{scope}
\end{tikzpicture}
\caption{The quasiconformal map $f_t$ on the top level} \label{cap:QCmapsft}
\end{figure}
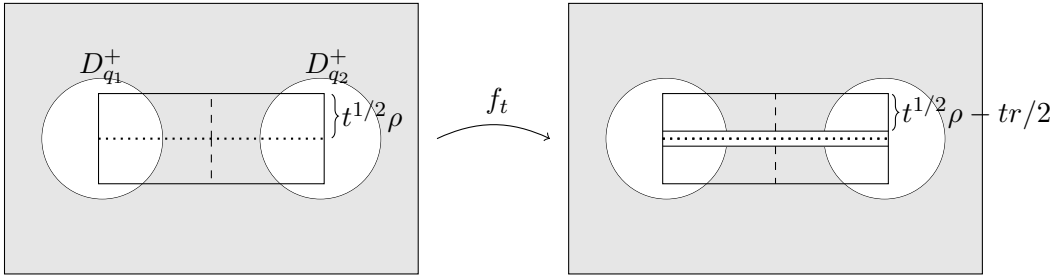
\par

The quasiconformal map $f_t$ in this case is the identity outside the union of the
two disks~$D^{+}_{q_1}(t)$, $D^{+}_{q_2}(t)$, and the rectangular region swept out by segments of holonomy
$t^{1/2} (2\rho )$, for some fixed $\rho < 1$, centered around the residue slit. We choose the scaling factor
$t^{1/2}$ because it is the radius of the disks $D^{+}_{q_i}(t)$, hence the rectangular region goes into the disks under the assumption on $\rho$.
Inside the rectangular region (and still outside of the disks), $f_t$ takes the upper (resp.\ lower) half
in the left picture to the upper (resp.\ lower) half in the right picture, by a family of
linear maps preserving vertical lines and shortening the height of the vertical segments by that of the removed residue slit neighborhood, with
a small perturbation near the boundary of $D^{+}_{q_{i}}(t)$. Note that $f_t$ is continuous along the dotted line on the left, because the upper and lower boundary segments of the residue slit neighborhood
are identified in the image surface~$X_t$ on the right. Moreover, since the vertical segments are shortened by at most $tr/2$, which becomes arbitrarily small compared to $t^{1/2}\rho$ as $t\to 0$, it implies that the dilatation of $f_t$ tends to $1$.
\par
\smallskip
\noindent\textit{Proof of Lemma~\ref{lm:quasiconformal}}.
The idea of the proof is similar to the example presented above. Let $b_{ij}$ be a broken line segment of the residue slit on the subsurface~$(X_v, \eta_v)$
that was constructed when gluing in a subsurface at level $L<\ell(v)$.
If $\langle \theta(b_{ij}), r_i \rangle > 0$, consider the neighborhood of~$b_{ij}$ swept out by
segments of holonomy $ t^{\ell(v)-L-1/2} (2\rho )$ centered around the residue slit. On the upper and lower parts of this neighborhood we
define the quasiconformal map~$f_t$ to be an
affine transformation onto the respective $( t^{\ell(v)-L-1/2} (\rho - t^{1/2}r_i/2))$-segments as illustrated in Figure~\ref{cap:QCmapsft}, and then
composed with scaling by~$t^{-\ell(v)}$, because $(X_v, \eta_v)$ is scaled by $t^{-\ell(v)}$ before gluing into $X_t$. If $\langle \theta(b_{ij}), r_i \rangle < 0$ we use the same composition, except that the image
of the affine transformation in the upper and lower parts will be swept out by the
$(t^{\ell(v)-L-1/2} (\rho + t^{1/2}r_i / 2) )$-segments. Since the ratio of $\rho$ and $\rho \pm t^{1/2} r_i / 2$ tends to $1$ as $t\to 0$, it follows that the dilatation of $f_t$
on these neighborhoods of the residue slit tends to $1$.
\par
Moreover, in the neighborhood of a corner where the residue
slit changes its direction from $\langle \theta(b_{ij}), r_i \rangle > 0$ to
$\langle \theta(b_{ij}), r_i\rangle< 0$, the quasiconformal map $f_t$ is
the identity composed with scaling by~$t^{-\ell(v)}$. To see this, e.g., take the angular sector between~$a_3$ and~$b$
in Figure~\ref{cap:ResSlitBack}, slide it up so that the sides~$a_3$ touch,
and glue in the additional parallelogram. One can easily find linear maps along segments
in the direction $r_i$ that interpolate continuously from the situation generically along
the broken line to the situation near such a corner.
\par

In the central polygon $P$, we extend the residue slit by connecting the middle points of the edges to the barycenter $B(P)$. We define the quasiconformal map $f_t$ in a neighborhood of this polygon by affine transformations that stretch along the lines joining the extended slit segments to the respective vertices of the polygon neighborhood
as in the figure to the right, and then composed with scaling by~$t^{-\ell(v)}$. Again, one can find a continuous family of linear maps along segments
whose direction varies from $r_i$ to $r_{i+1}$ near the central polygon. Such a map is sketched in Figure~\ref{fig:quasicentral}, where the dashed lines represent curves along which $f_{t}$ stretches.
\par

\begin{figure}[htb]
 \begin{tikzpicture}[description/.style={fill=white,inner sep=0pt},scale=0.75]
  \fill[ color=black!10!] (0,0) circle (2.5);

\begin{scope}[scale=.3]
  \coordinate[] (P1) at (-3,-1);
  \coordinate[] (P2) at (2,-1);

  \coordinate[] (P3) at (0.5,1.4);

  \coordinate (M1) at ($(P1)!.5!(P2)$);
    \coordinate (M2) at ($(P2)!.5!(P3)$);
  \coordinate (M3) at ($(P1)!.5!(P3)$);

  \coordinate[] (P4) at ($(0,0)!.5!(M1)$);
  \coordinate[] (P5) at ($(0,0)!.5!(M2)$);
  \coordinate[] (P6) at ($(0,0)!.5!(M3)$);

\begin{scope}[xshift=.5cm,yshift=1.4cm]
 \draw[rotate=125] (0,0) -- (0,-7.5) coordinate (F4);
 \draw[rotate=215] (0,0) -- (0,-7.5) coordinate (F5);
\end{scope}
\begin{scope}[xshift=2cm,yshift=-1cm]
 \draw (0,0) -- (0,-7.5) coordinate (F6);
 \draw[rotate=125] (0,0) -- (0,-7.5) coordinate (F7);
\end{scope}
\begin{scope}[xshift=-3cm,yshift=-1cm]
 \draw (0,0) -- (0,-7.5) coordinate (F8);
 \draw[rotate=215] (0,0) -- (0,-7.5) coordinate (F9);
\end{scope}

    \fill[color=white]
    (F5) -- (.5,1.4) --(F4)  -- (F7) -- (2,-1) -- (F6)-- (F8) -- (-3,-1) -- (F9) -- cycle;

 \draw (P1) -- (P2) -- (P3) -- (P1);
  \draw (0,0) -- (M1);
    \draw (0,0) -- (M2);
  \draw (0,0) -- (M3);

    \begin{scope}[shift=(M2)]
 \draw[rotate=125] (0,0) -- (0,-7.5) coordinate [pos=.35] (F1);
\end{scope}
\begin{scope}[shift=(M3)]
 \draw[rotate=215] (0,0) -- (0,-7.5) coordinate [pos=.3] (F2);
\end{scope}
\begin{scope}[shift=(M1)]
 \draw[] (0,0) -- (0,-7.5) coordinate [pos=.3] (F3);
\end{scope}
\end{scope}

\begin{scope}[xshift=.45cm,yshift=1.25cm]
 \draw[rotate=125] (0,0) -- (0,-1.5) coordinate [pos=.2](F4)coordinate [pos=.35](G4)coordinate [pos=.62](H4);
 \draw[rotate=215] (0,0) -- (0,-1.5) coordinate [pos=.2](F5)coordinate [pos=.35](G5)coordinate [pos=.62](H5);
\end{scope}
\begin{scope}[xshift=1.45cm,yshift=-.45cm]
 \draw (0,0) -- (0,-1.5) coordinate [pos=.2](F6)coordinate [pos=.4](G6)coordinate [pos=.7](H6);
 \draw[rotate=125] (0,0) -- (0,-1.5) coordinate [pos=.2](F7)coordinate [pos=.4](G7)coordinate [pos=.7](H7);
\end{scope}
\begin{scope}[xshift=-1.7cm,yshift=-.5cm]
 \draw (0,0) -- (0,-1.5) coordinate [pos=.2](F8)coordinate [pos=.4](G8)coordinate [pos=.7](H8);
 \draw[rotate=215] (0,0) -- (0,-1.5) coordinate [pos=.2](F9)coordinate [pos=.4](G9)coordinate [pos=.7](H9);
\end{scope}

\draw[dashed] (P3) -- (.45,1.25);
\draw[dashed] (P2) -- (1.45,-.45);
\draw[dashed] (P1) -- (-1.7,-.5);
\draw[dashed] (P1) -- (0,0);
\draw[dashed] (P2) -- (0,0);
\draw[dashed] (P3) -- (0,0);

\draw[dashed] (M1) -- (F8);
\draw[dashed] (M1) -- (F6);
\draw[dashed] (M2) -- (F4);
\draw[dashed] (M2) -- (F7);
\draw[dashed] (M3) -- (F5);
\draw[dashed] (M3) -- (F9);

\draw[dashed] (F1) -- (G4);
\draw[dashed] (F1) -- (G7);
\draw[dashed] (F2) -- (G5);
\draw[dashed] (F2) -- (G9);
\draw[dashed] (F3) -- (G6);
\draw[dashed] (F3) -- (G8);

\draw[dashed] (H4) -- (H7);
\draw[dashed] (H5) -- (H9);
\draw[dashed] (H6) -- (H8);
\end{tikzpicture}
\caption{The quasiconformal map $f_{t}$ nearby the central polygon}\label{fig:quasicentral}
\end{figure}
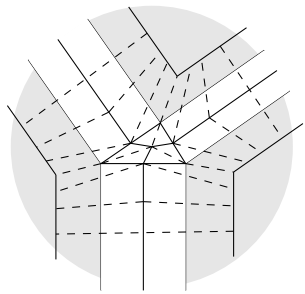

On the complement of these neighborhoods of the residue slits in each component $(X_v, \eta_v)$,
we take~$f_t$ to be the scaling map by $t^{-\ell(v)}$, which glues continuously to the quasiconformal map on the neighborhoods constructed above.
\par
It is clear from the construction, as explained above, that the dilatation
of~$f_t$ depends continuously on~$t$ and tends to $1$ as $t\to 0$.
\qed
\par
\smallskip
Lemma~\ref{lm:quasiconformal} proves the desired convergence in $\barmoduli[g,n]$. Nevertheless, it does not quite imply convergence of
pointed stable differentials in the incidence variety compactification, as explained in Remark~\ref{rem:noninj}.
In order to show that $(X_t, \omega_t)$ converges to $\eta_v$ projectively
on each component $X_v$ of $X$, it suffices to show that the location
and multiplicity of all zeros and poles agree. This is obvious for
all zeros and poles of~$\eta$ in the smooth locus of~$X$. It remains to check the vanishing orders of $\eta$ at the
nodes of~$X$ (compared to Example~\ref{exa:etanotunique}). At each node $q$, the vanishing order can be detected
by the index of a path centered around~$q^+$ under the flat metric in the higher level branch of $q$.
Since this quantity is defined
in a neighborhood of $q^+$, untouched in the construction for
$t$ small enough, the vanishing orders of the limit of
$\omega_t$ as $t \to 0$ are equal to those of $\eta$ at every node.
\par
To show that $(X_t, \omega_t)$ converges not only projectively, but also with the given scales at top level, it suffices to compare
ratios of the length of a nonzero (relative or absolute) period in each top
level component. This is obvious for a top level component of positive genus
or with more than one marked zero. In the remaining cases, such a component has at least two nodes $q_1$ and $q_2$ joining
it to lower level. A path on $X_t$ joining a boundary point
on each annulus $A_i^+$ around $q_i$ (that we used for gluing $X_t$) converges
to a path joining $q_1$ to $q_2$, and hence can be used for comparison
of relative size.
\qed

\end{document}